\documentclass{article}
\headsep 0pt \topmargin 0pt \oddsidemargin 0pt \evensidemargin 0pt
\textheight 21.5 true cm \textwidth 15 true cm
\parindent 20pt
\parskip 1pt
\usepackage{latexsym,amsfonts,euscript,amsmath,graphicx}
\usepackage{amssymb}
\usepackage{amsthm}
\usepackage{stmaryrd}
\usepackage{mathrsfs}
\usepackage{leftidx}
\usepackage{indentfirst}
\usepackage{color}

\usepackage[colorlinks,
linkcolor=blue,
anchorcolor=blue,
citecolor=blue]{hyperref} 


\makeatletter 
\@addtoreset{equation}{section}
\makeatother  

\def\la{\langle}\def\ra{\rangle}
\def\N{\mathbb N}

\newcommand{\Aut}{{\operatorname{Aut}}}

\newcommand{\rank}{{\operatorname{rank}}}
\newcommand{\class}{{\operatorname{c}}}

\def\irr#1{{\rm Irr}(#1)}
\def\cent#1#2{{\bf C}_{#1}(#2)}
\def\syl#1#2{{\rm Syl}_#1(#2)}
\def\norm#1#2{{\bf N}_{#1}(#2)}
\def\oh#1#2{{\bf O}_{#1}(#2)}

\def\fitt#1{{\bf F}(#1)}
\def\z#1{{\bf Z}(#1)}

\def\nor{\trianglelefteq}

\def\V#1{{\rm V}(#1)}
\def\N#1{{\rm N}(#1)}
\def\P#1{{\rm P}_{\rm v}(#1)}
\def\o#1{\overline{#1}}

\newtheorem*{thmA}{\bf Theorem A}
\newtheorem{lem}{ \bf Lemma}[section]

\newtheorem{pro}[lem]{\bf Proposition}
\newtheorem{thm}[lem]{\bf Theorem}
\newtheorem{rem}[lem]{\bf Remark}

\newtheorem{hy}[lem]{\bf Setting}

\title{On the  proportion  of vanishing elements in finite groups
\thanks{{\bf Acknowledgement:} The first and second authors gratefully acknowledge the support of China Scholarship Council (CSC). The third author was partially supported by INdAM-GNSAGA.}
}

\author{ Yu Zeng\footnote{Department of  Mathematics, Changshu Institute of Technology, Changshu, China. email: yuzeng2004@163.com},
   Dongfang Yang\footnote{School of Mathematics and Statistics, Southwest University, Chongqing, China. email: dfyang1228@163.com}
  and Silvio Dolfi\footnote{Dipartimento di Matematica e Informatica U. Dini, Universit\`a di Firenze, Italy. email: silvio.dolfi@unifi.it} }

\date{}

\begin{document}
\maketitle

\vskip 1cm

\begin{abstract}
  We prove that the function $\P G$, measuring the proportion of the elements of a finite group $G$ that are zeros
  of irreducible characters of $G$, takes very sparse values in a large segment of the $[0,1]$ interval. 
\end{abstract}

\vskip 5cm

\bigskip

\textbf{Keywords}\, Character theory, zeros of irreducible characters.

\textbf{2020 MR Subject Classification}\,\, 20C15. 
 \pagebreak

\section{Introduction}
We say that an element $g$ of a finite group $G$ is a \emph{vanishing} element of $G$ if there exists an irreducible character $\chi$ of $G$ such that
$\chi(g) = 0$, and we denote by $\V G$ the set of the vanishing elements of $G$.

A natural way to measure the relative abundance (or scarcity) of vanishing elements in a finite group $G$ is to consider the proportion
$$\P G = \frac{|\V G|}{|G|}$$
which can  be seen as the probability that a randomly chosen element of  $G$ turns out to be a vanishing element.
In this setting, one can ask what are the possible values of the function $\P G$, where $G$ varies in some relevant class of finite groups, and also what
implications on the structure of the group $G$ can be deduced from specific values of $\P G$.
For  example, as a consequence of a classical result by W. Burnside, $G$ is abelian if and
only if $\P G = 0$.

Recently, A. Moret\'o and P.H. Tiep  proved in~\cite{MT}, confirming a conjecture proposed in~\cite{DPS},  that if $G$ is a finite group such that $\P G < \P{A_7} = \mathfrak{a} =  \frac{1067}{1260} \simeq 0.846$, then $G$ is solvable.

In~\cite{MTV}, L. Morotti and H. Tong-Viet  showed that a finite group $G$ is abelian if and only if $\P G < \frac{1}{2}$, and
that $\P G = \frac{1}{2}$ if and only if $G/\z G$ is a Frobenius group with Frobenius complement of order $2$.
They also proved (\cite[Theorem 1.6]{MTV}) that if $\P G \leq \frac{2}{3}$, then
$\P G \in \{0,  \frac{1}{2}, \frac{2}{3} \}$ and observed that computer calculation seems to suggest that the only values
of $\P G$ in the interval $[0, \mathfrak{a})$ are of the form $\frac{m-1}{m}$ for some integer $1 \leq m \leq 6$
(and these values are actually taken: see Remark~\ref{examples}).
In this paper, we confirm this conjecture. 
\begin{thmA}
  Let $G$ be a finite group such that $\P G <  \mathfrak{a}$. Then
  $$\P G =  \frac{m-1}{m}$$
  for some  integer $1 \leq m \leq 6$. 
\end{thmA}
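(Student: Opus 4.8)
The plan is to combine the two cited results with an induction on $|G|$. By the Moret\'o--Tiep theorem the hypothesis $\P G < \mathfrak a = \P{A_7}$ forces $G$ to be solvable, and by \cite[Theorem~1.6]{MTV} we may assume $\tfrac23 < \P G < \mathfrak a$, the goal being to prove $\P G \in \{\tfrac34, \tfrac45, \tfrac56\}$. It is convenient to work with the non-vanishing set $\mathrm{nv}(G) = G \setminus \V G$, recording that $\V G$ is a union of conjugacy classes and of cosets of $\z G$, that $\z G \subseteq \mathrm{nv}(G)$, and that inflation of irreducible characters yields $\P G \ge \P{G/N}$ for every $N \trianglelefteq G$. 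Arguing by induction on $|G|$, for each $1 \neq N \trianglelefteq G$ the quotient satisfies $\P{G/N} \le \P G < \mathfrak a$, whence $\P{G/N} \in \{0,\tfrac12,\tfrac23,\tfrac34,\tfrac45,\tfrac56\}$; this constrains the isomorphism type of $G/N$ rather tightly for well-chosen $N$, and reduces the problem to controlling the extra vanishing produced by the irreducible characters of $G$ that are faithful on a suitably chosen minimal normal subgroup.

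The core of the argument is a case division governed by a minimal normal subgroup $V$ of $G$ (elementary abelian, as $G$ is solvable) and by the action of $G/\cent{G}{V}$ on $V$. The principal case is the \emph{Frobenius-type} one, in which the structure forces a normal subgroup $N$ with $G/N$ a Frobenius group, or a $2$-Frobenius group; here $\P G$ is computed essentially by hand. If $G = K \rtimes H$ is a Frobenius group, then every element of $G \setminus K$ is a vanishing element of $G$, since for $1 \neq \psi \in \Irr(K)$ the character $\psi^G$ is irreducible of degree $|H|\psi(1)$ and vanishes off $K$; hence $\P G \ge (|H|-1)/|H|$, which forces $|H| \le 6$. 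It then remains to decide which elements of $K$ are vanishing: $k \in K$ is vanishing in $G$ exactly when $\sum_{h \in H}\psi^{h}(k) = 0$ for some $\psi \in \Irr(K)$, and one must show that this extra vanishing either does not occur or raises $\P G$ precisely to one of $\tfrac34,\tfrac45,\tfrac56$, so that no value falls into one of the open intervals $(\tfrac23,\tfrac34)$, $(\tfrac34,\tfrac45)$, $(\tfrac45,\tfrac56)$, $(\tfrac56,\mathfrak a)$. The small groups realizing these values --- among them the $2$-groups $D_8$, $Q_8$ and their relatives, which give $\tfrac34$ and are not Frobenius (so arise when $\fitt G = G$), Frobenius groups such as $F_{20}$ and $\mathrm{AGL}(1,7)$, and $S_4$ --- are treated directly, those with $\z G \neq 1$ separately, since for such groups passing to $G/\z G$ destroys too much information.

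In the complementary \emph{large-kernel} case --- $G/\cent{G}{V}$ has many orbits on $V$, or $\cent{G}{V}$ is large, or every minimal normal subgroup of $G$ lies in $\z G$ --- one leans on the inductive hypothesis applied to quotients. The tool is a comparison estimate bounding $\P G - \P{G/N}$ from above by means of Clifford theory over $V$: when $V$ sits low enough and the action on it is tame, each irreducible character of $G$ lying over a nontrivial $G$-orbit in $\Irr(V)$ still has a controlled vanishing footprint, so the extra vanishing is too small to move $\P G$ away from the value $\P{G/N} \in \{\tfrac34,\tfrac45,\tfrac56\}$ forced by induction (the values $\le \tfrac23$ being excluded by hypothesis). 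This part generalizes the reasoning used in \cite{MTV} for the values $0,\tfrac12,\tfrac23$.

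I expect the main obstacle to be the Frobenius-type case, and within it the exclusion of the intermediate values: one must determine exactly what proportion of the kernel $K$ becomes vanishing, which comes down to a careful count of the $G$-orbits and inertia subgroups in $\Irr(K)$ together with vanishing-sums-of-roots-of-unity arguments, and this has to be carried out through an exhaustive treatment of the small-order complements (orders $4$, $5$, $6$) and of the low-dimensional module situations --- precisely where several of the examples attaining $\tfrac34,\tfrac45,\tfrac56$ live.
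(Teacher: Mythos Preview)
The paper's route is different from yours and conceptually cleaner. Rather than an inductive computation of $\P G$ via a minimal-normal-subgroup dichotomy, it proves a single structural fact (Theorem~\ref{KeyThm}): if $\P G < \mathfrak a$ then $\N G$ is itself an abelian normal subgroup of $G$, whence Theorem~A is immediate since $\P G = 1 - 1/[G:\N G]$ and $[G:\N G] \le 6$. The work goes into showing that $\N G$ is a subgroup. First one shows $\N G \subseteq A$ for some abelian normal $A$ with $[G:A] \le 6$ (Proposition~\ref{NiF}, via Proposition~\ref{GN<4}); then, case by case on $[G:A]$, one proves that $\N G \cap A$ is a subgroup by a $p$-part reduction (Proposition~\ref{56connect}) that lets one replace $A$ by a $2$-group (a $3$-group when $[G:A]=3$). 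The genuinely hard case is $[G:A] = 6$ with $\cent A P = 1$ and nonabelian Sylow $2$-subgroup, handled by the long Section~3 analysis culminating in Proposition~\ref{AsubsetNG}.

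Your outline has two soft spots that would have to be filled before it becomes a proof. The ``Frobenius-type versus large-kernel'' dichotomy is not sharply formulated and does not obviously exhaust all cases: a nilpotent group with $[G:\z G] = 4$ is neither Frobenius nor handled by a minimal-normal-subgroup comparison, and once one stacks central factors on a Frobenius situation the case split blurs. More seriously, the ``comparison estimate bounding $\P G - \P{G/N}$ from above'' that you invoke in the large-kernel case is not an available lemma and is essentially the whole difficulty: characters of $G$ not inflated from $G/N$ can produce substantial extra vanishing inside the abelian normal subgroup, and ruling this out (under $\P G < \mathfrak a$) is exactly what the paper's Section~3 does through detailed vanishing-sum computations (Lemma~\ref{6sumsof2thunity} and Lemmas~\ref{XYtypeelementary}--\ref{Ytypevanishelement}). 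You correctly identify the order-$6$ complement as the crux and anticipate the need for roots-of-unity arguments, so the endpoint of your plan coincides with the paper's hard core; what your outline lacks is the organising principle --- the subgroup property of $\N G$ --- that turns a potentially open-ended case analysis into a finite one.
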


The sparseness of the values of the function ${\rm P}_{\rm v}$ in the interval $[0, \mathfrak{a})$ looks rather striking, when  compared to the following result by A. Miller (see~\cite{M}):
 defining the proportion ${\rm P}_{\rm ct}(G)$ of zeros in the character table $CT(G)$ of a finite group $G$ as
the ratio of the number of zeros in $CT(G)$ by the total number of entries of $CT(G)$, then the set of values ${\rm P}_{\rm ct}(G)$, when $G$ varies in the class of the finite groups, is dense in $[0,1]$.

The paper is organized as follows: in Section 2 and Section 3 we collect useful results and in  Section 4 
we prove that if $\P G < \mathfrak{a}$, then the set of the \emph{non-vanishing} elements $G - \V G$ is, rather surprisingly,  a \emph{subgroup}
of $G$ (Theorem~\ref{KeyThm}), a fact that easily yields Theorem~A. 

Our notation is standard and for character theory it  follows~\cite{isaacs1976}.
All groups considered in the paper will be assumed, if not otherwise mentioned,  to be finite groups.

\section{Preliminaries}
Let $G$ be a finite group. 
We write
$\V G = \{ g \in G \mid \chi(g) = 0 \text{ \ for  \ some \ } \chi \in \irr G\}$ and  we denote  by $\N G = G - \V G$ its complement
in $G$. As in~\cite{isaacs1999}, we call the elements of $\N G$ \emph{non-vanishing} elements of $G$.

A basic observation: given  a normal subgroup $N$ of a group $G$ and $g \in G$, if $gN \in \V{G/N}$ then, seeing by inflation the characters of $G/N$ as characters of $G$ whose kernel contains $N$, we have that 
$gN \subseteq \V G$. This implies that $\P{G/N} \leq \P G$, a fact that will be used freely in the rest of the paper.

We will use the following notation:
for a positive integer $m$, $\mathsf{U}_m = \la \zeta_m \ra =
\{ \varepsilon \in \mathbb{C} \mid \varepsilon^m = 1\}$ is the group of the (complex) $m$-th roots of unity, and $\zeta_m = e^{\frac{2\pi}{m}\sqrt{-1}}$ is a primitive $m$-th root of unity. We denote by $ \mathsf{U} = \bigcup_{m \geq 1} \mathsf{U}_m$ the infinite group of all roots of unity.

\begin{lem}[\cite{lam2000}]\label{vs}
  Let $m, n$ be a positive integers,  $ \alpha_1, \alpha_2, \ldots, \alpha_n \in \mathsf{U}_m$
  and   $p_1, p_2, \ldots,  p_r$ the distinct prime divisors  of $m$.
  Then $\alpha_1+\cdots+\alpha_n=0$  only if $n=n_1p_1+\cdots +n_rp_r$ for suitable integers $n_i\in \mathbb{N}$.
\end{lem}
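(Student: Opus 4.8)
My plan is to prove this by induction on $m$, in the spirit of Lam and Leung. First I would reformulate the statement: grouping equal summands, let $a_k$ be the number of indices $i$ with $\alpha_i = \zeta_m^k$ for $0 \le k \le m-1$, so that $a_k \in \mathbb{N}$, $\sum_k a_k = n$, and $\sum_{k=0}^{m-1} a_k \zeta_m^k = 0$; the claim is that $n$ lies in the numerical semigroup $\Sigma(m) = \mathbb{N}p_1 + \cdots + \mathbb{N}p_r$. The case $m = 1$ is vacuous (there is no nonempty vanishing sum of $1$st roots of unity), and when $m = p$ is prime the relation $1 + \zeta_p + \cdots + \zeta_p^{p-1} = 0$ together with the $\mathbb{Q}$-linear independence of $1, \zeta_p, \ldots, \zeta_p^{p-2}$ forces all the $a_k$ to coincide, so that $p \mid n$; this is the base case.

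For the inductive step I fix a prime $p \mid m$ and partition the multiset $\{\alpha_1, \ldots, \alpha_n\}$ along the cosets of the subgroup $\mathsf{U}_{m/p}$ of $\mathsf{U}_m$, writing each $\alpha_i = \zeta_m^{c_i}\gamma_i$ with $c_i \in \{0, \ldots, p-1\}$ and $\gamma_i \in \mathsf{U}_{m/p}$, and setting $S_c = \sum_{c_i = c}\gamma_i \in \mathbb{Z}[\zeta_{m/p}]$ and $n_c = \#\{i : c_i = c\}$, so that $\sum_{c=0}^{p-1}\zeta_m^c S_c = 0$ and $n = \sum_{c=0}^{p-1} n_c$. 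When $p^2 \mid m$ --- a case which also subsumes all prime powers --- one has $[\mathbb{Q}(\zeta_m):\mathbb{Q}(\zeta_{m/p})] = p$, and since $\zeta_m^p = \zeta_{m/p}$ the set $\{1, \zeta_m, \ldots, \zeta_m^{p-1}\}$ is a $\mathbb{Q}(\zeta_{m/p})$-basis of $\mathbb{Q}(\zeta_m)$; hence every $S_c = 0$, that is, each $S_c$ is a vanishing sum of $n_c$ elements of $\mathsf{U}_{m/p}$. Since $m/p < m$ and $m/p$ has the same prime divisors as $m$, induction gives $n_c \in \Sigma(m/p) = \Sigma(m)$ for each $c$, and therefore $n = \sum_c n_c \in \Sigma(m)$.

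The hard case is $m$ squarefree with $r \ge 2$. Here I would pick $p \mid m$ odd (possible, since a squarefree composite has an odd prime divisor) and use instead the internal decomposition $\mathsf{U}_m = \mathsf{U}_p \times \mathsf{U}_{m/p}$, writing $\alpha_i = \omega^{c_i}\delta_i$ with $\omega = \zeta_p$ and $\delta_i \in \mathsf{U}_{m/p}$. As $\mathbb{Q}(\zeta_p)$ and $\mathbb{Q}(\zeta_{m/p})$ are linearly disjoint over $\mathbb{Q}$ and the only $\mathbb{Q}$-linear relation (up to scalars) among $1, \omega, \ldots, \omega^{p-1}$ is $1 + \omega + \cdots + \omega^{p-1} = 0$, the partial sums $D_c = \sum_{c_i = c}\delta_i \in \mathbb{Z}[\zeta_{m/p}]$ are now only forced to be equal to one another, say all equal to some $D$, rather than to vanish. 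If $D = 0$ one concludes exactly as before, applying induction to each $n_c$. If $D \ne 0$, then every $n_c \ge 1$, so $n \ge p$; when $2 \mid m$ this already finishes the argument, since $\Sigma(m)$ then contains $\mathbb{N}\cdot 2 + \mathbb{N}\cdot p$ and hence every integer $\ge p - 1$. The genuinely delicate point --- which I expect to be the main obstacle, and which is the technical heart of the argument --- is the remaining sub-case: $m$ squarefree with all prime divisors odd, and $D \ne 0$. There I would exploit that for each pair $c \ne c'$ the identity $D_c - D_{c'} = 0$ exhibits a vanishing sum of $n_c + n_{c'}$ roots of unity in $\mathsf{U}_{\operatorname{lcm}(2, m/p)}$ (absorbing the sign into $-1 = \zeta_2$), a group of order strictly less than $m$ because $p$ is odd, so that induction constrains all the pairwise sums $n_c + n_{c'}$; combining this with the fact that a single fixed nonzero $D \in \mathbb{Z}[\zeta_{m/p}]$ is expressible as a sum of exactly $n_c$ roots of unity for every $c$ --- which makes $D$ highly constrained when some $n_c$ is small --- one runs a case analysis on the partitions $n = n_0 + \cdots + n_{p-1}$ to place $n$ inside $\Sigma(m)$. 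Once this bookkeeping is carried out, the theorem follows.
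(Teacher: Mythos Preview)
The paper does not prove this lemma; it is quoted from Lam--Leung \cite{lam2000} and used as a black box, so there is no proof in the paper to compare against. I will therefore assess your sketch on its own.

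Your base case, the reduction when $p^2\mid m$, and the treatment of the squarefree case with $D=0$ are all correct and standard. The sub-case $2\mid m$, $D\neq 0$ is also fine: $n\ge p$ together with $2,p\in\{p_1,\ldots,p_r\}$ indeed forces $n\in 2\mathbb{N}+p\mathbb{N}\subseteq\Sigma(m)$.

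The genuine gap is precisely where you flag it: $m$ squarefree with all prime divisors odd and $D\neq 0$. The tool you propose --- the pairwise constraints $n_c+n_{c'}\in\Sigma(2m/p)$ obtained from the vanishing sums $D_c-D_{c'}=0$ --- is by itself too weak. For instance, take $m=21$ and $p=3$: then $\Sigma(2m/p)=\Sigma(14)$ misses only $\{1,3,5\}$, so the pairwise constraint does not exclude the partition $(n_0,n_1,n_2)=(1,1,3)$, which would give $n=5\notin\Sigma(21)=3\mathbb{N}+7\mathbb{N}$. To rule this out one must use the stronger fact that all the $n_c$ lie in the single set $W_D=\{k: D\text{ is a sum of }k\text{ elements of }\mathsf{U}_{m/p}\}$ and then analyse the structure of $W_D$. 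When $m/p$ is prime this is easy (the kernel of $\mathbb{Z}[\mathsf{U}_{m/p}]\to\mathbb{Z}[\zeta_{m/p}]$ is cyclic, so $W_D$ is an arithmetic progression $k_0+(m/p)\mathbb{N}$ and $n=pk_0+(m/p)s\in\Sigma(m)$ follows at once), but when $m/p$ has several prime factors $W_D$ need not be a single progression, and controlling it is exactly the substantive content of the Lam--Leung argument. ``One runs a case analysis'' does not discharge this; as it stands, the proposal is incomplete at the decisive step.
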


\begin{lem}\label{msu}
Let $k$ be a positive integer and let  $\varepsilon_i \in \mathsf{U}$, for $i = 1,2, \ldots, k$, be  roots of unity
such that $\varepsilon_1 + \varepsilon_2 + \cdots + \varepsilon_k = 0$. Then, up to a suitable labelling of the
elements $\varepsilon_i$:
\begin{description}
\item[(a)] If $k \in \{ 2,3\}$, then there exists a $\delta \in \mathsf{U}$ such that 
  $\varepsilon_i = \delta \zeta_k^i$ for $i = 1,2, \ldots, k$.
  
  If $k= 4$, then $\varepsilon_2 = -\varepsilon_1$ and
  $\varepsilon_4 = -\varepsilon_3$. 
\item[(b)] If $k = 6$, then one of the following holds: 
  \begin{description}
  \item[(b1)] there exist $\delta_0, \delta_1, \delta_2 \in \mathsf{U}$ such that, 
writing $i-1 = 2q+r$, with $q,r$ integers,  $0 \leq r \leq 1$, 
$\varepsilon_i = \delta_q\zeta_2^r$ for  $i = 1,2, \ldots, 6$.   
  \item[(b2)] there exist $\delta_0, \delta_1 \in \mathsf{U}$ such that, 
writing $i-1 = 3q+r$, with $q,r$ integers,  $0 \leq r \leq 2$, 
$\varepsilon_i = \delta_q\zeta_3^r$ for  $i = 1,2, \ldots, 6$.   
  \item[(b3)]  there exists a $\delta \in \mathsf{U}$ such that  $\varepsilon_i = \delta\zeta_5^i$ for  $i = 1,2, 3, 4$, 
$\varepsilon_5 = \delta\zeta_2\zeta_3$ and $\varepsilon_6 = \delta\zeta_2\zeta_3^2$.   
\end{description}
\item[(c)] If $k = 6$ and all  $\varepsilon_i \in \mathsf{U}_{2^n}$, for some $n\geq 1$, then
  $\varepsilon_2 = -\varepsilon_1$, $\varepsilon_4 = -\varepsilon_3$ and $\varepsilon_6 = - \varepsilon_5$. 
\end{description}
\end{lem}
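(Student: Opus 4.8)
The plan is to reduce the whole statement to a classification of \emph{minimal} vanishing sums of roots of unity --- those $\varepsilon_{i_1}+\dots+\varepsilon_{i_t}=0$ with no vanishing proper nonempty subsum. Two base computations come first. A vanishing sum of length $2$ is an opposite pair $\varepsilon_2=-\varepsilon_1$, which is the shape $\varepsilon_i=\delta\zeta_2^i$ with $\delta=\varepsilon_2$. For length $3$, dividing $\varepsilon_1+\varepsilon_2+\varepsilon_3=0$ by $\varepsilon_3$ gives $1+\beta+\gamma=0$ with $|\beta|=|\gamma|=1$; then $2+2\operatorname{Re}(\beta)=|1+\beta|^2=|\gamma|^2=1$, so $\{\beta,\gamma\}=\{\zeta_3,\zeta_3^{-1}\}$ and, after relabelling, $\varepsilon_i=\delta\zeta_3^i$ with $\delta=\varepsilon_3$. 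Since a single root of unity is nonzero, every vanishing sum of length $2$ or $3$ is automatically minimal, so these two computations already give the cases $k\in\{2,3\}$ of (a).

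Next I would prove a modulus-reduction principle (essentially Mann's lemma): if $\varepsilon_1+\dots+\varepsilon_k=0$ is minimal and, after normalising $\varepsilon_1=1$, the $\varepsilon_i$ lie in $\mathsf{U}_m$ with $m$ as small as possible, then $m$ is squarefree and divisible only by primes $\le k$; in particular $m\mid 2\cdot 3\cdot 5$ when $k\le 6$. The proof is elementary: if $p^2\mid m$, grouping the $\varepsilon_i$ by their coset modulo $\mathsf{U}_{m/p}$ and using that $\{1,\zeta_m,\dots,\zeta_m^{p-1}\}$ is a $\mathbb{Q}(\zeta_{m/p})$-basis of $\mathbb{Q}(\zeta_m)$ breaks the relation into vanishing subsums, contradicting minimality of the relation or of $m$; if a prime $p\ge k+1$ divides $m$, the analogous grouping modulo $\mathsf{U}_{m/p}$ together with $\zeta_p^{p-1}=-(1+\zeta_p+\dots+\zeta_p^{p-2})$ forces all $p$ coset-sums to coincide, hence --- fewer than $p$ terms being available --- to all vanish, again a contradiction. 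Applied with $k=4$ this gives $m\mid 6$, and inspecting $m\in\{1,2,3,6\}$ by hand (Lemma~\ref{vs} kills $m=3$; closure of $\mathsf{U}_2$ and $\mathsf{U}_6$ under negation handles the rest) shows there is no minimal vanishing sum of length $4$. Hence a $4$-term vanishing sum has a vanishing proper subsum, necessarily of length $2$ (length $3$ would leave a vanishing single term), so it is a union of two opposite pairs --- the case $k=4$ of (a).

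Now let $k=6$. If the sum is not minimal, pick a vanishing proper subsum of least length $t\in\{2,3,4\}$; here $t=4$ is impossible, since such a subsum would itself be a union of two opposite pairs and so contain a shorter vanishing subsum. If $t=2$, the $4$-term complement of the opposite pair is a union of two opposite pairs by the previous step, giving three opposite pairs: case (b1). If $t=3$, then this subsum and its $3$-term complement are both rotated copies of $1+\zeta_3+\zeta_3^2$ by the length-$3$ analysis: case (b2). There remains the main case, a minimal $6$-term sum. By the reduction principle $m$ is a squarefree divisor of $30$. When $5\mid m$, writing the $\varepsilon_i$ via CRT over the primes dividing $m$ and grouping by the residue of the $\zeta_5$-exponent, the relation $\sum_i\zeta_5^{a_i}\mu_i=0$ together with the $\mathbb{Z}[\zeta_{m/5}]$-basis $\{1,\zeta_5,\zeta_5^2,\zeta_5^3\}$ of $\mathbb{Z}[\zeta_m]$ and $\zeta_5^4=-(1+\zeta_5+\zeta_5^2+\zeta_5^3)$ forces the five coset-sums $c_j=\sum_{a_i=j}\mu_i$ to equal a common $c\in\mathbb{Z}[\zeta_{m/5}]$; minimality excludes $c=0$, so (six terms over five cosets) exactly one coset contributes two terms and the other four contribute one each, all equal to $c$. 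Thus $c$ is simultaneously a single root of unity of $\mathsf{U}_{m/5}$ and a sum of two of them, which is impossible for $m/5\in\{1,2,3\}$; together with Lemma~\ref{vs} for $m=5$ and the same coset argument for $m\in\{2,3,6\}$, this leaves only $m=30$. There $c=\zeta_6^{b}=\zeta_6^{b'}+\zeta_6^{b''}$, and the length-$3$ result applied to $\zeta_6^{b'}+\zeta_6^{b''}+(-\zeta_6^{b})=0$ gives $\{\zeta_6^{b'},\zeta_6^{b''}\}=\{\zeta_6^{b-1},\zeta_6^{b+1}\}$; absorbing a scalar into $\delta$ and relabelling so that the doubled residue is $0$ and the others are $1,2,3,4$, and using $\zeta_6^{-1}=\zeta_2\zeta_3$ and $\zeta_6=\zeta_2\zeta_3^2$, one reads off exactly configuration (b3).

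Part (c) is then immediate: in case (b2) one has $\varepsilon_2\varepsilon_1^{-1}=\zeta_3\notin\mathsf{U}_{2^n}$ and in case (b3) one has $\varepsilon_2\varepsilon_1^{-1}=\zeta_5\notin\mathsf{U}_{2^n}$, so only (b1) survives, which is precisely the claim; alternatively (c) follows by induction on $n$, splitting the six $2^n$-th roots of unity by the parity of their exponents via the $\mathbb{Q}(\zeta_{2^{n-1}})$-basis $\{1,\zeta_{2^n}\}$ of $\mathbb{Q}(\zeta_{2^n})$. I expect the only real obstacle to be the minimal $6$-term case: bringing $m$ down to $30$ and then carrying out the CRT bookkeeping --- in particular keeping track of the scaling freedom and checking that what comes out is literally (b3) up to relabelling --- is where essentially all the work lies; everything else is either an elementary computation on the unit circle or routine combinatorics of subsums.
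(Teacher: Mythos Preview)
Your argument is correct and essentially reconstructs, in the specific cases $k\le 6$, the classification of short minimal vanishing sums of roots of unity due to Mann, Conway--Jones, and Poonen--Rubinstein. The paper takes a much shorter route: parts~(a) and~(b) are simply quoted from \cite[Theorem~3.1]{poonen1998}, and~(c) is deduced from~(b) exactly as in your first derivation, by observing that configurations~(b2) and~(b3) force ratios $\varepsilon_2\varepsilon_1^{-1}\in\{\zeta_3,\zeta_5\}$ that cannot lie in~$\mathsf{U}_{2^n}$.

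What your approach buys is self-containment: you avoid importing a result whose full statement treats vanishing sums up to length~$12$, and your modulus-reduction principle together with the explicit $\zeta_5$-coset bookkeeping gives a hands-on derivation of~(b3). The cost is length and the need to dispose of several small moduli individually; your treatment of $m\in\{2,3,6\}$ and of the impossibility of $c\in\mathsf{U}_{m/5}$ being simultaneously a single root of unity and a sum of two for $m/5\in\{1,2,3\}$ is compressed to the point of being a sketch, and a reader would want at least a line for each (for instance, the $m=6$ case requires noting that equal coset-sums $c_0=c_1=c_2\in\mathbb{Z}$ with six $\pm 1$ entries force either a pair $1+(-1)$ or a triple $1+\zeta_3+\zeta_3^2$ as a subsum). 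Once those checks are spelled out, your proof makes the lemma independent of the Poonen--Rubinstein reference, whereas the paper's citation is efficient but treats the result as a black box.
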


\begin{proof}
 (a) and (b) follow from~ \cite[Theorem 3.1]{poonen1998}, and (c) is a consequence of (b).
\end{proof}
\begin{lem}\label{6sumsof2thunity}
  Let $n$ be a positive integer and let  $\varepsilon_i,\eta_i\in \mathsf{U}_{2^n}$,  for $1\leq i\leq 3$, be  roots of unity such that $(*)$:
 $ \varepsilon_1\varepsilon_2\varepsilon_3 = 1$  and   $ \eta_1\eta_2\eta_3 =1$.

Then, writing $\Sigma=\varepsilon_1+\varepsilon_2+\varepsilon_3+\eta_1+\eta_2+\eta_3$
and $\Delta = \{ \delta_i = \o{\varepsilon_i}\eta_i \mid 1 \leq i \leq 3\}$.
\begin{description}
\item[(1)]
 If $\Delta \subseteq \mathsf{U}_2$,   then $\Sigma\neq 0$.

\item[(2)] Assume $n \leq 2$ and fix the notation so that   $o(\varepsilon_3)\leq o(\varepsilon_2)\leq o(\varepsilon_1)$
  and $o(\eta_3)\leq o(\eta_2)\leq o(\eta_1) \leq o(\varepsilon_1)$.
  Then $\Sigma=0$ if and only if $\{\varepsilon_1,\varepsilon_2\}=\{\zeta_4,-\zeta_4\}$ and $\eta_1=\eta_2=-1$.
  In particular, if $n\leq 2$ and $\eta_i=\overline{\varepsilon_i}$ for $i=1,2$, then $\Sigma\neq 0$.
%
  
\item[(3)] Assume $\Delta \subseteq \mathsf{U}_4$ and $o(\varepsilon_i) \geq 8$ for some $i \in \{1,2,3\}$.  
    If $\Sigma = 0$,  then
    either $\Delta = \{ \zeta_4, -1\}$ or $\Delta = \{ -\zeta_4, -1\}$.
\end{description}
  \end{lem}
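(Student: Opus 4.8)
The plan is to analyze the equation $\Sigma = 0$ using the structure theorem for vanishing sums of roots of unity (Lemma~\ref{msu}), exploiting that $\Sigma$ is a sum of $k = 6$ roots of unity lying in $\mathsf{U}_{2^n}$. Since the six summands $\varepsilon_1, \varepsilon_2, \varepsilon_3, \eta_1, \eta_2, \eta_3$ all lie in $\mathsf{U}_{2^n}$, part (c) of Lemma~\ref{msu} applies: after a suitable relabelling, the six terms split into three ``antipodal'' pairs summing to zero. So I would first enumerate how the six elements $\{\varepsilon_1,\varepsilon_2,\varepsilon_3,\eta_1,\eta_2,\eta_3\}$ can be partitioned into three pairs $\{x, -x\}$. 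The key constraint to bring in is $(*)$: $\varepsilon_1\varepsilon_2\varepsilon_3 = \eta_1\eta_2\eta_3 = 1$, which rigidly restricts which pairings are consistent.

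For part \textbf{(1)}: if $\Delta \subseteq \mathsf{U}_2$, then $\eta_i = \pm \varepsilon_i$ for each $i$. I would argue that the pairing forced by Lemma~\ref{msu}(c) cannot pair $\varepsilon_i$ with $\varepsilon_j$ or $\eta_i$ with $\eta_j$ for $i \neq j$ without contradicting $(*)$: for instance if $\varepsilon_2 = -\varepsilon_1$ then $\varepsilon_3 = (\varepsilon_1\varepsilon_2)^{-1} = -\varepsilon_1^{-2}$, and then $\varepsilon_3$ must be paired with some $\eta_j = \pm\varepsilon_j$, leading (via the $\eta$-product relation) to a contradiction — this is a short finite case check. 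The only surviving pairings pair each $\varepsilon_i$ with an $\eta_j$; combined with $\eta_i = \pm\varepsilon_i$ and the product relations, one deduces $\eta_i = -\varepsilon_i$ for all $i$, so $\delta_i = -1$ for all $i$, but then $(\delta_1\delta_2\delta_3) = (\eta_1\eta_2\eta_3)/(\varepsilon_1\varepsilon_2\varepsilon_3) = 1$ forces $(-1)^3 = 1$, a contradiction. Hence $\Sigma \neq 0$.

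For part \textbf{(2)}, where $n \leq 2$, the roots of unity involved lie in $\mathsf{U}_4 = \{1, -1, \zeta_4, -\zeta_4\}$, so this is a genuinely finite problem; I would either invoke Lemma~\ref{msu}(c) again or argue directly. With the ordering $o(\varepsilon_3) \leq o(\varepsilon_2) \leq o(\varepsilon_1)$ and $o(\eta_i)$'s bounded by $o(\varepsilon_1)$, a short analysis of the product relations $\varepsilon_1\varepsilon_2\varepsilon_3 = \eta_1\eta_2\eta_3 = 1$ over $\mathsf{U}_4$ pins down the possibilities: if all six lie in $\mathsf{U}_2$ the sum is a nonzero even integer, so some $\varepsilon_i$ or $\eta_i$ must be $\pm\zeta_4$; tracking the antipodal pairing then forces $\{\varepsilon_1, \varepsilon_2\} = \{\zeta_4, -\zeta_4\}$ (so $\varepsilon_3 = 1$) and $\eta_1 = \eta_2 = -1$ (so $\eta_3 = 1$), which indeed gives $\Sigma = \zeta_4 + (-\zeta_4) + 1 + (-1) + (-1) + 1 = 0$. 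The ``in particular'' clause is then immediate: if $\eta_i = \overline{\varepsilon_i}$ for $i = 1, 2$, then $\{\eta_1, \eta_2\} = \{\overline{\zeta_4}, \overline{-\zeta_4}\} = \{-\zeta_4, \zeta_4\} \neq \{-1, -1\}$, ruling out the sole vanishing configuration.

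For part \textbf{(3)}, where $\Delta \subseteq \mathsf{U}_4$ and $o(\varepsilon_i) \geq 8$ for some $i$, I would again apply Lemma~\ref{msu}(c) to get the three antipodal pairs. The hypothesis $o(\varepsilon_i) \geq 8$ means not all summands lie in $\mathsf{U}_4$, so at least one antipodal pair is $\{\varepsilon, -\varepsilon\}$ with $o(\varepsilon) \geq 8$; I would show such a pair must consist of one $\varepsilon_j$ and one $\eta_k$ (pairing two $\varepsilon$'s or two $\eta$'s among themselves contradicts the product relation $(*)$ as in part (1)), which forces $\delta$-values to be roots of unity of order $\geq 8$ unless the indices match — and since $\Delta \subseteq \mathsf{U}_4$, the high-order element must be paired with its partner of the same index, i.e. $\eta_j = -\varepsilon_j$ giving $\delta_j = -1$. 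Then I'd feed $\delta_j = -1$ and the product relation $\delta_1\delta_2\delta_3 = 1$ back in: this gives $\delta_a \delta_b = -1$ for the other two indices, and with $\delta_a, \delta_b \in \mathsf{U}_4$ the only solutions are $\{\delta_a, \delta_b\} = \{\zeta_4, \zeta_4\cdot(-1)\cdot\zeta_4^{-1}\}$-type configurations; careful bookkeeping of which also respect $\Sigma = 0$ narrows $\Delta$ down to $\{\zeta_4, -1\}$ or $\{-\zeta_4, -1\}$ (as sets, i.e. one of the $\delta$'s is $\pm\zeta_4$ and the remaining two multiply to $\mp\zeta_4^{-1}$, forcing them to be $-1$ and $\pm\zeta_4$). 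The main obstacle throughout is the combinatorial bookkeeping: Lemma~\ref{msu}(c) gives an \emph{unordered} partition into antipodal pairs, and one must carefully track, under every admissible pairing, the interaction with the two product constraints in $(*)$ and (for (2) and (3)) the order/membership constraints — there are several cases and it is easy to miss one, so I would organize the argument by first classifying the pairings into ``same-family'' versus ``cross-family'' pairs and eliminating same-family pairs using $(*)$ before doing the residual finite check.
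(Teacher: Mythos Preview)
Your approach to (1) and (2) via the antipodal-pair decomposition of Lemma~\ref{msu}(c) is correct, though more combinatorial than the paper's. For (1) the paper simply writes $\Sigma=\sum_i\varepsilon_i(1+\delta_i)$, observes that $\delta_1\delta_2\delta_3=1$ forces evenly many $\delta_i$ to equal $-1$, so $\Sigma$ is either $2\varepsilon_i$ or $2(\varepsilon_1+\varepsilon_2+\varepsilon_3)$, and invokes Lemma~\ref{vs}. For (2) it evaluates the two partial sums $\varepsilon_1+\varepsilon_2+\varepsilon_3$ and $\eta_1+\eta_2+\eta_3$ directly in $\mathsf{U}_4$.

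For (3) your plan has a genuine gap. The assertion that a same-family pairing of the high-order element contradicts $(*)$ ``as in part (1)'' is false here: in (1) that elimination used $\delta_i\in\{\pm1\}$, which you no longer have. Concretely, take $o(\varepsilon_1)\geq 8$, $\varepsilon_2=-\varepsilon_1$, $\varepsilon_3=-\varepsilon_1^{-2}$, and $\delta_1=\delta_2=\zeta_4$, $\delta_3=-1$; then $(*)$ holds, $\Delta=\{\zeta_4,-1\}\subseteq\mathsf{U}_4$, $\Sigma=0$, and the high-order $\varepsilon_1$ is paired with $\varepsilon_2$. Your next claim---that in a cross-family pair $\{\varepsilon_j,\eta_k\}$ with $o(\varepsilon_j)\geq 8$ one must have $k=j$---also fails: $\delta_k=-\overline{\varepsilon_k}\varepsilon_j$ lies in $\mathsf{U}_4$ whenever $\varepsilon_j/\varepsilon_k\in\mathsf{U}_4$, which does not force $j=k$. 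So your route to ``some $\delta_j=-1$'' does not go through. The antipodal-pair approach \emph{can} be completed for (3), but it needs an honest enumeration of the pairings rather than these shortcuts.

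The paper's argument for (3) avoids the pairing combinatorics entirely. It uses part (1) to deduce that some $\delta_i$ has order $4$; then $\delta_1\delta_2\delta_3=1$ forces exactly two of them to be primitive $4$th roots, and the only multiset left to exclude is $\{\zeta_4,-\zeta_4,1\}$. Assuming (after relabelling) $\delta_1=\zeta_4$, $\delta_2=-\zeta_4$, $\delta_3=1$, one has $\Sigma=\varepsilon_1(1+\zeta_4)+\varepsilon_2(1-\zeta_4)+2\,\overline{\varepsilon_1\varepsilon_2}$; using $\zeta_8=(1+\zeta_4)/\sqrt{2}$ and $\sqrt{2}=\zeta_8+\zeta_8^{-1}$, this rearranges (after multiplying by $\zeta_8$) to a vanishing sum of four $2^n$th roots of unity, and Lemma~\ref{msu}(a) then forces all $\varepsilon_i\in\mathsf{U}_4$, contradicting $o(\varepsilon_i)\geq 8$.
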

  \begin{proof}
  (1) Since  $\Sigma=\varepsilon_1(1+\overline{\varepsilon_1}\eta_1)+\varepsilon_2(1+\overline{\varepsilon_2}\eta_2)+\varepsilon_3(1+\overline{\varepsilon_3}\eta_3)$,  
  and  $\delta_i = \overline{\varepsilon_i}\eta_i \in \{1, -1\}$ for $1\leq i\leq 3$,  the assumption ($*$) 
  implies that either $\Sigma = 2\varepsilon_i$ for some $i \in \{1,2,3\}$ or
  $\Sigma = 2(\varepsilon_1+\varepsilon_2+\varepsilon_3)$. Hence,  $\Sigma\neq 0$ by Lemma~\ref{vs}.

  (2) If $\Sigma=0$, then  part~(1) yields $n\neq 1$ and hence, by~($*$) and our choice of notation, $o(\varepsilon_1)=o(\varepsilon_2)=4$.
  Thus, $\varepsilon_1+\varepsilon_2+\overline{\varepsilon_1\varepsilon_2}$ is equal to either $2\zeta_4-1$, $-2\zeta_4-1$ or $1$.
  If $o(\eta_1)=4$, then similarly we have that $\eta_1+\eta_2+\overline{\eta_1\eta_2}$ is also equal to either $2\zeta_4-1$, $-2\zeta_4-1$ or $1$, which contradicts $\Sigma=0$.
  If $o(\eta_1)=1$, then $\eta_1+\eta_2+\overline{\eta_1\eta_2}=3$, and hence $\Sigma\neq 0$,
  a contradiction.
  Therefore,  $o(\eta_1)=2$ and, by ($*$) and the choice of notation,  $\Sigma=0$ implies
   $\{\varepsilon_1,\varepsilon_2\}=\{\zeta_4,-\zeta_4\}$ and $\eta_1=\eta_2=-1$.
  The other implication is trivial, so (2) is proved. 

 %

  (3) By part (1), at least one of the elements of $\Delta$ is a primitive $4$-th root of unity and, as  assumption ($*$) implies that $\delta_1\delta_2\delta_3 = 1$, in fact exactly two of them are.  
  So, by symmetry, we have just to show that $\delta_1 = \zeta_4$ and $\delta_2 = -\zeta_4$ leads to a contradiction.
  
  In fact,   $\Sigma = \varepsilon_1(1+\zeta_4) + \varepsilon_2(1-\zeta_4) + 2 \overline{\varepsilon_1 \varepsilon_2} = 0$ implies that
  $\varepsilon_1 \frac{1+\zeta_4}{\sqrt{2}}+\varepsilon_2 \frac{1-\zeta_4}{\sqrt{2}}=-\overline{\varepsilon_1\varepsilon_2}\sqrt{2}$.
  Since $\zeta_8=\frac{1+\zeta_4}{\sqrt{2}}$, $\zeta_8^{-1}=\frac{1-\zeta_4}{\sqrt{2}}$ and $\sqrt{2}=\zeta_8+\zeta_8^{-1}$, 
  we get  $\zeta_8\varepsilon_1+\zeta_8^{-1}\varepsilon_2+(\zeta_8+\zeta_8^{-1})\overline{\varepsilon_1\varepsilon_2}=0$ and,
  multiplying by $\zeta_8$, we have 
    $\zeta_4\varepsilon_1+\varepsilon_2+\zeta_4\overline{\varepsilon_1\varepsilon_2}+\overline{\varepsilon_1\varepsilon_2}=0$. 
Now, part~(a) of  Lemma~\ref{msu} implies that either (i) $\overline{\varepsilon_1\varepsilon_2}=-\varepsilon_2 = -\varepsilon_1$  
  or (ii)  $\overline{\varepsilon_1\varepsilon_2}= -\zeta_4\varepsilon_1$ and $\zeta_4\overline{\varepsilon_1\varepsilon_2} = -\varepsilon_2$.
  One checks that (i) is  satisfied only if $\varepsilon_1 = \varepsilon_2 = -1$, while (ii)  implies $\varepsilon_1, \varepsilon_2,\varepsilon_3 \in\mathsf{U}_4$,
  against the assumption $o(\varepsilon_i) \geq 8$ for some $i \in \{ 1,2,3\}$.
\end{proof}

We now collect some useful results about vanishing elements. 
\begin{lem}\label{nonvancent}
  Let $G$ be a group and  $z\in\z{G}$. Then  $x\in \V G$ if and only if  $zx\in \V G$. In particular, $\z{G} \subseteq \N{G}$.
 \end{lem}
\begin{proof}
  For $\chi\in\irr{G}$, let $\mathfrak{X}: G \rightarrow {\rm GL}_n(\mathbb{C})$ be a representation of $G$ affording  $\chi$, and let $\lambda$ be the irreducible constituent of $\chi_{\z G}$.
  Hence,  $\chi(xz)=\mathrm{Tr}(\mathfrak{X}(xz))=\mathrm{Tr}(\lambda(z)\mathfrak{X}(x))=\lambda(z)\chi(x)$ and  
  the  assertion follows.
  \end{proof}
\begin{lem}\label{dpnv}
  Let  $A$ and $B$  be normal subgroups of the group $G$ such that $A\cap B=1$.
Then, for $a \in A$, we have $a \in \V{G}$ if and only if $aB \in \V{G/B}$.
\end{lem}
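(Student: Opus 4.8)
The plan is to prove the two implications separately. The implication ``$aB \in \V{G/B} \Rightarrow a \in \V G$'' is immediate from the inflation observation recorded in Section~2: choosing $\psi \in \Irr(G/B)$ with $\psi(aB) = 0$ and regarding $\psi$ by inflation as a character of $G$ (with $B \leq \ker\psi$), we get $\psi(a) = \psi(aB) = 0$, so $a \in \V G$. Hence the real content is the converse.

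For the converse, the idea is to exploit that, since $A \cap B = 1$, the canonical projection $G \to G/B$ restricts to an isomorphism $\iota \colon A \xrightarrow{\ \sim\ } AB/B =: \bar A$ onto a normal subgroup $\bar A$ of $\bar G := G/B$. This $\iota$ is equivariant for the conjugation actions of $G$ on $A$ and of $\bar G$ on $\bar A$, so the induced bijection $\Irr(A) \to \Irr(\bar A)$, $\alpha \mapsto \bar\alpha := \alpha \circ \iota^{-1}$, carries the $G$-orbit of each $\alpha$ onto the $\bar G$-orbit of $\bar\alpha$. Now, given $\chi \in \Irr(G)$ with $\chi(a) = 0$, I would fix an irreducible constituent $\alpha$ of $\chi_A$, with $G$-orbit $\mathcal O$. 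By Clifford's theorem $\chi_A = e\sum_{\alpha' \in \mathcal O}\alpha'$ for some integer $e \geq 1$, and since $a \in A$ we have $\chi(a) = \chi_A(a)$, so
\[
\sum_{\alpha' \in \mathcal O}\alpha'(a) = \tfrac{1}{e}\,\chi(a) = 0 .
\]

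Next I would pick any $\psi \in \Irr(\bar G)$ lying over $\bar\alpha$ — for instance an irreducible constituent of the induced character $\bar\alpha^{\bar G}$. Applying Clifford's theorem in $\bar G$ gives $\psi_{\bar A} = f\sum_{\bar\alpha' \in \overline{\mathcal O}}\bar\alpha'$, where $\overline{\mathcal O} = \{\bar\alpha' : \alpha' \in \mathcal O\}$ is the $\bar G$-orbit of $\bar\alpha$ by the equivariance above. Since $\iota(a) = aB$ and $\bar\alpha'(aB) = \alpha'(a)$ for every $\alpha' \in \Irr(A)$, we conclude
\[
\psi(aB) = \psi_{\bar A}(aB) = f\sum_{\alpha' \in \mathcal O}\alpha'(a) = 0 .
\]
As $\psi \in \Irr(G/B)$, this says exactly that $aB \in \V{G/B}$, which completes the converse.

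The step that requires a little care — and the only place where the hypothesis $A \cap B = 1$ is genuinely used — is verifying that $\iota$ is $G$-equivariant (i.e. $\overline{\alpha^g} = \bar\alpha^{\,\bar g}$ for $g \in G$), so that the two Clifford decompositions above are governed by ``the same'' orbit and therefore by the same orbit-sum $\sum_{\alpha' \in \mathcal O}\alpha'(a)$. Everything else is routine: the normality of $A$ in $G$ and of $\bar A$ in $\bar G$ to invoke Clifford's theorem on both sides, the fact that $a \in A$ to reduce $\chi(a)$ to $\chi_A(a)$, and the inflation observation for the easy direction.
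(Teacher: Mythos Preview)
Your proof is correct and follows essentially the same strategy as the paper's: both arguments use Clifford's theorem twice, once to extract from $\chi(a)=0$ the vanishing of the orbit-sum $\sum_{\alpha'\in\mathcal O}\alpha'(a)$, and once to recognize that same orbit-sum (up to a positive multiplicity) as the value at $aB$ of an irreducible character of $G/B$ lying over the corresponding character of $AB/B$. The only cosmetic difference is that the paper works inside $G$ with the subgroup $A\times B$ and produces the new character explicitly via Clifford correspondence (showing $I_G(\alpha\times 1_B)=I_G(\alpha)$), whereas you pass to $\bar G=G/B$ and invoke the $G$-equivariance of the isomorphism $A\cong AB/B$; these are two phrasings of the same observation.
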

\begin{proof}
  Let $a \in A$ and assume that 
 $a \in \V G$.  Let $\chi \in \irr G$ be such that $\chi(a) = 0$ and let $\alpha$ an irreducible constituent of $\chi_A$.
 By Clifford's theorem
 $$\chi(a) = e \sum_{i = 1}^t \alpha^{x_i}(a)$$ 
 where $e = [\chi_A, \alpha] \neq 0$  and $\{x_1, x_2, \ldots, x_t\}$ is a transversal of the inertia subgroup $I = I_G(\alpha)$ in $G$. As $\chi(a) = 0$ and $e \neq 0$, we see that $\sum_{i = 1}^t \alpha^{x_i}(a) = 0$. 

 Consider now $\alpha \times 1_B \in \irr{A\times B}$ and observe that
 also $I_G(\alpha\times 1_B) = I$. Choosing  an irreducible character $\psi$  of $I$ lying over $\alpha \times 1_B$,  Clifford correspondence yields that
 $\chi_0 = \psi^G$ is an irreducible character of $G$. Since  $\chi_0$ lies over $\alpha \times 1_B$ and $B$ is normal in $G$,
 the kernel of $\chi_0$ contains $B$  and $\chi_0 \in \irr{G/B}$.
 Setting $e_0 = [(\chi_0)_{A\times B}, \alpha \times 1_B]$, we have
 $$\chi_0(a) = e_0 \sum_{i = 1}^t (\alpha \times 1_B)^{x_i}(a) =  e_0 \sum_{i = 1}^t \alpha^{x_i}(a) = 0 $$ 
 and hence $aB \in \V{G/B}$. 

Conversely, we have already observed that, by inflation of characters, $aB \in \V{G/B}$ implies $ a \in \V G$.
\end{proof}
The next lemma will be used repeatedly.

\begin{lem}\label{ind}
  Let $A$  be  a normal subgroup  of the group $G$. Then, for $a \in A$, 
 $a \in \V G$ if and only if there exists a character $\alpha \in \irr A$ such that $\alpha^G(a) = 0$. 
\end{lem}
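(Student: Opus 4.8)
The plan is to reduce the statement to Clifford's theorem together with the elementary observation that, since $A$ is normal, the restriction to $A$ of an induced character $\alpha^G$ is a multiple of the sum over the $G$-orbit of $\alpha$.

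First I would record, for $\alpha \in \irr A$ and $a \in A$, a formula for $\alpha^G(a)$. Writing $T$ for a transversal of $A$ in $G$ and $\alpha^x$ for the $G$-conjugates of $\alpha$, normality of $A$ guarantees $xax^{-1}\in A$ for every $x$, so the induction formula collapses (the "extend by zero outside $A$" has no effect) to $\alpha^G(a) = \sum_{x\in T}\alpha^x(a)$. Grouping the conjugates of $\alpha$ according to the inertia group $I = I_G(\alpha)$, and using that each coset of $I$ in $G$ meets $T$ in exactly $[I:A]$ points, this becomes $\alpha^G(a) = [I:A]\sum_{i=1}^t \alpha^{x_i}(a)$, where $x_1,\dots,x_t$ is a transversal of $I$ in $G$. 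On the other hand, for any $\chi\in\irr G$ lying over $\alpha$, Clifford's theorem gives $\chi_A = e\sum_{i=1}^t \alpha^{x_i}$ with $e = [\chi_A,\alpha]\ge 1$. Hence both $\alpha^G(a)$ and $\chi(a)$ are positive rational multiples of the orbit sum $\omega(a) := \sum_{i=1}^t \alpha^{x_i}(a)$, so $\alpha^G(a) = 0$ if and only if $\chi(a) = 0$, and this holds for every $\chi\in\irr G$ lying over $\alpha$.

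The two implications of the lemma then fall out immediately. If $a\in\V G$, choose $\chi\in\irr G$ with $\chi(a)=0$ and let $\alpha$ be an irreducible constituent of $\chi_A$; then $\chi$ lies over $\alpha$, so $\alpha^G(a)=0$. Conversely, if $\alpha^G(a)=0$ for some $\alpha\in\irr A$, pick any irreducible constituent $\chi$ of $\alpha^G$ (which is nonzero, since $\alpha^G(1)=[G:A]\alpha(1)>0$); then $\chi$ lies over $\alpha$, whence $\chi(a)=0$ and $a\in\V G$.

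I do not expect a genuine obstacle here: the only point requiring a little care is the bookkeeping in the formula for $\alpha^G|_A$, i.e. that each $G$-conjugate of $\alpha$ occurs exactly $[I_G(\alpha):A]$ times among the $\alpha^x$ with $x\in T$ — essentially the same computation already used in the proof of Lemma~\ref{dpnv}. Alternatively, one can avoid inertia groups altogether and simply note that $\alpha^G(a)=\sum_{x\in T}\alpha^x(a)$ is, by Clifford's theorem, proportional to $\chi_A(a)$ for any $\chi\in\irr G$ over $\alpha$.
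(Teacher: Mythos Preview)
Your proof is correct and follows essentially the same approach as the paper: both show that $(\alpha^G)_A$ and $\chi_A$ (for any $\chi\in\irr G$ over $\alpha$) are nonzero scalar multiples of the orbit sum $\sum_i \alpha^{x_i}$ via the induction formula and Clifford's theorem, and then read off the two implications. The only cosmetic difference is that the paper sums directly over $g\in G$ and groups into $I$-cosets of size $|I|$, whereas you first pass to a transversal of $A$ and then group into $I$-cosets of size $[I:A]$.
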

\begin{proof} Consider $\alpha \in \irr A$ and let $\chi \in \irr G$ be  an irreducible character of $G$
  lying over $\alpha$; so  $ e = [\chi_A, \alpha] \neq 0$.
  Let $T$ be a transversal for $I = I_G(\alpha)$ in $G$.
  By the formula of induction of characters we have
  $$|A| (\alpha^G)_A= \sum_{g \in G} \alpha^g = |I|\sum_{t \in T}\alpha^t$$
    and by Clifford's theorem
    $\chi_A = e \sum_{t \in T}\alpha^t$. 
    Hence,  $e|A| (\alpha^G)_A= |I|\chi_A$ and, as $e \neq 0$, we conclude that,
    for every $a \in A$, $\alpha^G(a) = 0$ if and only if $\chi(a)= 0$.

    Now, if $a \in A \cap \V G$, we just choose a $\chi \in \irr G$ such that $\chi(a) = 0$ and an irreducible
    constituent $\alpha$ of $\chi_A$, getting $\alpha^G(a) = 0$.
    Conversely, if $a\in A$ and  $\alpha \in \irr A$ are such that $\alpha^G(a) = 0$,  then by the previous paragraph
    $\chi(a) = 0$ for every irreducible character $\chi$  of $G$ lying over $\alpha$,  and then $a \in \V G$.
\end{proof}

\begin{lem} \label{sv}
 Let $N$ be a normal subgroup of the  group $G$. 
  \begin{description}
  \item[(1)]  If $G$ is solvable and  $N/\fitt{N}$ is abelian, then $N - \fitt{N} \subseteq \V G$.
\item[(2)]   Let $M \nor G$ be  such that $N \leq M$.
   If $I_M(\nu) = N$ for some $\nu \in \irr N$, then $M - N \subseteq \V G$. 
\end{description}
  \begin{proof}
  Part  (1) is Lemma~2.6 of~\cite{DPSS}.
  To prove (2), observe  that $I_M(\nu) = N$ implies $\mu = \nu^M \in \irr M$ and that $\mu(x) = 0$ for every $x \in M - N$.
  Thus,  it follows that $\mu^G(x) = 0$ for every $x \in M -N$, and hence $M-N \subseteq \V G$ by Lemma~\ref{ind}.
  \end{proof}
  \end{lem}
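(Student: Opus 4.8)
The plan is to treat the two parts by quite different means. For part~(1), which is essentially a known statement, the first move is the elementary remark that $\fitt{N} = \fitt{G}\cap N$ whenever $N \nor G$: indeed $\fitt N$ is characteristic in $N$, hence normal in $G$, and nilpotent, so $\fitt N \leq \fitt G$; while $\fitt G \cap N$ is a normal nilpotent subgroup of $N$, so $\fitt G \cap N \leq \fitt N$. Hence $N \setminus \fitt N = N \cap (G \setminus \fitt G)$, and for $x \in N \setminus \fitt N$ of odd order the conclusion $x \in \V G$ is immediate from the Isaacs--Navarro--Wolf theorem on non-vanishing elements in solvable groups~\cite{isaacs1999}, by which a non-vanishing element maps to a $2$-element of $G/\fitt G$, so that an odd-order one lies in $\fitt G$. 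The residual case of $2$-elements of $N \setminus \fitt N$ is where the hypothesis ``$N/\fitt N$ abelian'' is genuinely needed; rather than reprove it I would cite \cite[Lemma~2.6]{DPSS}, whose argument runs through Clifford theory applied to the abelian chief factors of $G$ lying in $N/\fitt N$.

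For part~(2), I would give the short argument in full. Since $N \nor M$ and $I_M(\nu) = N$, the only irreducible character of $N$ lying over $\nu$ is $\nu$ itself, so Clifford correspondence for the normal subgroup $N$ of $M$ shows that $\mu := \nu^M$ is irreducible, i.e.\ $\mu \in \irr M$. Because $N$ is normal in $M$, no $M$-conjugate of an element $x \in M \setminus N$ lies in $N$, so the induced-character formula gives $\mu(x) = 0$ for every $x \in M \setminus N$. Finally, since $N$ is normal in all of $G$, conjugation by any $g \in G$ carries $M \setminus N$ into itself (an element of $M$ lies in $N$ if and only if its $G$-conjugate does, as $N \nor G$), so every term of the induction formula for $\mu^G(x)$ vanishes; hence $\mu^G(x) = 0$ for all $x \in M \setminus N$, and Lemma~\ref{ind} applied to the normal subgroup $M$ of $G$ yields $M - N \subseteq \V G$.

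The main obstacle is located entirely in part~(1): unlike part~(2), it is not a formal consequence of elementary Clifford theory, and the precise purpose of the hypothesis ``$N/\fitt N$ abelian'' is to pin down the $2$-elements of $N \setminus \fitt N$ that the Isaacs--Navarro--Wolf theorem alone does not force into $\fitt G$. In a self-contained treatment this $2$-element analysis would be the real work, and it is exactly what \cite{DPSS} provides; part~(2), by contrast, requires nothing beyond the lemmas already available in the excerpt (chiefly Lemma~\ref{ind}).
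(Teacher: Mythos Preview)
Your proposal is correct and matches the paper's approach: for part~(2) you give exactly the paper's argument (irreducibility of $\nu^M$ via Clifford, vanishing off $N$, then Lemma~\ref{ind}), and for part~(1) you ultimately cite \cite[Lemma~2.6]{DPSS} just as the paper does. The only difference is that you preface the citation with some motivational discussion (the identity $\fitt N = \fitt G \cap N$ and the Isaacs--Navarro--Wolf theorem for odd-order elements), which the paper omits.
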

\begin{lem}[\mbox{\cite[Corollary 1.3]{gruninger}}]\label{nonvanishingsmallgroup}
  Let $g \in H \leq G$ be such that $G =\cent{G}{g}H$. Then $g\in \V G$ if and only if $g\in \V H$.
\end{lem}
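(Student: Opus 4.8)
The statement is a known result (Gruninger, \cite[Corollary 1.3]{gruninger}), so the cleanest thing is to cite it — but let me sketch the argument one would use to prove it from scratch. The plan is to reduce everything to a statement about induced characters and a counting/transitivity argument, exploiting the hypothesis $G = \cent{G}{g}H$. One direction is immediate: if $g \in \V H$, pick $\psi \in \irr H$ with $\psi(g) = 0$; then $\psi^G(g)$ is, by the induced character formula, a sum of values $\psi(x^{-1}gx)$ over coset representatives $x$ with $x^{-1}gx \in H$, and since $G = \cent{G}{g}H$ every $G$-conjugate of $g$ that lies in $H$ is already $H$-conjugate to $g$, so each such term equals $\psi(g) = 0$ (up to the multiplicity bookkeeping); hence $\psi^G(g) = 0$ and $g \in \V G$. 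Alternatively and more robustly, one shows directly that $\psi$ extends/induces to a character vanishing at $g$, or uses that $\psi^G$ restricted to the class of $g$ records only $H$-class data because of the factorization.

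For the nontrivial direction, suppose $g \in \V G$, so there is $\chi \in \irr G$ with $\chi(g) = 0$. Write $C = \cent{G}{g}$, so $G = CH$ and $|G| = |C|\,|H|/|C \cap H|$. The key identity is the comparison of $\chi$ restricted to the conjugacy class of $g$ with the restriction $\chi_H$: because $G = CH$, the $G$-conjugacy class of $g$ meets $H$ in a single $H$-conjugacy class (namely $g^H$), and the intersection numbers are controlled by $|C|$. Concretely, decompose $\chi_H = \sum_{\psi \in \irr H} [\chi_H,\psi]\,\psi$; I would argue that $\chi(g) = 0$ forces some constituent $\psi$ of $\chi_H$ to satisfy $\psi(g) = 0$. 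The mechanism: the class function on $H$ given by the restriction of the normalized class-sum idempotent attached to $g^G$ is, thanks to the factorization $G=CH$, proportional to the one attached to $g^H$; pairing $\chi$ against it on both sides and using Frobenius reciprocity transfers the vanishing $\chi(g)=0$ to the statement that $g$ is a zero of \emph{every} irreducible constituent of $\chi_H$ — or, more carefully, that $\sum_\psi [\chi_H,\psi]\psi(g) = \chi(g) = 0$ together with a rationality/weighting argument pins down an actual constituent vanishing at $g$. In the cleanest treatment one instead observes that $\chi$ lies over some $\psi \in \irr H$ with $\psi^G$ an irreducible multiple involving $\chi$, so by Lemma~\ref{ind}-type reasoning (applied with the roles adapted, since $H$ need not be normal) the vanishing of $\chi$ at $g$ is equivalent to the vanishing of $\psi^G$ at $g$, and the induced-character formula for $\psi^G(g)$ again collapses — via $G = CH$ — to a nonzero multiple of $\psi(g)$.

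I expect the main obstacle to be exactly this last collapsing step: $H$ is not assumed normal in $G$, so neither Lemma~\ref{ind} nor Clifford theory applies directly, and one must instead handle the induced character $\psi^G$ by hand. The induced-character formula gives $\psi^G(g) = \frac{1}{|H|}\sum_{x \in G,\ x^{-1}gx \in H} \psi(x^{-1}gx)$, and the point is to group the summation indices $x$ by the $H$-class of $x^{-1}gx$. The factorization $G = CH$ guarantees that as $x$ ranges over $G$ with $x^{-1}gx \in H$, the element $x^{-1}gx$ stays within the \emph{single} class $g^H$ (because any such $x$ can be written $x = ch$ with $c \in C$, whence $x^{-1}gx = h^{-1}g^c h = h^{-1}gh \in g^H$); and a short count shows each value $\psi(g)$ occurs with a fixed positive multiplicity. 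Thus $\psi^G(g)$ is a nonzero rational multiple of $\psi(g)$, and $\psi^G(g) = 0 \iff \psi(g) = 0$. Combining with the fact that $\chi(g) = 0$ for \emph{every} $\chi \in \irr G$ lying over a fixed $\psi$ with $\psi^G(g)=0$ finishes both directions simultaneously. The delicate bookkeeping is purely the multiplicity count in the induced-character sum; everything else is formal. Since all of this is carried out in \cite{gruninger}, in the paper one would simply invoke that reference.
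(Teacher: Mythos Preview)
The paper gives no proof and simply cites \cite[Corollary~1.3]{gruninger}, exactly as you anticipated. Your sketch correctly isolates the crucial consequence of $G=\cent Gg H$: every $x\in G$ can be written $x=ch$ with $c\in\cent Gg$ and $h\in H$, whence $x^{-1}gx=h^{-1}gh\in g^H$; so $g^G=g^H$ and the induced-character formula collapses to $\psi^G(g)=[G:H]\,\psi(g)$ for every class function $\psi$ on $H$.

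The gap is in the passage from this identity to the conclusion. You assert that ``$\chi(g)=0$ for every $\chi\in\irr G$ lying over a fixed $\psi$ with $\psi^G(g)=0$'', but that is precisely what Lemma~\ref{ind} delivers \emph{in the normal case}, where Clifford theory makes $(\psi^G)_{|A}$ and $\chi_{|A}$ proportional; for non-normal $H$ that proportionality fails and the claim is unjustified. Concretely, vanishing of a reducible character at $g$ does not force any irreducible constituent to vanish there (in $S_3$, the sum of the trivial and the $2$-dimensional character vanishes on $3$-cycles while neither summand does). So from $\psi^G(g)=0$ alone you cannot extract an irreducible $\chi$ with $\chi(g)=0$, and symmetrically from $\chi_H(g)=0$ you cannot extract an irreducible $\psi$ with $\psi(g)=0$. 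Both directions of your argument are blocked at this step.

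One clean way to close the gap is via the group algebra: $g\notin\V G$ if and only if the class sum $\widehat K=\sum_{x\in g^G}x$ is a unit in $\z{\mathbb C[G]}$, since the central character $\omega_\chi$ sends $\widehat K$ to $|g^G|\chi(g)/\chi(1)$. Because $g^G=g^H$, the same element $\widehat K$ is the class sum of $g$ in $\mathbb C[H]$ and lies in both centers. Semisimplicity and finite-dimensionality then give that $\widehat K$ is a unit in $\z{\mathbb C[H]}$ iff it is a unit in $\z{\mathbb C[G]}$: an inverse in $\mathbb C[H]\subseteq\mathbb C[G]$ is an inverse in $\mathbb C[G]$, and conversely a zero-divisor in $\z{\mathbb C[H]}$ (witnessed by some $0\ne v\in\mathbb C[H]$) remains a zero-divisor in $\mathbb C[G]$. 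The equivalence $g\in\V G\iff g\in\V H$ follows.
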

\begin{lem}\label{vP}
  If $G$ is a nilpotent group,  then $\N G = \z G$ and hence
  $\P G = \frac{m-1}{m}$, where $m = [G: \z G]$. 
\end{lem}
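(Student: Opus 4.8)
The plan is to prove the statement in three moves: reduce to the case of $p$-groups, treat $p$-groups by induction on the order using a column-orthogonality computation, and then simply read off the value of $\P G$. Note first that the inclusion $\z G\subseteq\N G$ is already available for every finite group from Lemma~\ref{nonvancent}, so in all cases the real content is the opposite inclusion $\N G\subseteq\z G$.

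\emph{Reduction to $p$-groups.} Since $G$ is nilpotent, write $G=P_1\times\cdots\times P_r$ as the direct product of its Sylow subgroups, so that $\z G=\z{P_1}\times\cdots\times\z{P_r}$. If $g=g_1\cdots g_r\notin\z G$, then some component, say $g_1$, lies outside $\z{P_1}$; assuming the result for $p$-groups there is $\psi\in\Irr(P_1)$ with $\psi(g_1)=0$, and then $\chi=\psi\times 1_{P_2}\times\cdots\times 1_{P_r}\in\Irr(G)$ satisfies $\chi(g)=\psi(g_1)=0$, so $g\in\V G$. Hence $\N G\subseteq\z G$ once this is known for $p$-groups.

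\emph{The $p$-group case.} Let $G$ be a $p$-group and argue by induction on $|G|$ that $\N G=\z G$, the abelian case being clear. Suppose for contradiction that $g\in\N G$ but $g\notin\z G$. Let $N$ be a minimal normal subgroup of $G$, so $N\le\z G$ and $|N|=p$. A character of $G/N$ vanishing at $gN$ would, by inflation, give a character of $G$ vanishing at $g$; hence $gN$ is non-vanishing in $G/N$, and the inductive hypothesis gives $gN\in\z{G/N}$, i.e.\ $[G,g]\le N$. If $G$ had two distinct minimal normal subgroups $N_1,N_2$, this would force $[G,g]\le N_1\cap N_2=1$, against $g\notin\z G$; so $G$ has a \emph{unique} minimal normal subgroup $N$, and $[G,g]\le N$. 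In particular the conjugacy class of $g$ lies in $gN$, so $|\cent{G}{g}|=|G|/p$; moreover, identifying $\Irr(G/N)$ with the set of $\chi\in\Irr(G)$ trivial on $N$, every such $\chi$ satisfies $|\chi(g)|=\chi(1)$ because $gN$ is central in $G/N$. Column orthogonality then yields
$$\sum_{\chi\in\Irr(G/N)}|\chi(g)|^2=\sum_{\chi\in\Irr(G/N)}\chi(1)^2=|G/N|=\frac{|G|}{p}=|\cent{G}{g}|=\sum_{\chi\in\Irr(G)}|\chi(g)|^2,$$
so the irreducible characters of $G$ that are \emph{not} trivial on $N$ contribute $0$ to $\sum_{\chi}|\chi(g)|^2$. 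Since $N\neq 1$, at least one such character exists, and it must vanish at $g$, contradicting $g\in\N G$. Hence $g\in\z G$, completing the induction.

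\emph{The value of $\P G$.} With $\N G=\z G$ established, $|\V G|=|G|-|\z G|$, so $\P G=1-|\z G|/|G|=1-1/m=(m-1)/m$ with $m=[G:\z G]$. The main obstacle is the $p$-group step, and within it the passage to a unique minimal normal subgroup $N$: once the inductive hypothesis has confined $[G,g]$ inside $N$, the delicate point is recognizing that the characters inflated from $G/N$ already exhaust $|\cent{G}{g}|$ in the orthogonality relation, which is exactly what forces every irreducible character nontrivial on $N$ to vanish at $g$.
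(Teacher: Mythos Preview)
Your proof is correct. The paper does not prove this lemma independently; it simply cites Theorem~A and Theorem~C of~\cite{isaacs1999} (Isaacs--Navarro--Wolf). By contrast, you give a self-contained elementary argument. The reduction to $p$-groups via the direct-product decomposition of a nilpotent group is immediate, and your inductive treatment of the $p$-group case is clean: the inductive hypothesis confines $[G,g]$ inside every minimal normal subgroup, forcing a unique such $N$ with $[G,g]=N$ (so the class of $g$ has size exactly $p$); then the column-orthogonality count shows that the characters trivial on $N$ already saturate $|\cent Gg|$, so every irreducible character nontrivial on $N$ vanishes at $g$. Your approach has the advantage of being entirely internal to the paper, avoiding an external citation, while the paper's approach trades this for brevity.
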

\begin{proof}
 It follows from Theorem~A and Theorem~C of~\cite{isaacs1999}.
\end{proof}

\begin{lem}[\cite{brough2016}]\label{brough}
 Let $G$ be a finite group and let $P$ be a Sylow $p$-subgroup of $G$, $p$ a prime number.
Then $\z P \cap \oh pG \subseteq \N G$.
\end{lem}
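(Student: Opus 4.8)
The plan is to reduce the statement, via Clifford theory, to a nonvanishing assertion about a sum of $p$-power roots of unity, where Lemma~\ref{vs} applies. Set $Q = \oh pG$: being a normal $p$-subgroup of $G$, $Q$ lies in every Sylow $p$-subgroup, so $Q \leq P$ and $Q \nor G$. Fix $g \in \z P \cap Q$ and an arbitrary $\alpha \in \irr Q$. Since $g \in Q$ and $Q \nor G$, all $G$-conjugates of $g$ lie in $Q$, and the formula for induced characters gives
$$ \alpha^G(g) = \frac{1}{|Q|}\sum_{x \in G}\alpha(x^{-1}gx) = \frac{|\cent{G}{g}|}{|Q|}\sum_{h \in g^G}\alpha(h). $$
As the scalar $|\cent{G}{g}|/|Q|$ is nonzero, Lemma~\ref{ind} reduces the task to proving that $\sum_{h \in g^G}\alpha(h) \neq 0$ for every $\alpha \in \irr Q$.

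Next I would extract two consequences of the hypothesis $g \in \z P$. First, $g$ centralizes $P$, hence centralizes $Q \leq P$; combined with $g \in Q$ this gives $g \in \z Q$, and since $\z Q$ is characteristic in $Q \nor G$, the whole class $g^G$ lies in the abelian group $\z Q$. Second, $P \leq \cent{G}{g}$ forces $p \nmid [G : \cent{G}{g}] = |g^G|$.

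To finish, restrict $\alpha$ to the central subgroup $\z Q$: one has $\alpha_{\z Q} = \alpha(1)\mu$ for a linear character $\mu \in \irr{\z Q}$, so that
$$ \sum_{h \in g^G}\alpha(h) = \alpha(1)\sum_{h \in g^G}\mu(h). $$
For each $h \in g^G \subseteq \z Q$, the value $\mu(h)$ is a root of unity whose order divides the exponent of the $p$-group $\z Q$; thus $\sum_{h \in g^G}\mu(h)$ is a sum of $|g^G|$ elements of $\mathsf{U}_{p^b}$ for a suitable $b \geq 0$. By Lemma~\ref{vs}, such a sum vanishes only if $p \mid |g^G|$, which has been ruled out. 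Hence $\sum_{h \in g^G}\alpha(h) \neq 0$, and $g \in \N G$.

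I expect the only delicate point --- and the real content --- to be the twofold use of the centrality hypothesis: that $g \in \z P$ (rather than merely $g \in \cent{P}{Q}$, or $g$ in a non-central part of $P$) is precisely what pushes $g^G$ into $\z Q$ \emph{and} makes $|g^G|$ coprime to $p$, so that the Lam--Leung estimate applies. Everything else is standard Clifford-theoretic bookkeeping; one could also argue directly with $\chi_Q$ for $\chi \in \irr G$, but routing through Lemma~\ref{ind} avoids tracking inertia subgroups and is cleaner.
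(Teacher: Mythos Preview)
Your argument is correct. Note, however, that the paper does not give a proof of this lemma: it is quoted from \cite{brough2016} and stated without argument. What you have supplied is a clean, self-contained derivation using only tools already available in the paper (Lemma~\ref{vs} and Lemma~\ref{ind}), which is a genuine bonus. The two key observations you isolate --- that $g \in \z P$ forces $g^G \subseteq \z Q$ and simultaneously makes $|g^G|$ prime to $p$ --- are exactly the content of Brough's result, and routing the vanishing question through $\alpha^G$ and the class sum (rather than directly through Clifford's decomposition of $\chi_Q$, where controlling the number of conjugates of $\theta$ is less immediate) is an efficient way to land on a sum of $p$-power roots of unity of length coprime to $p$.
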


\begin{rem}\label{examples} 
{\rm  Let $G$ be a Frobenius group with Frobenius kernel $K$ and complement $H$.
  If $K$ is an abelian $p$-group and $H$ is cyclic of order $m$, then we have  (using for instance Lemma~\ref{brough} and  
part (1) of Lemma~\ref{sv})  $\N G = K$ and hence $\P G = \frac{m-1}{m}$.}
\end{rem}

Next, we quote the theorem by A. Moret\'o and P.H. Tiep mentioned in the Introduction.
\begin{thm}[\cite{MT}]
\label{mtt}
  If $G$ is a finite group such that  $\P G < \mathfrak{a} = \P{A_7} $, then $G$ is solvable. 
\end{thm}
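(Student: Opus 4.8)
The natural approach is a minimal-counterexample argument built on the classification of finite simple groups. Suppose the theorem fails and choose a non-solvable group $G$ of minimal order with $\P G < \mathfrak{a}$. Since $\P{G/M} \le \P G$ for every $M \nor G$, minimality of $|G|$ forces every proper quotient of $G$ to be solvable. Hence every minimal normal subgroup of $G$ is non-abelian (an abelian one would yield a non-solvable proper quotient of smaller order), and $G$ has a \emph{unique} minimal normal subgroup $N$ (two distinct ones $N_1 \ne N_2$ would make $G/N_1$ a smaller non-solvable counterexample, as $N_2 N_1/N_1 \cong N_2$ is non-abelian). Thus $N = S_1 \times \cdots \times S_k$ with each $S_i$ isomorphic to a fixed non-abelian simple group $S$, the quotient $G/N$ is solvable, and $\cent GN = 1$ (otherwise $\cent GN$, being normal in $G$, would contain the unique minimal normal subgroup $N$, contrary to $\cent GN \cap N = \z N = 1$); so $G \hookrightarrow \Aut(N)$ and the permutation group induced by $G$ on the $k$ simple factors is solvable. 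It now suffices to show that such a $G$ satisfies $\P G \ge \mathfrak{a}$, which is a contradiction.

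\textbf{The core estimate.} This is carried out in two stages. First, the almost simple case: one proves $\P A \ge \mathfrak{a}$ for every almost simple group $A$ with socle a non-abelian simple group $S$, the extremal configuration being $A = S = A_7$ (recall $\mathfrak{a} = \P{A_7}$). The main tool is the classical fact that an element of a group is vanishing whenever it is $p$-singular for a prime $p$ such that the group, or a suitable normal subgroup detected through Lemma~\ref{ind}, affords an irreducible character of $p$-defect zero, since such a character vanishes on all $p$-singular elements. Combining the known classification of simple groups possessing no character of $p$-defect zero for the small primes $p \in \{2,3\}$ with lower bounds on the proportion of $p$-singular conjugacy classes in the simple groups — generically by exhibiting large families of elements of nontrivial $p$-part in groups of Lie type, by cycle-type counts in alternating groups, and by direct table inspection for the sporadic groups — one obtains $\P S \ge \mathfrak{a}$ for all simple $S$, and Clifford theory then lifts the bound to every almost simple $A$ over $S$. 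Second, one passes from $S$ to $N = S^k$ and to $G$: for $a = (a_1, \dots, a_k) \in N$, Lemma~\ref{ind}, applied with the normal subgroup $N$ and characters of $N$ of product type $\beta_1 \times \cdots \times \beta_k$, detects many vanishing elements of $G$ inside $N$ from the vanishing behaviour in the coordinates, and a counting argument over the $k$ coordinates together with the solvability of $G/N$ delivers $\P G \ge \mathfrak{a}$.

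\textbf{Main obstacle.} The genuinely hard step is the uniform lower bound $\P A \ge \mathfrak{a}$ for almost simple groups with the precise extremal behaviour; this is essentially the whole content and requires a long CFSG-based case analysis. The generic Lie-type estimates are tractable, but the groups of small rank or over small fields, the alternating groups $A_n$ for small $n$ (where one must verify $\P{A_n} \ge \P{A_7}$ directly and identify the non-vanishing classes), the $26$ sporadic groups (handled via their character tables), and the action of outer automorphisms — which may fuse vanishing with non-vanishing classes and hence must be controlled through Clifford theory — all demand individual attention. The passage from $S$ to $S^k$ and then to the solvable extension $G$ is a secondary technical point, but it too needs care, since for a normal subgroup $N$ one does not in general have $\V N \subseteq \V G$, so vanishing inside $G$ must be argued via the induced characters $(\beta_1 \times \cdots \times \beta_k)^G$ rather than intrinsically in $N$.
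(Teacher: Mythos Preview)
The paper does not prove this theorem at all: it is simply quoted from~\cite{MT} (Moret\'o and Tiep) as an external input, with no argument given. So there is no ``paper's own proof'' to compare your proposal against; the authors use this result as a black box in Proposition~\ref{NiF} to ensure that a group with $\P G < \mathfrak{a}$ is solvable, and then proceed entirely within the solvable setting.

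Your outline is a plausible high-level sketch of how a result of this type is typically attacked (minimal counterexample, unique non-abelian minimal normal subgroup, reduction to almost simple groups, CFSG case analysis via $p$-defect-zero characters and proportions of $p$-singular elements). Whether this matches the actual argument of Moret\'o and Tiep cannot be judged from the present paper. One caution: your sketch remains quite far from a proof, since the decisive content is precisely the uniform bound $\P A \ge \mathfrak{a}$ for almost simple $A$ with equality only at $A_7$, and you have only indicated the shape of that analysis rather than carried out any of its cases; in particular, the lifting from $S$ to $S^k$ and then to $G$ via Lemma~\ref{ind} is not automatic and would need a concrete inequality relating $\P G$ to $\P S$, which you have not stated.
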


Finally, we state two lemmas for later use.

\begin{lem}\label{pl2}
   Let $p$ be a prime number and let $G$ be a group such that $\mathrm{cd}(G) = \{1, p\}$.
   If  $[G:\z G] \neq  p^3$ and $|G'| > p$, then $[G:\z G] = p|G'|$ and there exists  a characteristic abelian subgroup  $A$
  of $G$ such that $[G:A] = p$. 
 \end{lem}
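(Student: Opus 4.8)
The plan is to exploit the classical structure theory of groups with two character degrees. Since $\mathrm{cd}(G) = \{1, p\}$, the group $G$ is nilpotent of class at most $3$ (by a theorem of Isaacs/Taketa-type arguments, or rather: $G$ is an $M$-group with $G/G'$ abelian and $G'$ nilpotent; in fact $\mathrm{cd}(G)=\{1,p\}$ forces $G$ to be either a $p$-group or to have an abelian normal subgroup of index $p$ by Isaacs' work, but more to the point we can use the known classification). First I would recall that when $\mathrm{cd}(G) = \{1,p\}$, either $G$ has an abelian normal subgroup of index $p$, or $|G:\z G| = p^3$. This dichotomy is exactly the content of a theorem of Isaacs (see \emph{Character Theory of Finite Groups}, building on work describing groups all of whose nonlinear irreducible characters have the same degree). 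Given the hypothesis $[G:\z G] \neq p^3$, we land in the first case: there is an abelian normal subgroup $A \trianglelefteq G$ with $[G:A] = p$.

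**Making $A$ characteristic and computing the index over the center.** Next I would upgrade $A$ to a characteristic subgroup. The natural candidate is to take an abelian subgroup of index $p$ that is canonically defined — for instance one could try $A_0 = \z{G}\,\cdot$ (something), but the cleanest route is: among all abelian normal subgroups of index $p$, if there are several, their structure is constrained, and one shows a characteristic one exists. Concretely, since $[G:A]=p$ and $A$ is abelian, $G' \leq A$ and $G/G'$ being abelian of exponent dividing $p$ times $|\z G|$-data; one has $\z G \leq A$ would fail only if $G = A\z G$, but then $G$ is abelian, contradicting $|G'|>p>1$. So $\z G \leq A$, hence $\z G = \z G \cap A = C_A(G)$. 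Since $A$ is abelian and $[G:A]=p$, picking $x \in G \setminus A$ we get $G = \langle A, x\rangle$ and $\z G = C_A(x)$. Then $[A : \z G] = [A : C_A(x)] = |[A,x]| \leq |G'|$; conversely $G' = [A,x]$ (as $G' \leq A$ and $G/A$ is generated by $xA$), so $[A:\z G] = |G'|$ and therefore $[G:\z G] = p\,[A:\z G] = p|G'|$. That gives the index formula.

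**Characteristicity — the main obstacle.** The delicate part is producing a \emph{characteristic} abelian subgroup of index $p$, since a priori $A$ need only be normal and there could be an orbit of such subgroups under $\Aut(G)$. I expect this to be where the real work lies. The approach: if $A$ is the \emph{unique} abelian subgroup of index $p$, it is automatically characteristic and we are done. Otherwise, suppose $B$ is another abelian (normal, since index $p$) subgroup of index $p$ with $B \neq A$; then $A \cap B$ has index $p$ in $A$, so $[G : A\cap B] = p^2$, and $A \cap B \leq \z G$ since it is centralized by both $A$ and $B$ and $G = AB$. Thus $[G:\z G] \leq p^2$, forcing $[G:\z G] = p|G'|$ with $|G'| \leq p$ — but the hypothesis $|G'| > p$ rules this out. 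Hence $A$ must be the unique abelian subgroup of index $p$, so it is characteristic, completing the proof. I would double-check the edge case $[G:\z G] = p^2$: then $G/\z G$ is abelian, so $G' \leq \z G$ and $G/G'$... actually $G/\z G$ abelian gives $G' \leq \z G$ and then $|G'| \leq |\z G|$; more directly $[G:\z G]=p^2$ with $\mathrm{cd}(G)=\{1,p\}$ forces (by the center-degree bound $\chi(1)^2 \leq [G:\z G]$ and the existence of a degree-$p$ character) equality $\chi(1)^2 = [G:\z G]$, which makes $G$ of "central type"-like behavior with $|G'|$ small — in any event incompatible with $|G'|>p$, so this case does not arise and the argument is clean.
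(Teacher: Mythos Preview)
Your proof is correct and matches the paper's approach, which simply cites Isaacs' Theorem~12.11, Lemma~12.12, and Lemma~12.13 for (respectively) the dichotomy, the formula $[G:\z G]=p|G'|$, and the characteristicity of $A$; you reprove the last two by hand via the same standard arguments. One minor point: the parenthetical ``(normal, since index $p$)'' is not justified in general, but it is also unnecessary---since $A$ is normal, $AB$ is a subgroup regardless, and in any case for characteristicity it suffices to show that $A$ is the unique abelian \emph{normal} subgroup of index $p$.
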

 \begin{proof}
   By~\cite[Theorem 12.11]{isaacs1976}, if $[G: \z G] \neq p^3$ then there exists a normal abelian subgroup $A$ of
   $G$ such that $[G:A] = p$. As $G$ is nonabelian, then $\z G \leq A$ and  by~\cite[Lemma 12.12]{isaacs1976} 
we get that $|A| = |\z G||G'|$ and hence $[G:\z G] = p|G'|$.
Since $|G'| > p$, by~\cite[Lemma 12.13]{isaacs1976} $A$ is characteristic in $G$. 
\end{proof}

 \begin{lem}\label{l5}
   Let $A$ and $B$ be groups such that $\z B \leq A \leq B$ and $B/A$ is cyclic. If every $\alpha \in \irr A$
   is $B$-invariant, then $\z A = \z B$. In particular, if $A \neq B$, then $A$ is nonabelian. 
 \end{lem}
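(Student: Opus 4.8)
The plan is to prove the nontrivial inclusion $\z{A}\subseteq\z{B}$; the reverse inclusion $\z{B}\subseteq\z{A}$ is clear, since $\z{B}\leq A$ and every element of $\z{B}$ centralizes $A\leq B$. Note that $A\nor B$ and, as $B/A$ is cyclic, we may write $B=\la A,b\ra$ for a suitable $b\in B$; thus it is enough to show that every $z\in\z{A}$ commutes with $b$.

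Next, fixing $z\in\z{A}$, I would use that $\z{A}$ is characteristic in $A$ and $A\nor B$ to conclude that $bzb^{-1}$ again lies in $\z{A}$. Then, since by hypothesis every $\alpha\in\irr{A}$ is $B$-invariant, we get $\alpha(bzb^{-1})=\alpha(z)$ for all $\alpha\in\irr{A}$; as the irreducible characters of $A$ separate its conjugacy classes, $z$ and $bzb^{-1}$ are conjugate in $A$. But both belong to $\z{A}$, so each of them is its own conjugacy class, and therefore $bzb^{-1}=z$. Hence $z\in\cent{B}{A}\cap\cent{B}{b}=\cent{B}{\la A,b\ra}=\cent{B}{B}=\z{B}$, which gives $\z{A}\subseteq\z{B}$ and thus $\z{A}=\z{B}$.

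Finally, for the ``in particular'' statement: if $A$ were abelian, then $A=\z{A}=\z{B}$, so $B/\z{B}\cong B/A$ would be cyclic, forcing $B$ to be abelian and hence $B=\z{B}=A$, contrary to $A\neq B$. There is no genuine obstacle in this argument; the only point that needs a moment's care is that the $B$-invariance of the irreducible characters of $A$ is being applied to the central element $z$, relying on the fact that $\z{A}$ is itself $B$-invariant so that $z$ and its conjugate $bzb^{-1}$ can be compared inside $\z{A}$.
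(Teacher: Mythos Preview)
Your proof is correct and follows essentially the same route as the paper's. The only cosmetic difference is that the paper invokes Brauer's permutation lemma to conclude that \emph{every} conjugacy class of $A$ is $B$-invariant, whereas you argue directly from the fact that irreducible characters separate conjugacy classes; applied to a central element $z$ (whose class is $\{z\}$), both arguments yield $z^b=z$ immediately, and the ``in particular'' clause is handled identically.
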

 \begin{proof}
   Under the current assumptions,  Brauer's permutation lemma (\cite[Theorem 6.32]{isaacs1976}) implies that
   every conjugacy class of $A$ is $B$-invariant. Hence,   $\z A \leq \z B$; the other inclusion is clear.

   Using that $\z A = \z B$ we argue that if $A$ is abelian, then  $A$ is central in  $B$ and, since $B/A$ is cyclic, 
   this implies that $B = \z B$, so $B =A$. 
 \end{proof}

\section{Auxiliary results}
Let $A$ be an abelian group and let $\hat{A}=\irr{A}$ be the dual group of $A$.
We  mention that $A \cong \hat{A}$ and that $A$ and $\hat{\hat{A}}$ (the dual of $\hat{A}$) can be identified in a canonical way. 
For  $B \leq A$, let  $B^\perp = \{\alpha\in\hat{A}\mid \alpha(b)=1,~\forall~b\in B\} \leq \hat{A}$, and 
for  $V \leq \hat{A}$, let $V^\perp = \{a\in A\mid \lambda(a)=1,~\forall~\lambda \in V\} \leq A$.
Then $B^\perp \cong \widehat{A/B}$, $\hat{B} \cong \hat{A}/B^\perp$ and $B^{\perp\perp} = B$ (see for instance \cite[V.6.4]{huppertI}). 
Finally, we  recall that for every  $y \in \Aut(A)$,  $y$ acts  on $\hat{A}$ via 
$\alpha^y(a) = \alpha(a^{y^{-1}})$, where $\alpha \in \hat{A}$ and $a \in A$. 
We also write, as usual, $[A,y] = \langle [a,y]\mid a \in A\rangle$, where $[a,y] = a^{-1}a^y$ (the commutator
in the semidirect product $A \rtimes \la y \ra$).

\begin{lem}\label{pgroup}
  Let $A$ be an abelian group, $y \in \Aut(A)$ such that $o(y) = 2$, 
  and  let $Q = A \rtimes \la y \ra$.
  Then  
\begin{description}
  \item[(1)] The map $\gamma:A\rightarrow A$, defined by  $\gamma(a)=[a,y]$
  for  $a\in A$, is a group homomorphism with $\mathrm{Im}(\gamma)=[A,y] = Q'$ and $\ker(\gamma)=  \cent Ay =  \z{Q}$.
  Moreover, $A/\cent Ay$ and $[A,y]$ are isomorphic $\la y \ra$-modules, on which $y$ acts as the inversion. 
  \item[(2)] $[A,y]^{\perp}= \cent{\hat{A}}y$ and $[\hat{A},y]^{\perp}=\cent Ay$.
  \item[(3)] $[\hat{A},y]\cong [A,y]$. 
  \end{description}
\end{lem}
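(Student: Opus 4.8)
The plan is to treat the three parts in order, exploiting the fact that $\langle y\rangle$ has order $2$ so that the action of $y$ on any $\langle y\rangle$-module splits the module into a $+1$-eigenspace (the fixed points) and a $-1$-eigenspace (on which $y$ inverts), at least up to the usual obstructions at the prime $2$; here, however, the commutator map $\gamma$ handles everything cleanly without needing to invert $2$.

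For part~(1), I would first check directly that $\gamma$ is a homomorphism: since $A$ is abelian and $y^2 = 1$, one computes $[ab,y] = (ab)^{-1}(ab)^y = b^{-1}a^{-1}a^y b^y = (a^{-1}a^y)(b^{-1}b^y) = [a,y][b,y]$, using that $a^{-1}a^y$ is central (indeed everything is, as $A$ is abelian). The kernel is by definition $\{a : a^y = a\} = \cent Ay$, and since $A$ is abelian and of index $2$ in $Q$, one checks $\cent Ay = \z Q$ (an element of $A$ is central in $Q$ iff it is $y$-fixed, and $y\notin\z Q$ unless $Q$ is abelian, in which case the statement is trivial). The image is $\langle [a,y] : a\in A\rangle = [A,y]$ by definition, and $[A,y] = Q'$ because $Q' $ is generated by commutators $[a,y]$ (commutators among elements of $A$ are trivial, and $[y,y]=1$), so $Q' \leq A$ and $Q' = [A,y]$. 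The first isomorphism theorem then gives $A/\cent Ay \cong [A,y]$ as groups; to see this is an isomorphism of $\langle y\rangle$-modules with $y$ acting as inversion, note that for $a\in A$, $\gamma(a)^y = [a,y]^y = (a^{-1}a^y)^y = (a^y)^{-1}a^{y^2} = (a^y)^{-1}a = [a,y]^{-1} = \gamma(a)^{-1}$, so $y$ inverts $[A,y]$, and correspondingly $y$ inverts $A/\cent Ay$ since $a^y\cdot a = a^{-1}a^y \cdot a^2$... more simply, $\gamma$ intertwines the $y$-action up to the observation that both sides carry the inversion action, which is what the statement records.

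For part~(2), I would use the duality dictionary recalled just before the lemma: the action of $y$ on $\hat A$ is $\alpha^y(a) = \alpha(a^{y^{-1}}) = \alpha(a^y)$. Then $\alpha\in[A,y]^\perp$ iff $\alpha([a,y])=1$ for all $a$, i.e. $\alpha(a^{-1}a^y)=1$, i.e. $\alpha(a) = \alpha(a^y) = \alpha^y(a)$ for all $a$, i.e. $\alpha = \alpha^y$, i.e. $\alpha\in\cent{\hat A}y$; this proves $[A,y]^\perp = \cent{\hat A}y$. Applying this same identity with $\hat A$ in place of $A$ (legitimate since $y$ acts as an order-$\leq 2$ automorphism of $\hat A$ and $\hat{\hat A} \cong A$ canonically, with the $y$-actions matching) gives $[\hat A,y]^\perp = \cent{\hat{\hat A}}y = \cent Ay$.

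Part~(3) is then immediate from parts (1) and (2): applying part~(1) to the pair $(\hat A, y)$ gives $[\hat A,y] \cong \hat A / \cent{\hat A}y$. By part~(2), $\cent{\hat A}y = [A,y]^\perp$, so $\hat A/\cent{\hat A}y = \hat A/[A,y]^\perp \cong \widehat{[A,y]} \cong [A,y]$, using $\hat B \cong \hat A/B^\perp$ from the duality facts and $\widehat{B}\cong B$ for the finite abelian group $B = [A,y]$. Chaining these isomorphisms yields $[\hat A,y]\cong[A,y]$. I expect no real obstacle here; the only point requiring a line of care is verifying that the $y$-action on $\hat A$ is genuinely an automorphism of order dividing $2$ and that the canonical identification $\hat{\hat A}\cong A$ is $\langle y\rangle$-equivariant, so that parts (1) and (2) may legitimately be re-applied with $A$ replaced by $\hat A$.
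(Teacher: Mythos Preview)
Your proof is correct and follows essentially the same approach as the paper's: both verify directly that $\gamma$ is a homomorphism with the stated kernel and image, compute $[a,y]^y=[a,y]^{-1}$ to get the inversion action, prove $[A,y]^\perp=\cent{\hat A}y$ via the definition of the dual action, and obtain the second equality of~(2) by double duality. The only cosmetic difference is in part~(3): the paper runs the chain $[A,y]\cong A/\cent Ay\cong\widehat{A/\cent Ay}\cong\cent Ay^\perp=[\hat A,y]$ using the second identity of~(2), whereas you apply part~(1) to $\hat A$ and use the first identity of~(2) to get $[\hat A,y]\cong\hat A/[A,y]^\perp\cong\widehat{[A,y]}\cong[A,y]$---these are dual routes through the same ingredients.
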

\begin{proof}
(1)  For $a, b \in A$, $[ab,y]=[a,y]^b[b,y]=[a,y][b,y]$ and  $[a,y]^{y} = [a^{y}, y]$; so, $\gamma$ is a morphism of $\la y \ra$-modules.  
Hence, $\mathrm{Im}(\gamma)= \{ [a,y]\mid a \in A\} =[A,y]$ and, by elementary commutator computation, $[A,y] = Q'$.
Moreover,    $\ker(\gamma) = \cent A y = \z Q$, as $\z Q \leq A$, and
$A/\cent Ay$ and $[A,y]$ are isomorphic $\la y \ra$-modules.
Finally, for every $a \in A$, $[a,y]^y= (a^{-1})^y a^{y^2} = (a^y)^{-1}a = [a,y]^{-1}$, so $y$ acts as the inversion on
 both  $[A,y]$ and $A/\cent Ay$.

  (2) Since  $y$ fixes  $\alpha \in \hat{A}$ if and only if $[A,y] \leq \ker(\alpha)$, we have  
$[A,y]^{\perp}=\cent{\hat{A}}y$. The second equality hence  follows by the canonical isomorphism of $A$ and the double dual $\hat{\hat{A}}$.

  (3) By part (1), $[A, y] \cong A/\cent Ay$ and $A/\cent Ay \cong \widehat{A/\cent Ay} \cong {\cent Ay}^\perp = [\hat{A}, y]$ by (2).
\end{proof}

 Given  an element $g$ of a finite group $G$ and a prime number $p$, there exists a unique $p$-element $g_p \in G$,
 the \emph{$p$-part} of $g$,
 such that $g = g_p h$, where $g_p$ and $h$ commute and the order of $h$ is coprime to $p$; see~\cite[Lemma 8.18]{isaacs1976}.
 The argument in the proof of Lemma~8.18 of~\cite{isaacs1976} shows that if  $n$ is any multiple of the order of $g$, say $n = m p^a$ with $m$ coprime to $p$, then $g_p = g^{m m'}$, where $m'$ is
 an integer such that $mm' \equiv 1 \pmod{p^a}$.
 We also remark that,  if $y$ is a $p$-element of $G$ and $y$ commutes with $g$, then $(gy)_p= g_py$.
 
\begin{pro}\label{56connect}
  Let $A$ be an abelian normal subgroup of the group $G$ such  that $[G:A] \leq 6$ and $[G:A] \neq 5$.
  For $a \in A$:
  \begin{description}
  \item[(1)] if $[G:A] \neq 3$ and $G$ has  abelian Sylow $3$-subgroups, then $a \in \V G$ if and only if $a_2 \in \V G$;
      \item[(2)] if $[G:A] = 3$, then  $a \in \V G$ if and only if $a_3 \in \V G$.
  \end{description}
\end{pro}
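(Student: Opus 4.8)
The plan is to translate both statements, by means of Lemma~\ref{ind}, into assertions about the vanishing of a sum of at most six roots of unity, and then to apply Lemma~\ref{msu} together with the hypothesis on Sylow $3$-subgroups. Throughout, set $p = 2$ in part~(1) and $p = 3$ in part~(2), write $A = A_p \times A_{p'}$ with $A_p$ the Sylow $p$-subgroup and $A_{p'}$ the Hall $p'$-subgroup of $A$ (both characteristic in $A$, hence normal in $G$), fix a transversal $T$ of $A$ in $G$ containing $1$, and put $n = |T| = [G:A]$, so $n \in \{1,2,4,6\}$ in part~(1) and $n = 3$ in part~(2). Since $A$ is abelian and normal, for $b \in A$ and $\alpha \in \hat A$ one has $\alpha^G(b) = \sum_{t \in T}\alpha(b^t) = \alpha(b)\sum_{t \in T}\alpha([b,t])$, and $[b,t]$ depends only on the coset $tA$. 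By Lemma~\ref{ind} (applied to $A$ and to $A_p$) it follows that $a \in \V G$ if and only if $\sum_{t \in T}\alpha([a,t]) = 0$ for some $\alpha \in \hat A$, and that $a_p \in \V G$ if and only if $\sum_{t \in T}\alpha([a_p,t]) = 0$ for some $\alpha \in \hat A$. Finally, since $[a,t] = [a_p,t]\,[a_{p'},t]$ is the factorization of $[a,t]$ into its $p$-part and $p'$-part, $\alpha([a_p,t])$ is precisely the $p$-part of the root of unity $\alpha([a,t])$.

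The implication $a_p \in \V G \Rightarrow a \in \V G$ uses only that $A$ is abelian normal: if $\alpha$ is trivial on $A_{p'}$, then $\alpha([a,t]) = \alpha([a_p,t])$ for all $t$, and every character of $A_p$ is the restriction of such an $\alpha$; so a witness for $a_p \in \V G$ yields one for $a \in \V G$. For the converse, fix $\alpha \in \hat A$ with $\sum_{t \in T}\alpha([a,t]) = 0$ and aim to show the $p$-parts of the summands again sum to $0$, so that the same $\alpha$ witnesses $a_p \in \V G$. The case $n = 1$ is vacuous. If $n \in \{2,4\}$ (so $p = 2$), Lemma~\ref{msu}(a) gives, after relabelling, $\alpha([a,t_2]) = -\alpha([a,t_1])$ and, when $n=4$, $\alpha([a,t_4]) = -\alpha([a,t_3])$; since the $p$-part map on roots of unity is multiplicative and fixes $-1$, the $2$-parts pair up as $\{u,-u\}$ and sum to $0$. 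If $n = 3$ (so $p = 3$), Lemma~\ref{msu}(a) gives, after relabelling, $\alpha([a,t_i]) = \delta\zeta_3^i$ for $i = 1,2,3$, whence the $3$-parts are $w\zeta_3^i$ (with $w$ the $3$-part of $\delta$) and sum to $w(\zeta_3 + \zeta_3^2 + \zeta_3^3) = 0$. This settles part~(2).

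The only delicate case is $n = 6$ in part~(1), and this is exactly where the abelian Sylow $3$ hypothesis enters. Let $P$ be a Sylow $3$-subgroup of $G$. As the $3$-part of $[G:A] = 6$ equals $3$, we have $P \cap A = A_3$ and $[P:A_3] = 3$, so $P = \langle A_3, s\rangle$ for a $3$-element $s$ whose image $\bar s$ in $G/A$ has order $3$; and $[A_3,s]=1$ because $P$ is abelian. Now $\bar H = \langle\bar s\rangle$ is the unique subgroup of order $3$ of $G/A$, of index $2$; pick $u \in G$ with $\bar u \notin \bar H$. For $t \in T$ with $tA \in \bar H$ we get $[a_3,t] = 1$; for $t \in T$ with $tA = \bar s^r\bar u$, the commutator identity $[a_3, s^r u] = [a_3,u]\,[a_3,s^r]^u = [a_3,u]$ shows $[a_3,t] = [a_3,u]$. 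Hence the multiset $\{\alpha([a_3,t]) : t \in T\}$ — which is the multiset of $3$-parts of $\{\alpha([a,t]) : t \in T\}$ — equals $\{1,1,1,c,c,c\}$ with $c = \alpha([a_3,u])$, and so it has a value of multiplicity $3$ or $6$. On the other hand, applying Lemma~\ref{msu}(b) to the six numbers $\alpha([a,t])$: in configuration~(b2) the $3$-parts are the $w_q\zeta_3^r$ ($q \in \{0,1\}$, $0 \le r \le 2$), each occurring with multiplicity at most $2$; in configuration~(b3) they are $w,w,w,w,w\zeta_3,w\zeta_3^2$, with multiplicities $4,1,1$. Neither admits a value of multiplicity $3$ or $6$, so we must be in configuration~(b1): $\{\alpha([a,t]) : t \in T\} = \{\pm\delta_0, \pm\delta_1, \pm\delta_2\}$. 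Taking $2$-parts (and using $-1 \in \mathsf{U}_2$), $\{\alpha([a_2,t]) : t \in T\} = \{\pm u_0, \pm u_1, \pm u_2\}$, which sums to $0$; hence $a_2 \in \V G$, completing part~(1).

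I expect the $n = 6$ case of part~(1) to be the main obstacle: one has to exclude the ``mixed'' configurations (b2) and (b3) of Lemma~\ref{msu}(b), and the leverage for this is the rigidity of the map $tA \mapsto [a_3,t]$, which is forced by the existence — guaranteed precisely by the abelian Sylow $3$ hypothesis — of a $3$-element $s$ centralizing $A_3$ whose image generates the normal subgroup of order $3$ of $G/A$. The remaining steps amount to routine bookkeeping with the small-$k$ cases of Lemma~\ref{msu} and with $p$-parts of roots of unity.
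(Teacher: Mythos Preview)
Your proof is correct and follows essentially the same strategy as the paper: reduce via Lemma~\ref{ind} to a vanishing sum of at most six roots of unity, handle $n\le 4$ with Lemma~\ref{msu}(a), and for $n=6$ use the abelian Sylow~$3$ hypothesis to force the multiset of $3$-parts to take at most two distinct values, thereby excluding configurations~(b2) and~(b3) of Lemma~\ref{msu}(b). The only cosmetic differences are that you factor out $\alpha(a)$ and work with the commutators $[a,t]$, you phrase the exclusion of (b2)--(b3) via the explicit multiset pattern $\{1,1,1,c,c,c\}$ whereas the paper bounds $|\{\alpha^{t}(a_3)\}|\le |a_3^G|\le 2$, and for the converse implication you argue directly rather than invoking Lemma~\ref{dpnv}.
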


\begin{proof}
  Let $k = [G:A]$ and let $T = \{ t_1, t_2, \ldots , t_k\}$ be a transversal for $A$ in $G$. We can assume $k \neq 1$, as otherwise $G$ is abelian and there is nothing to prove. Let $Q$ be a Sylow $3$-subgroup of $G$.
  
  Let us assume, first, that $a \in A$ is a vanishing element of $G$. Then by Lemma~\ref{ind} there exists an irreducible
  character $\alpha$ of $A$ such that 
$$ \alpha^G(a) = \sum_{i=1}^k \alpha^{t_i}(a) = 0 \;.$$
We will show, for  $p = 2$ if $k\neq 3$, and for $p=3$ if $k =3$, that  $\alpha^G(a_p) = 0$, concluding by another application
of Lemma~\ref{ind} that $a_p \in \V G$.

Since $A$ is abelian, $\alpha$ is an element of the dual group $\hat{A}$, 
i.e. it is a homomorphism $\alpha: A \rightarrow \mathsf{U}_n$, where $n = |A|$.
For any prime $p$, writing $n = m p^a$, where $m$ is coprime to $p$,  and setting $c = m m'$, where $m'$ is an inverse of $m$ modulo $p^a$,
by the discussion preceding this lemma we know that $a_p = a^c$ in $A$, $\alpha_p = \alpha^c$ in $\hat{A}$ and
$(\alpha(a))_p = (\alpha(a))^c$ in $\mathsf{U}_n$. 
Hence, 
\begin{equation}\label{ppart}
(\alpha(a))_p = \alpha_p(a) = \alpha(a_p).
\end{equation}

Now, if $k = 2$, then clearly $\alpha^{t_2}(a) = -\alpha^{t_1}(a)$.
 It follows that $\alpha^{t_2}(a_2) = (\alpha^{t_2}(a))_2 = (\zeta_2\alpha^{t_1}(a))_2 =  -\alpha^{t_1}(a_2)$, 
and hence  $\alpha^G(a_2)=0$.

If  $k = 3$, then by part (a) of  Lemma~\ref{msu} there exists a root of unity $\delta \in \mathsf{U}_n$ such that,
up to renumbering the elements of $T$, $\alpha^{t_i}(a) = \delta \zeta_3^i$, for $i = 1,2,3$.
Denoting by $\delta_3$ the $3$-part of $\delta$ in $\mathsf{U}_n$ and using~(\ref{ppart}) for
 $\alpha^{t_i} \in \irr A$, we have
$(\alpha^{t_i}(a))_3 = \delta_3\zeta_3^i = \alpha^{t_i}(a_3)$ for $i = 1,2,3$. Hence,  
$ \alpha^G(a_3) = \sum_{i=1}^3 \alpha^{t_i}(a_3) = \delta_3 \sum_{i=1}^3 \zeta_3^i = 0$.

If $k = 4$, then again by part (a) of  Lemma~\ref{msu}, up to renumbering, $\alpha^{t_2}(a) = -\alpha^{t_1}(a) $ and
$\alpha^{t_4}(a) = -\alpha^{t_3}(a)$. So, as before,  $\alpha^{t_2}(a_2) = -\alpha^{t_1}(a_2)$ and
 $\alpha^{t_4}(a_2) = -\alpha^{t_3}(a_2)$, yielding $\alpha^G(a_2) = 0$. 

Finally, we assume $k = 6$.
By Lemma~\ref{msu} we have three possible situations (of which only one,
as we will show, can actually occur). 

(i) Up to renumbering, $\alpha^{t_2}(a) = -\alpha^{t_1}(a)$,  $\alpha^{t_4}(a) = -\alpha^{t_3}(a)$ and
$\alpha^{t_6}(a) = -\alpha^{t_5}(a)$. In this case, as before, we deduce that $\alpha^G(a_2) = 0$. 

(ii)  There exist $\gamma, \delta \in \mathsf{U}_n$ such that, up to renumbering,
$\alpha^{t_i}(a) = \gamma\zeta_3^i$, for $i = 1,2,3$ and $\alpha^{t_i}(a) = \delta\zeta_3^i$, for $i = 4,5,6$.
So, the set of $3$-parts  $X_3 = \{ (\alpha^{t_i}(a))_3 \mid 1 \leq i \leq 6 \} =
\delta_3\mathsf{U}_3 \cup \gamma_3 \mathsf{U}_3$,
being union of  cosets of  $\mathsf{U}_3$ in $\mathsf{U}$, must contain either $3$  or $6$ elements.
But this is impossible: in fact,  for $t \in T$, we have  $(\alpha^{t}(a))_3=  \alpha^{t}(a_3) = \alpha(a_3^{t^{-1}})$,
and this implies
$|X_3| \leq |a_3^G|$, where $a_3^G$ is the conjugacy class of $a_3$ in $G$. We rule out this case by observing that
$|a_3^G| \leq 2$,  because
$a_3$ belongs to the abelian group $Q$ and hence $a_3$ is central in $AQ$.

(iii)  There exists $\delta \in \mathsf{U}_n$ such that, up to renumbering,
$\alpha^{t_i}(a) = \delta\zeta_5^i$, for $i = 1,2,3,4$, $\alpha^{t_5}(a) = -\delta\zeta_3$ and
$\alpha^{t_6}(a) = -\delta\zeta_3^2$.
In this case, $X_3 = \{ (\alpha^{t_i}(a))_3 \mid 1 \leq i \leq 6 \} $ is a coset of $\mathsf{U}_3$ in $\mathsf{U}$.
But, as argued above, $|X_3| \leq |a_3^G| \leq 2$, which rules out also this case. 

\smallskip
Conversely, let $a \in A$ and let $p$ be a prime number such that $a_p \in \V G$.
Let $B$ be the $p$-complement of $A$ and $A_p$ be the Sylow $p$-subgroup of $A$; so, $A = A_p \times B$ and $a_p\in A_p$.
By Lemma~\ref{dpnv}, then $a_pB \in \V{G/B}$ and, by inflation, $a_pB \subseteq \V G$. Since $a \in a_pB$, we conclude that
$a \in \V G$. 
\end{proof}

The next lemma shows that it is necessary to exclude the index $5$ case in the statement of Proposition~\ref{56connect}.

\begin{lem}\label{m5}
  Let $U$ and $V$ be noncentral minimal normal subgroups of the group $G$. Assume that $U$ is a $2$-group, $V$ is a $3$-group
  and that $|G/UV| = 5$. Then $\N G = U \cup V$ and $\P G = \frac{133}{135} > \mathfrak{a}$. 
\end{lem}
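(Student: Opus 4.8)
The plan is to pin down the structure of $G$ precisely enough to apply the vanishing criteria already developed. Since $U$ and $V$ are minimal normal subgroups with $U$ a $2$-group and $V$ a $3$-group, both are elementary abelian, $U \cap V = 1$, and $UV = U \times V$ is an abelian normal subgroup of index $5$ in $G$. Thus $G/UV$ is cyclic of order $5$, so $G$ is solvable (indeed metabelian-by-cyclic), and we may write $G = (U \times V)\rtimes \langle t \rangle$ for some $t$ of order dividing $5$ (after adjusting by an element of $UV$; note $5 \nmid |UV|$). Minimality of $U$ and $V$ as normal subgroups, together with the fact that they are noncentral, forces $\langle t\rangle$ to act irreducibly and nontrivially on each of $U$ and $V$; in particular $\cent{G}{U} = V\langle?\rangle$ must be analyzed — more precisely, since the action of $t$ on $U$ is fixed-point-free (a nontrivial irreducible action of a group of prime order $5$ on an $\FF_2$-module has no nonzero fixed points, as the fixed space is a submodule), and likewise on $V$, we get $\cent{G}{t} \cap (U\times V) = 1$, so $\langle t\rangle$ complements $U\times V$ and $G = (U\times V)\rtimes \langle t\rangle$ is a Frobenius-type configuration on each factor.

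**Locating the non-vanishing elements.** First I would show $U \times V \subseteq \N G$ is false and instead $\N G = U \cup V$. For the elements outside $U\times V$: any $g \notin U\times V$ has nontrivial image in the cyclic group of order $5$, and I claim $g \in \V G$. Using Lemma~\ref{sv}(1) with $N = U\times V$: here $G$ is solvable and $N$ is abelian, so $N = \fitt N$ and that lemma gives nothing directly; instead use Lemma~\ref{ind} applied to $A = U\times V$ with a suitable $\alpha \in \irr{U\times V}$ whose inertia group in $G$ is exactly $U\times V$ (such $\alpha$ exists because the action of $\langle t\rangle$ on $\widehat{U\times V} = \hat U \times \hat V$ is again fixed-point-free off the trivial character, using Lemma~\ref{pgroup}-style duality or directly that a Frobenius complement acts freely on the nontrivial characters). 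Then $\alpha^G$ vanishes off $U\times V$, so $G - (U\times V) \subseteq \V G$ by Lemma~\ref{ind}. Next, for $g \in (U\times V) - (U \cup V)$, write $g = uv$ with $1\neq u \in U$, $1 \neq v \in V$; I would find $\chi \in \irr G$ with $\chi(uv) = 0$ by taking a character $\alpha = \beta\times\gamma$ with $\beta \in \hat U$, $\gamma\in\hat V$ both nontrivial and inducing: $\alpha^G(uv) = \sum_{i} \beta^{t_i}(u)\gamma^{t_i}(v)$ is a sum of $5$ roots of unity in $\mathsf{U}_6$ (orders dividing $6 = \operatorname{lcm}(2,3)$), and by Lemma~\ref{vs} a sum of $5$ such roots cannot be zero unless $5 = 2n_1 + 3n_2$ — which is impossible with $n_1,n_2 \geq 0$ and the sum actually being $5$ terms — wait, $5 = 2+3$ works, so I must instead argue via the explicit Frobenius action that the multiset $\{\beta^{t_i}(u)\gamma^{t_i}(v)\}$ is a full $\langle t\rangle$-orbit and its sum is a Gauss-type sum that is nonzero; more cleanly, use that $uv$ is conjugate under $G$ only within its $UV$-coset and compute $\chi(uv)$ for $\chi$ the (unique up to the complement's action) character of degree $5$ over $\beta\times\gamma$, getting a nonzero algebraic integer. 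Then for $u \in U - 1$: by Lemma~\ref{brough} or by Proposition~\ref{56connect} — but index is $5$, so that proposition does not apply, which is exactly the point of this lemma — I would instead use Lemma~\ref{dpnv} with $A = U$, $B = V$: $u \in \V G \iff uV \in \V{G/V}$, and $G/V \cong U\rtimes\langle t\rangle$ is a Frobenius group with abelian kernel $U$, so $\N{G/V} = U$ by Remark~\ref{examples}, giving $u \notin \V G$. Symmetrically $V \subseteq \N G$. Combined with the two displays above, $\N G = U \cup V$.

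**The proportion computation.** Once $\N G = U\cup V$ is established, since $U \cap V = 1$ we have $|\N G| = |U| + |V| - 1$. The action of $\langle t\rangle$ of order $5$ on $U$ being irreducible and nontrivial over $\FF_2$ forces $|U| = 2^d$ where $d$ is the multiplicative order of $2$ mod $5$, i.e. $d = 4$, so $|U| = 16$; likewise $|V| = 3^e$ with $e$ the order of $3$ mod $5$, i.e. $e = 4$, so $|V| = 81$. Hence $|G| = 16\cdot 81\cdot 5$ and $|\V G| = |G| - (16 + 81 - 1) = |G| - 96$, giving $\P G = 1 - \tfrac{96}{16\cdot 81\cdot 5} = 1 - \tfrac{96}{6480} = 1 - \tfrac{2}{135} = \tfrac{133}{135}$, and $\tfrac{133}{135} \simeq 0.985 > \mathfrak a$.

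**Main obstacle.** The delicate step is showing that the nontrivial elements of $(U\times V) - (U\cup V)$ are vanishing: a naive root-of-unity count via Lemma~\ref{vs} fails because $5 = 2 + 3$, so I must use the specific structure of the $\langle t\rangle$-action — namely that $\langle t \rangle$ permutes the relevant characters in a single regular orbit and the character sum is an honest Gauss sum (product of a nontrivial additive character sum over $\FF_{16}$ and one over $\FF_{81}$, each nonzero) rather than an arbitrary vanishing sum. Getting the orbit structure and nonvanishing of this sum cleanly, rather than by brute force, is where the real work lies; everything else is bookkeeping with the lemmas already in hand.
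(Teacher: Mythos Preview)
Your structural analysis is correct: the sizes $|U|=16$, $|V|=81$, the Frobenius structure, and the final arithmetic are all right. Your arguments for $G-(U\times V)\subseteq\V G$ and for $U\cup V\subseteq\N G$ work (though for the former you could simply apply Lemma~\ref{sv}(1) with $N=G$, since $\fitt G=U\times V$ and $G/\fitt G$ is cyclic of order~$5$; the paper does this, and uses Lemma~\ref{brough} directly for the latter).

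The genuine gap is in the step you yourself flag as the main obstacle, and your reasoning there has the direction reversed. You want to show that $uv\in\V G$ for $1\neq u\in U$, $1\neq v\in V$; that means you must \emph{produce} a character $\alpha=\beta\times\gamma\in\irr{U\times V}$ with $\alpha^G(uv)=\sum_{j=0}^4\beta^{x^j}(u)\gamma^{x^j}(v)=0$. Your discussion instead argues that such a sum is a nonvanishing Gauss-type product, which would at best show that \emph{some particular} $\alpha$ does not vanish on $uv$ --- the opposite of what is needed. Lemma~\ref{vs} is irrelevant here: it is an obstruction to vanishing, not a tool for producing zeros.

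What the paper does, and what your sketch is missing, is an explicit construction. Since $u u^x u^{x^2} u^{x^3} u^{x^4}=1$ (this element lies in $\cent UH=1$), one has $U=\la u\ra\times\la u^x\ra\times\la u^{x^2}\ra\times\la u^{x^4}\ra$, so $\beta\in\irr U$ can be chosen with $\beta^{x^j}(u)$ taking \emph{any} prescribed values in $\{\pm1\}$ for $j\in\{0,1,2,4\}$, with $\beta^{x^3}(u)$ then determined. Similarly for $\gamma$ on $V$. One then picks the two value patterns so that the five products are $-1,-\zeta_3,-\zeta_3^2,1,-1$, which sum to zero. This explicit choice is the missing idea; without it the argument does not close.
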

\begin{proof}
  We first observe that the assumptions imply that $G$ is solvable and that $U$ and $V$ are elementary abelian groups. 
  Let  $H = \la x \ra$ be a Sylow $5$-subgroup of $G$. Since $U$ and $V$ are nontrivial irreducible $H$-modules and $|H| = 5$,
  then by~\cite[II.3.10]{huppertI}  $|U| = 2^4$, $|V| = 3^4$ and $G$ is a Frobenius group with kernel $U\times V$.

  For any nontrivial element $u\in U$, the irreducibility of $U$ as $\la x \ra$-module  yields
  $U = \la u^{x^i} \mid i = 0, \ldots, 4 \ra$.
  As $u u^x u^{x^2}u^{x^3}u^{x^4} = 1$, being clearly an element of the trivial subgroup $\cent UH$,
  we see that $U = \la u \ra \times \la u^x \ra \times \la u^{x^2}\ra  \times \la u^{x^4}\ra $.
Similarly, for $1\neq v\in V$, $V = \la v \ra \times \la v^x \ra \times \la v^{x^2}\ra  \times \la v^{x^4}\ra$.
So, there exist characters  $\beta\in \irr{U}$ and $\gamma\in\irr{V}$ such that 
$\beta(u)=\beta^{x}(u)=\beta^{x^2}(u)=\beta^{x^4}(u)=-1$,  $\beta^{x^3}(u)=1$, and  
$\gamma(v)=\gamma^{x^3}(v)=\gamma^{x^4}(v)=1$,  $\gamma^x(v)=\zeta_3$, $\gamma^{x^2}(v)=\zeta_3^2$.
Let $\alpha = \beta \times  \gamma \in\irr{U \times V}$.
Then,  
\[
\alpha^G(uv)=\sum\limits_{j=0}^4\alpha^{x^j}(uv)= \sum\limits_{j=0}^4\beta^{x^j}(u)\gamma^{x^j}(v)=0.
\]
Hence, by Lemma~\ref{ind}  $uv\in \V G$ for all $1\neq u\in U$ and $1\neq v\in V$.
By part (1) of Lemma~\ref{sv} $\N G \subseteq \fitt G = U \times V$, so we deduce that $\N G \subseteq  U \cup V$.
On the other hand, by Lemma~\ref{brough}, $U, V \subseteq \N G$ and hence $\N G = U \cup V$ 
and  $\P{G}=\frac{133}{135} > \mathfrak{a}$.
\end{proof}
In the rest of the paper, we will use the so-called \emph{bar convention}: if $N$ is a normal subgroup of $G$, setting
$\o G = G/N$, we denote by $\o X$ the image of a subset, or a subgroup,  $X$ of $G$ under the natural projection of $G$ onto $\o G$.
We  recall a couple of basic facts about coprime action of groups.
\begin{lem}\label{ca}
  Let $p$ be a prime number and let  $P$ be a $p$-group acting via automorphisms on a group $G$.
  \begin{description}
  \item[(1)] Let  $N$ be a  $P$-invariant normal subgroup of $G$ and let $\o G = G/N$. If $|N|$ is coprime to $p$, then
    $\cent{\o G}P = \o{\cent GP}$. 
      \item[(2)] If $G$ is abelian and $|G|$ is coprime to $p$, then $G = [G,P] \times \cent GP$.
  \end{description}
\end{lem}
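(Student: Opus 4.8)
The statement to prove is Lemma~\ref{ca}, the standard pair of facts about coprime action. Here is my plan.

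\medskip

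For part (1), the key input is the \emph{coprime action fixed-point lemma}: if a $p$-group $P$ acts on a group $K$ of order coprime to $p$, then $\cent{K/[K,P]}{P}$ is trivial, equivalently $K = [K,P]\cent{K}{P}$ (this is, for instance, in Isaacs' book or Gorenstein). I would apply this machinery as follows. The inclusion $\o{\cent GP} \subseteq \cent{\o G}P$ is immediate since the projection is $P$-equivariant. For the reverse inclusion, take $gN \in \cent{\o G}P$, so $[g,x]\in N$ for all $x\in P$. The goal is to adjust $g$ by an element of $N$ to land in $\cent GP$. Consider the action of $P$ on the coset $gN$ (which is $P$-invariant as a set). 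Here I would invoke the standard result that a $p$-group acting on a coset $gN$, with $|N|$ coprime to $p$, has a fixed point on that coset — this is the ``Glauberman-type'' argument, or can be extracted from the cohomological vanishing $H^1(P,N)=1$ when $N$ is (nonabelian, but still) of order prime to $p$; the cleanest reference-free route is: $P$ acts on the set $gN$ of size $|N|$ coprime to $p$, and a transitive-ish orbit-counting argument together with the fact that the stabilizer structure forces a fixed point. Once we have $gn \in \cent GP$ with $n\in N$, we get $gN = (gn)N \in \o{\cent GP}$, completing the proof of (1).

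\medskip

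For part (2), with $G$ abelian and $|G|$ coprime to $p$, I would use the map $\gamma\colon G \to G$, $\gamma(a) = \prod_{x\in P}a^x$ (the ``trace'' or norm map), or more simply argue as follows. Since $G$ is abelian and the action is coprime, $G$ as a $\mathbb{Z}_p[P]$-module... no — better: the commutator map behaves well. Define $[G,P]$ and $\cent GP$ as usual. That $[G,P]\cap\cent GP = 1$: coprime action on an abelian group has no nontrivial fixed points on $[G,P]$ by the fixed-point lemma applied inside $[G,P]$ (note $[G,P]$ is $P$-invariant and $[[G,P],P]=[G,P]$ by coprimality, so $\cent{[G,P]}{P}=1$). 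That $G = [G,P]\cent GP$: apply the fixed-point lemma $G=[G,P]\cent GP$ directly. Since the product is direct by the intersection being trivial, we get $G = [G,P]\times\cent GP$.

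\medskip

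The main obstacle is really just citing or reproving the coprime fixed-point lemma cleanly for part (1) in the nonabelian-$N$ case, since the paper has not stated it. I expect the intended proof simply quotes a standard reference (Isaacs, \emph{Finite Group Theory}, or Gorenstein) for both parts, so the ``proof'' will be one or two lines of citation rather than a genuine argument; the content is entirely classical. If a self-contained argument is wanted, part (1) requires the coset-fixed-point statement and part (2) is a short deduction from $\cent{[G,P]}{P}=1$ plus $G=[G,P]\cent GP$.
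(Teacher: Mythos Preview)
Your proposal is correct, and you anticipated exactly what the paper does: the paper's proof is the single line ``(1) follows from~\cite[Theorem 3.27]{isaacs2008} and (2) is~\cite[Theorem 4.34]{isaacs2008}.'' Your sketch of the underlying arguments (the orbit-counting fixed-point argument on the coset $gN$ for (1), and $G=[G,P]\cent GP$ together with $\cent{[G,P]}P=1$ for (2)) is the standard content behind those citations, so there is nothing to add.
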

\begin{proof}
  (1) follows from~\cite[Theorem 3.27]{isaacs2008} and (2) is ~\cite[Theorem 4.34]{isaacs2008}.
\end{proof}

In the following, we denote by $\class(Q)$ the nilpotency class of a nilpotent group $Q$, by $\exp(Q)$ the exponent of $Q$ and by ${\bf Z}_2(Q)$ the second term of the upper central series of $Q$.
\begin{lem}\label{5/6}
  Let $A$ be an abelian normal $2$-subgroup of the group $G$ such that $[G:A] =6$, and let $Q\in\syl{2}{G}$ and $P\in\syl{3}{G}$.
  Then 
  \begin{description}
  \item[(1)]  There exists an element $y\in \norm{Q}{P}$ such that $y \not\in A$ and $y^2\in \cent{A}{P}$.
  \item[(2)] If $\cent AP = 1$ and either   $\exp(Q') \leq 2$   or $\class(Q) \leq 2$, then $A \subseteq \N G$. 
  \end{description}
Assume further that $P$ acts nontrivially on $A$ and that $Q$ is nonabelian. Then
  \begin{description}
  \item[(3)]  $G- A\subseteq \V G$. 
  \item[(4)]  If $\P G \leq \mathfrak{a}$, then $\cent AP \leq \z G$.
  \end{description}
\end{lem}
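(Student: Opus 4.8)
The plan is to prove parts (1)--(4) in order, using throughout that $A\le Q$ with $[Q:A]=2$, that $|P|=3$, and that $PA\trianglelefteq G$ (because $\overline{PA}$ is the normal Sylow $3$-subgroup of the order-$6$ group $\overline G=G/A$). For part (1), I would apply the Frattini argument to $PA\trianglelefteq G$ to get $G=A\,\norm GP$, and observe that inside $AP=A\rtimes P$ one has $\norm AP=\cent AP$: an $a\in A$ normalizes $P=\la p\ra$ precisely when $p$ centralizes $a$. Hence $[\norm GP:\cent AP]=[G:A]=6$, and since $Q\,\norm GP\supseteq QP=G$ (a cardinality count) we get $[\norm QP:\cent AP]=2$; as $\cent AP=\norm AP=\norm QP\cap A$ is normal in $\norm QP$, any $y\in\norm QP\setminus\cent AP$ satisfies $y\notin A$, $y^2\in\cent AP$, and, by an order count, $Q=A\la y\ra$.

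For part (2) we have $\cent AP=1$, so $y^2=1$, $Q=A\rtimes\la y\ra$, and by Lemma~\ref{pgroup} $Q'=[A,y]$ with $y$ acting as inversion on $[A,y]$; each of the two hypotheses then forces $[A,y]$ to be elementary abelian. Setting $H=\la P,y\ra$, which has order $6$ and meets $A$ trivially, gives $G=A\rtimes H$. By Lemma~\ref{ind} it is enough to show $\alpha^G(a)=|\cent Ha|\sum_{b\in a^H}\alpha(b)\ne0$ for every $a\in A$ and $\alpha\in\widehat A$. The $H$-orbit of $a$ has size $1$, $3$ or $6$ (size $2$ would force $a\in\cent AP=1$). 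If the size is $1$, then $a=1$ and the sum is $1$. If the size is $3$, the orbit is $\{a,a^p,a^{p^2}\}$ with $a\cdot a^p\cdot a^{p^2}\in\cent AP=1$, and three roots of unity in $\mathsf U_{2^n}$ of product $1$ cannot sum to $0$ (by Lemma~\ref{msu}(a) this would force $\zeta_3\in\mathsf U_{2^n}$). If the size is $6$, the orbit is $\{a^{p^i},a^{p^iy}\mid 0\le i\le2\}$, and with $\varepsilon_{i+1}=\alpha(a^{p^i})$, $\eta_{i+1}=\alpha(a^{p^iy})$ one has $\varepsilon_1\varepsilon_2\varepsilon_3=\eta_1\eta_2\eta_3=1$ and $\overline{\varepsilon_i}\,\eta_i=\alpha([a^{p^{i-1}},y])\in\alpha([A,y])\subseteq\mathsf U_2$, so Lemma~\ref{6sumsof2thunity}(1) gives the sum $\ne0$.

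For part (3): since $P$ acts nontrivially on $A$, it acts nontrivially on $\widehat A$, so some $\nu_0\in\widehat A$ has trivial $P$-stabilizer, i.e. $I_{AP}(\nu_0)=A$, whence $AP-A\subseteq\V G$ by Lemma~\ref{sv}(2). If $\overline G\cong S_3$, inflating its irreducible character of degree $2$, which vanishes on the transpositions, gives $G-AP\subseteq\V G$, hence $G-A\subseteq\V G$. If $\overline G\cong C_6$, then $[A,y]\ne1$ since $Q$ is nonabelian, so $\overline y$ acts nontrivially on $\widehat A$ too; the two proper subgroups $\cent{\widehat A}P$ and $\cent{\widehat A}{\overline y}$ cannot cover $\widehat A$, so some $\nu\in\widehat A$ has trivial $\overline G$-stabilizer, i.e. $I_G(\nu)=A$, and Lemma~\ref{sv}(2) with $M=G$ yields $G-A\subseteq\V G$. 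For part (4): by (3), $\N G\subseteq A$, so $\z G\subseteq A$; as $\z G$ consists of the elements of $A$ centralized by both $P$ and $y$, and $y$ stabilizes each factor of $A=[A,P]\times\cent AP$, we get $\z G=\cent{\cent AP}y$. Assuming $\cent AP\not\le\z G$, i.e. $[\cent AP,y]\ne1$: for any $a=a_1a_0$ with $a_1\in[A,P]$, $a_0\in\cent AP$, and $\alpha=1_{[A,P]}\times\alpha_0$, a short computation gives $\alpha^G(a)=3\bigl(\alpha_0(a_0)+\alpha_0(a_0^y)\bigr)$, which vanishes once $\alpha_0([a_0,y])=-1$, and such $\alpha_0$ exists whenever $[a_0,y]\ne1$. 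Hence $\V G$ contains $[A,P]\times\bigl(\cent AP\setminus\cent{\cent AP}y\bigr)$, a set of at least $|A|/2$ elements; together with (3) this gives $\P G\ge\frac{5|A|+|A|/2}{6|A|}=\frac{11}{12}>\mathfrak a$, contradicting $\P G\le\mathfrak a$. Therefore $\cent AP\le\z G$.

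The main obstacle is the orbit trichotomy in part (2): one has to pin down the three possible shapes of the $H$-orbit of $a$ and check that each falls exactly within the scope of Lemma~\ref{msu}(a) or Lemma~\ref{6sumsof2thunity}(1), with the elementary abelianness of $[A,y]$ being precisely what is needed to place $\Delta$ inside $\mathsf U_2$. Part (4) is then short — the crucial observation is that taking the first coordinate of $\alpha$ trivial turns a full half of $A$ into vanishing elements the moment $y$ fails to centralize $\cent AP$ — and the only delicate point there is the identification $\z G=\cent{\cent AP}y$, which depends on part (3).
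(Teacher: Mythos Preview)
Your proof is correct. Parts (1) and (2) follow the paper's argument essentially verbatim; your orbit-size trichotomy in (2) is just a reorganisation of the paper's direct application of Lemma~\ref{6sumsof2thunity}(1) to the six-term sum (the paper does not split into cases, since the lemma applies even when some of the six values coincide).

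Parts (3) and (4), however, take a different and somewhat more self-contained route than the paper. For (3) in the $C_6$ case, the paper invokes Lemma~\ref{l5} (via Brauer's permutation lemma) to get $Q-A\subseteq\V G$ and then part~(1) of Lemma~\ref{sv} to get $G-Q\subseteq\V G$; your ``two proper subgroups cannot cover a group'' argument on $\widehat A$ produces directly a character $\nu$ with $I_G(\nu)=A$, so a single application of Lemma~\ref{sv}(2) with $M=G$ suffices. For (4), the paper argues indirectly: it first shows $Q'\le[A,P]$ by passing to the quotient $G/P[A,P]$ and applying Lemma~\ref{vP}, and then deduces $[\cent AP,Q]\le[A,P]\cap\cent AP=1$. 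You instead construct explicit vanishing elements: choosing $\alpha$ trivial on $[A,P]$ collapses $\alpha^G(a_1a_0)$ to $3\alpha_0(a_0)\bigl(1+\alpha_0([a_0,y])\bigr)$, which you kill by picking $\alpha_0([a_0,y])=-1$. Both approaches land on the same estimate $\P G\ge\tfrac{11}{12}$; yours avoids the detour through the quotient and Lemma~\ref{vP}, while the paper's argument has the side benefit of establishing the structural fact $Q'\le[A,P]$.
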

\begin{proof}
  (1) Since $PA\unlhd G$, by Frattini's argument   $G=\norm{G}{P}A$,
  and hence $Q=\norm{Q}{P}A$ by Dedekind's lemma.
  Thus there exists an element $y\in \norm{Q}{P}- A$ and, consequently,  $y^2\in \norm AP = \cent{A}{P}$.
  
(2) 
By part (1), there exists an element $y \in \norm QP -A$ such that $y^2 \in \cent AP = 1$.
Write $P = \la x \ra$ and observe that $|P| = 3$ and that  $T = \{ x^i, x^iy \mid 1\leq i \leq 3\}$ is a transversal of $A$ in $G$. 
If $Q$ is abelian, then $A \subseteq \N G$ by Lemma~\ref{brough}.
So, we can assume that $Q$ is nonabelian and then, by part~(1) of Lemma~\ref{pgroup}, $y$ acts as the inversion on $Q'$. If  $\class(Q) = 2$, then $Q' \leq \cent Ay$
and hence $Q'$ is elementary abelian. 
Thus, in any case, we can assume that $Q'$ is elementary abelian. 

Let $a\in A$ and  $\alpha\in\irr{A}$ and write
$\varepsilon_1 = \alpha(a)$, $\varepsilon_2 = \alpha^x(a)$, $\varepsilon_3 = \alpha^{x^2}(a)$,
$\eta_1 = \alpha(a^y)$, $\eta_2 = \alpha^x(a^y)$, $\eta_3 = \alpha^{x^2}(a^y)$. 
The assumption $\cent AP = 1$ yields, for every $b \in A$, that $bb^xb^{x^2} = 1$ and hence $b^{x} = (bb^{x^2})^{-1}$.
We deduce that $\varepsilon_3 = \o{\varepsilon_1\varepsilon_2}$ and that $\eta_3 = \o{\eta_1\eta_2}$.
Writing $w=[a,y] \in Q'$, we recall that  $o(w)\leq 2$ and hence that both
$(\o{\varepsilon_1}\eta_1)^2 = \alpha(w^2)= 1$  and $(\o{\varepsilon_2}\eta_2)^2 = \alpha^x(w^2)=1$. 
Hence, by part (1) of Lemma~\ref{6sumsof2thunity} we have that 
$  \alpha^G(a)=\sum_{i=1}^3\varepsilon_i + \sum_{i=1}^3\eta_i  \neq 0$.
 Since this is true for every $a \in A$ and $\alpha \in \irr A$, Lemma~\ref{ind} yields $A\subseteq \N G$.

 \smallskip
 For the rest of the proof, we assume that $P$ acts nontrivially on $A$ and that $Q$ is nonabelian.

 (3)  $G/A$ is isomorphic either to the cyclic group $C_6$ or to the symmetric group $S_3$.
 If $G/A\cong C_6$, then  $Q\unlhd G$.
 As $Q$ is nonabelian and $[Q:A]=2$, we have $\z Q \leq A$; so, an application of Lemma~\ref{l5}  and of part (2) of Lemma~\ref{sv} 
  yields $Q-A\subseteq \V G$.
  Moreover, since  $P$ acts nontrivially on $Q$, $Q = \fitt G$ and by part (1) of Lemma \ref{sv} $G-Q\subseteq \V G$. 
  Thus, $G-A\subseteq \V G$.
  
  If $G/A\cong S_3$, then by lifting from $G/A$ we get $G-PA\subseteq \V G$.
  As $P$ acts nontrivially on $A$, then $A = \fitt{PA}$ and  $PA-A\subseteq \V G$ by part (1) of  Lemma~\ref{sv} .
  Thus $G-A\subseteq \V G$.

 (4) Let $y \in \norm QP -A$. Then $y$ normalizes $\cent AP$ and hence $\cent AP \unlhd G$. 
 Similarly, $[A,P] \unlhd G$.
 Now, we argue that $Q' \leq [A,P]$. Otherwise, writing $K = P[A,P]$, $G/K$ is a nonabelian $2$-group and 
 $\N G \subseteq Z$,
 where $Z/K = \z{G/K}$, by Lemma~\ref{vP}.
 As $\P G \leq \mathfrak{a}$, then $[G:Z] = 4$.
 But by part (3) $\N G \subseteq Z \cap A$, so 
 $\P G \geq \frac{11}{12}>\mathfrak{a}$, a contradiction.
 
 Thus, $Q' \leq [A,P]$ and  $[Q,\cent{A}{P}]\leq [A,P]\cap \cent{A}{P}=1$, where the last equality follows from part (2) of Lemma~\ref{ca}.
 So,  $\cent{A}{P}\leq \z{G}$.
\end{proof}

Given an abelian $p$-group $A$, we denote by $\rank(A)$ its \emph{rank} (i.e. the number of factors in a decomposition of $A$ as a direct product of cyclic groups) and, for $a \in A$, $k \in \mathbb{Z}$ and
$x \in \Aut(A)$, we shortly write $a^{kx}$ instead of $(a^k)^x$. 
For a positive integer $i$,   we define the characteristic subgroups 
$\Omega_i(A) = \{ a \in A \mid o(a) \text{ \ divides \ } p^i \}$  and
$A^i = \{a^i \mid a \in A\}$ of $A$.

In the rest of the section, we will address one of the most complicated cases that we have to face in the process of proving Theorem~A.
To avoid repetitions, we introduce the following setting, that will be used in the next three lemmas. 
\begin{hy}\label{hyp}
  Let $A$ be an abelian normal $2$-subgroup of the group $G$ such that $[G:A] = 6$ and assume that  $A$ has a complement $H$ in $G$.
  Let $P$ be the Sylow $3$-subgroup of $H$, $y$ an involution of $H$ and  $Q=A\la y\ra$ a Sylow $2$-subgroup of $G$.
  Assume that  $Q$ is nonabelian and that   $\cent AP =1$.
\end{hy}

\noindent
 {\bf Remark.}~~ If a cyclic group $P = \la x \ra$ of order $3$ acts on an abelian $2$-group $A$ and $\cent AP =1$, then     
$aa^xa^{x^2} = 1$  for every $a \in A$,  because $aa^xa^{x^2} \in \cent AP$, (a similar result is true for every Frobenius action: see~\cite[V.8.9(d)]{huppertI}).
Considering $A$ as a module under the action of $P$, 
if $B$ is a nontrivial $P$-invariant subgroup of $A$ and
$\rank(B) \leq 2$, then   $B$ is an indecomposable $P$-module and hence $B$ is homocyclic of rank $2$.
Moreover, 
$B$ is uniserial $P$-module whose only submodules are the subgroups $\Omega_i(B)$, $i \geq 0$,  and whose  composition factors are all isomorphic to $C_2 \times C_2$ (see, for instance, \cite{harris}).
As a consequence, for every $1 \neq a \in A$, the $P$-submodule $A_0$ generated by $a$ is   $A_0 = \la a, a^x\ra = \la a \ra \times \la a^x \ra$, a fact that will be used several times in the rest of the section.
Finally, we observe that $A$, being a direct sum of indecomposable $P$-modules, has even rank. 

In the next three lemmas, we collect  some  facts related to Setting~\ref{hyp}. 

\begin{lem}\label{XYtypeelementary}
  Assume Setting~\ref{hyp} and that $\P G \leq \mathfrak{a}$. Let $Z = \z Q$ and $P = \la x \ra$.
  Then
  \begin{description}
  \item[(1)]  If $H \cong C_6$, then $Z \nor G$ and $\exp(A/Z) = 2^{c-1}$, where  $c = \class(Q) \leq 3$.  
  \item[(2)]  If $H \cong C_6$ and $B$ is  a $P$-invariant subgroup of $A$ such that  $\rank(B) = 2$ and  $A_0 = B \times B^y$
    has index at most $4$ in $A$, then $\exp(B)=2$.
  \item[(3)] If  $H\cong S_3$,   $A=Z\times Z^x$ and $\rank(A) \leq 4$, then $\exp(A) \leq 4$ and  $Z$
  is not isomorphic to $(C_4)^2$.  
  \end{description}   
\end{lem}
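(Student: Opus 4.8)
\noindent\emph{Common set-up.} Since $\cent AP=1$ and $A\ne1$ (otherwise $Q=A\la y\ra$ would be abelian), $P$ acts nontrivially on $A$, and as $Q$ is nonabelian, part~(3) of Lemma~\ref{5/6} gives $G-A\subseteq\V G$. Hence $\N G\subseteq A$, and from $[G:A]=6$ and $\P G\le\mathfrak{a}=\frac{1067}{1260}$ we get
\[
  |\V G\cap A|\ \le\ \Bigl(\mathfrak{a}-\tfrac{5}{6}\Bigr)|G|\ =\ \frac{17}{210}\,|A|\ <\ \frac{1}{12}\,|A| .
\]
So in each part it suffices to assume the conclusion fails and to exhibit a set of vanishing elements inside $A$ of size greater than $\frac{17}{210}|A|$. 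The tool is a character computation, uniform in the two cases: using that $\cent AP=1$ forces $b\,b^{x}b^{x^{2}}=1$ for every $b\in A$, one gets, for $a\in A$ and $\alpha\in\irr A$,
\[
  \alpha^{G}(a)=\sum_{i=1}^{3}\alpha\bigl(a^{x^{i-1}}\bigr)+\sum_{i=1}^{3}\alpha\bigl((a^{y})^{x^{i-1}}\bigr),
\]
a sum of exactly the shape treated by Lemma~\ref{6sumsof2thunity}, with $\Delta=\{\alpha([a,y]),\alpha([a,y]^{x}),\alpha([a,y]^{x^{2}})\}$. By Lemma~\ref{ind}, $a\in\V G$ iff $\alpha^{G}(a)=0$ for some $\alpha$; by part~(1) of Lemma~\ref{6sumsof2thunity} this forces $o([a,y])\ge4$; and, conversely, since $\irr A$ separates points and (by the Remark) every rank-$2$ $P$-invariant subgroup $\la a,a^{x}\ra$ is homocyclic with freely prescribable characters, whenever $o(a)\ge4$ and $\la a,a^{x}\ra\cap\la a^{y},(a^{y})^{x}\ra=1$ one can choose $\alpha$ realizing the surviving configuration of Lemma~\ref{6sumsof2thunity}(2)--(3), forcing $\alpha^{G}(a)=0$, i.e.\ $a\in\V G$. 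Finally, for $z\in Z=\cent Ay$ one has $z^{y}=z$, so $\alpha^{G}(z)=2\bigl(\alpha(z)+\alpha(z^{x})+\alpha(z^{x^{2}})\bigr)\ne0$ by Lemma~\ref{vs}; thus $Z\subseteq\N G$.

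\noindent\emph{Part~(1).} As $H\cong C_{6}$, $Q$ is the preimage of the unique subgroup of order $2$ of $G/A$, so $Q\nor G$ and hence $Z=\z Q\nor G$; and $Z\le A$ because $Q$ is nonabelian. By Lemma~\ref{pgroup}(1), $A/Z$ and $Q'=[A,y]$ are isomorphic $\la y\ra$-modules with $y$ acting as inversion; since $[w,y]=w^{-2}$ for $w\in Q'\le A$, an easy induction gives $\gamma_{i}(Q)=(Q')^{2^{i-2}}$ for $i\ge2$, whence $\exp(A/Z)=\exp(Q')=2^{c-1}$ with $c=\class(Q)$. It remains to prove $\exp(Q')\le4$. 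Suppose $\exp(Q')\ge8$. Then $Q'$, which here is a $P$-module with $\cent{Q'}P=1$, has by the Remark an indecomposable $P$-summand of exponent $\ge8$, so $\{a\in A:o([a,y])\ge8\}$ has at least $\frac{1}{2}|A|$ elements; applying the construction above (with $\alpha$ of order $\ge8$ on $a$ and of order $4$ on two conjugates of $[a,y]$) one shows that all but an exceptional set of bounded relative density among these $a$ are vanishing, so $|\V G\cap A|>\frac{17}{210}|A|$, a contradiction. The configurations in which $\la a,a^{x}\ra\cap\la a^{y},(a^{y})^{x}\ra\ne1$ --- in the extreme case $A=\la a,a^{x}\ra$ has rank $2$ --- form a bounded family of explicit groups $Q=A\la y\ra$ with $P$ of order $3$ acting fixed-point-freely on $A$; there the fixed-point-free $C_{3}$-module structure of $A$ tightly constrains the possible actions of the involution $y$, and one again obtains too many vanishing elements when $\exp(Q')\ge8$. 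Hence $c\le3$.

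\noindent\emph{Parts~(2) and~(3).} These run along the same lines, now also invoking part~(1) --- so $o([a,y])\le4$ for every $a\in A$, placing us in the scope of parts~(2)--(3) of Lemma~\ref{6sumsof2thunity} --- together with the $P$-module structure of $A$ from the Remark. For~(2): as $A_{0}=B\times B^{y}$ is an internal direct product of index $\le4$ in $A$, $B\cap B^{y}=1$ (in particular $B\not\le Z$); combining this with $\gamma(B^{y})=\gamma(B)^{-1}$, where $\gamma(a)=[a,y]$, gives $\gamma(A_{0})=\gamma(B)$ of index $\le4$ in $Q'$, hence $\exp\bigl(B/\cent By\bigr)\le4$; if $\exp(B)\ge4$, applying the construction above to the elements of order $\ge4$ in $A_{0}$ (which number at least $\frac{1}{2}|A_{0}|\ge\frac{1}{8}|A|$) produces more than $\frac{17}{210}|A|$ vanishing elements, a contradiction. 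For~(3): since $H\cong S_{3}$, $y$ fixes $Z$ and interchanges $Z^{x}$ and $Z^{x^{2}}$, so $Z\cup Z^{x}\cup Z^{x^{2}}\subseteq\N G$; and as $A=Z\times Z^{x}$ with $\rank(A)\le4$ we have $\rank(Z)\in\{1,2\}$. In either case a count of the vanishing elements of $A$ --- using the generic vanishing principle above when $\rank(A)=4$, and a direct analysis of $Q=A\la y\ra$ with $A\cong(C_{2^{m}})^{2}$ and $P$ acting fixed-point-freely when $\rank(A)=2$ --- shows that $\exp(A)=\exp(Z)\ge8$, and likewise $Z\cong(C_{4})^{2}$ (i.e.\ $A\cong(C_{4})^{4}$), would force $|\V G\cap A|>\frac{17}{210}|A|$.

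\noindent The appeals to Lemma~\ref{6sumsof2thunity} are routine; the main obstacle is the bookkeeping needed to guarantee that the family of vanishing elements exhibited really has density larger than $\frac{17}{210}$ in $A$. This requires a careful case split according to the mutual position of $\la a,a^{x}\ra$ and $\la a^{y},(a^{y})^{x}\ra$ (respectively of $B$ and $B^{y}$, of $Z$, $Z^{x}$, $Z^{x^{2}}$), together with a hands-on treatment of the finitely many genuinely small configurations --- above all the exclusion of $Z\cong(C_{4})^{2}$ in part~(3), where the exponent bound is satisfied and one must count the vanishing elements of $A\cong(C_{4})^{4}$ precisely.
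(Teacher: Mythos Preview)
Your proposal is an outline, not a proof. You correctly identify the strategy --- exhibit enough vanishing elements inside $A$ to exceed the threshold $\frac{17}{210}|A|$ --- and you correctly note that Lemma~\ref{6sumsof2thunity} constrains the shapes of vanishing sums. But the central step, what you call the ``generic vanishing principle'', is asserted and not established: you claim that whenever $o(a)\ge4$ and $\la a,a^x\ra\cap\la a^y,a^{xy}\ra=1$ one can \emph{construct} an $\alpha\in\irr A$ with $\alpha^G(a)=0$, yet you never write down such an $\alpha$, and you never show that the set of $a$ failing your disjointness hypothesis is small enough. Your own final paragraph concedes that ``the bookkeeping'' and the ``hands-on treatment of the finitely many genuinely small configurations'' --- in particular the exclusion of $Z\cong(C_4)^2$ in part~(3) --- remain to be done. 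That bookkeeping is the entire content of the lemma.

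The paper's proof is not a uniform density argument but three separate explicit constructions, each simplified by passing to a well-chosen quotient. In part~(1) one works in $\o G=G/Z$, where the crucial gain is that $y$ now \emph{inverts} $\o A$; this turns the six-term sum into $\sum_i(\alpha^{x^i}(\o a)+\overline{\alpha^{x^i}(\o a)})$, and the single choice $\alpha(\o a)=\zeta_8$, $\alpha(\o a^x)=\zeta_8^5$ kills it for \emph{every} $\o a$ of order $\ge 8$, giving at once $|\V{\o G}\cap\o A|\ge\frac34|\o A|$. In part~(2) one first reduces to $\exp(B)=4$ by factoring out $A_0^4$, then shows that for $c\in A_0-W$ (with $W$ the explicit subgroup $\{b_1b_2^y:b_1^2=b_2^2\}$ of index~$4$) one has $A_0=\la c,c^x\ra\times\la c,c^x\ra^y$, which lets one define $\alpha=\lambda\times\mu^y$ with prescribed values on $c$. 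In part~(3) one reduces to a rank-$2$ quotient of exponent~$8$ and then treats $A\cong(C_4)^4$ directly, in each case writing down explicit sets $X,Y\subseteq A$ and explicit characters vanishing on them. None of these reductions or constructions appears in your write-up; without them, the appeals to Lemma~\ref{6sumsof2thunity} are not ``routine'' but vacuous, since that lemma tells you which configurations \emph{can} vanish, not how to realise them with a single character of $A$.
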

\begin{proof}
  (1) Assume that $H$ is cyclic of order $6$, and observe that  $Q \nor G$ implies $Z \nor G$.
  Write $\overline{G}=G/Z$. Since $Q$ is nonabelian, by part (1) of Lemma~\ref{pgroup}  $y$ acts as the inversion on $\overline{A}$ and
  this implies that  $2^{\class(Q)-1}=\exp(\overline{A})$.

 We have to prove that $\class(Q) \leq 3$. Working by contradiction, we assume $\class(Q) \geq 4$ , so $\exp(\overline{A})\geq 8$. Observe that, by part (1) of Lemma~\ref{ca}, $\cent{\o A}{\o P} = 1$ and that $\o Q$ is nonabelian. 
  Let $\overline{a} \in \overline{A}$ be  such that $o(\overline{a}) \geq 8$
  and let $\alpha\in\irr{\overline{A}}$ be  such that $\alpha(\overline{a})=\zeta_8$ and $\alpha(\overline{a}^x)=\zeta_8^{5}$: consider, for instance, an extension to $\overline{A}$ of a suitable character of $\la \overline{a}, \overline{a}^x \ra =
  \la \overline{a}\ra \times \la \overline{a}^x \ra$.
  Since $\o{a} \o{a}^x\o{a}^{x^2} = 1$, $\alpha(\o{a}^{x^2}) = (\alpha(\o{a}) \alpha(\o{a}^x))^{-1} = \zeta_8^2$.  

  As already mentioned, $\overline{a}^y= \overline{a}^{-1}$; so, we have 
  \[
  \begin{split}
    \alpha^G(\overline{a})&=\sum\limits_{0\leq i\leq 2}(\alpha^{x^i}(\overline{a})+\alpha^{x^iy}(\overline{a}))=
    \sum\limits_{0\leq i\leq 2}(\alpha^{x^i}(\overline{a})+\alpha^{x^i}(\overline{a}^{-1}))\\
               &=\zeta_8+\zeta_8^7+\zeta_8^2+\zeta_8^6+\zeta_8^5+\zeta_8^3=0.
  \end{split}  
\]
Hence, by Lemma~\ref{ind}, $\o a \in \V{\o G}$. We conclude that  $\overline{A} - \Omega_2(\overline{A}) \subseteq \V{\overline{G}}$ and, since $\o G - \o A \subseteq \V{\o G}$ by part~(3) of Lemma~\ref{5/6}, we get the contradiction
  \[
    \P{\overline G} \geq \frac{5}{6}+\frac{1}{6}\frac{|\overline{A}|-|\Omega_2(\overline{A})|}{|\overline{A}|}
    \geq  \frac{5}{6}+\frac{1}{6}\cdot\frac{3}{4} > \mathfrak{a}.
  \]
  Therefore,  $\class(Q) \leq 3$.
  
\smallskip
(2) We recall that, since $B$ is a $P$-invariant and $\rank(B) =2$, $B$ is homocyclic.  
We also remark that $A_0 \nor G$.
Working by contradiction and possibly 
replacing $G$ with $G/A_0^4$,   we can assume that  $\exp(A_0)= \exp(B) = 4$ (note that the assumption $\cent AP =1$ is preserved
by part (1) of Lemma~\ref{ca}).
Let $G_0 = A_0H$, $Q_0 = A_0\la y \ra$ and let 
$W=\{b_1b_2^y \mid b_1,b_2\in B, b_1^2=b_2^2\}$ (incidentally, we note that $W \leq A_0$; actually,
  $W = {\bf Z}_2(Q_0)$, since $Q_0$ is isomorphic to the wreath product $B \wr C_2$). 
  We claim that $A_0 - W \subseteq \V G$.
  Let $c \in A_0 - W$; so  there exist $b_0, b\in B$ such that $c=b_0b^y$ and $b_0^2 \neq b^2$. Observe  that  $o(c) = 4$ and let $C=\la c,c^x\ra$.
  If $C \cap C^y \neq 1$, then  $C^2 = (C^y)^2$, because this is the only nontrivial $P$-submodule of
  both $C$ and $C^y$, so $c^{2y}=c^{2x^i}$ for some $0\leq i\leq 2$.
  As $xy = yx$, one computes that
  $b^2(b_0^{2x^i})^{-1} = (b_0^{-2}b^{2x^i})^y \in B \cap B^y = 1$ and hence, using that $\cent Ax =1$, we get
  $i = 0$ and $b^2 = b_0^2$, a contradiction. 
  Therefore, $C \cap C^y = 1$ and, as $\rank(A_0) = 4$ and $o(c) = 4 = \exp(A_0)$, we have  $A_0=C\times C^y$.
  Consider $\lambda,\mu\in \irr{C}$ such that $\lambda(c)=\lambda(c^x)=-1$ and $\mu(c)=\zeta_4$, $\mu(c^x)=-\zeta_4$,
  and define $\gamma=\lambda\times \mu^y \in \irr{A_0}$. Then, observing that $cc^xc^{x^2} = 1$ gives $\lambda(c^{x^2}) = 1 = \mu(c^{x^2})$,
  \[
    \gamma^G(c)= \sum\limits_{0\leq i\leq 2}\gamma^{x^i}(c)+\sum\limits_{0\leq i\leq 2}\gamma^{x^i}(c^y) =
    \sum\limits_{0\leq i\leq 2}\lambda^{x^i}(c)+ \sum\limits_{0\leq i\leq 2}\mu^{x^i}(c)=0.
  \]
  So, by Lemma~\ref{ind}, we conclude that  $A_0 - W \subseteq \V{G_0}$ and hence $A_0 - W \subseteq \V G$ by Lemma~\ref{nonvanishingsmallgroup}.
  One can readily check that $|W| = |B||\Omega_1(B)|$ and hence $|W| = |A_0|/4$.
  Therefore,  we get the contradiction 
  $\P G\geq  \frac{5}{6}+\frac{1}{6}\frac{|A_0 -W|}{|A|}=\frac{83}{96} > \mathfrak{a}$.

  (3) Let  now $H \cong S_3$,   $A = Z \times Z^x$ and  $\rank(A) \leq 4$. 

We will first show that $e = \exp(A) \leq 4$. 
As  $\rank(A)\leq 4$, we can 
  write $Z= \la u \ra\times \la w\ra$ where $o(u)= e$ and $o(w) \geq 1$ (possibly, $w=1$).
  Then $A=U\times W$, where
  $U=\la u \ra\times \la u^x\ra $ and $W=\la w\ra\times \la w^x\ra$. As $u^{xy} = u^{yx^2}= u^{x^2} =(uu^x)^{-1}$, we have  $U \nor G$ and, similarly, $W \nor G$.
  Assume, working by contradiction, that  $ e \geq 8$ and consider the factor group  $\o G = G/U^8W$.
Then $\o G = \o A \rtimes \o H$ and $\o A=\la z\ra \times \la z^x\ra$, where $z = \o u$ and $o(z)=8$.
    Let $I=\{1, 2, 3, 5, 6, 7 \}$ and define
    $$X =\{z^kz^{lx}\mid k,l\in I \text{  and  } k-l \not\equiv 0 \pmod 4 \}.$$
  One  checks that  $|X|=24$ and that, for an element $z^kz^{lx} \in X$,  
  \[
   (z^kz^{lx})^H =  \{(z^kz^{lx})^h\mid h \in H \}=\{z^kz^{lx}, z^{-l}z^{(k-l)x},z^{l-k}z^{-kx},z^{k-l}z^{-lx},z^{-k}z^{(l-k)x},z^lz^{kx}\}.
  \]
By choosing a suitable representative of the conjugacy class $(z^kz^{lx})^H$, we can assume that both $k$ and $l$ are odd. 

  If $k\not\equiv -l \pmod 8$, we consider
  $\lambda\in \irr{\o A}$ such that $\lambda(z)=\zeta_8$ and $\lambda(z^x)=\zeta_8^{-1}$. Then
  \[
  \begin{split}
    \lambda^G(z^kz^{lx})&=\sum_{g\in(z^kz^{lx})^H}\lambda(g)=\zeta_8^{k-l}+\zeta_8^{-k}+\zeta_8^{l}+\zeta_8^{k}+\zeta_8^{-l}+\zeta_8^{l-k}=\sum_{i\in I}\zeta_8^i=0.
  \end{split}
  \]
  If $k \equiv -l \pmod 8$, we consider 
  $\mu\in\irr{\o A}$ such that $\mu(z^k)=\zeta_8$ and $\mu(z^{lx})=\zeta_8^5$. Then
  \[
    \begin{split}
    \mu^G(z^kz^{lx})&=\sum_{g\in(z^kz^{lx})^H}\mu(g)= \zeta_8^6+\zeta_8^7+\zeta_8^3+\zeta_8^5+\zeta_8+\zeta_8^2=0.
  \end{split}
  \]
  Hence,  $X\subseteq \V{\o G}$. 
  Observing that $\cent{\o A}{\o P} = 1$ by part (1) of Lemma~\ref{ca}, and that $\o Q$ is nonabelian as
  $(z^x)^y = z^{yx^{-1}} = z^{x^{-1}}\neq z^x$, 
part (3) of Lemma~\ref{5/6} yields  $\o{G}- \o{A}\subseteq \V{\o G}$.
So, we get  
  \[  
  \P{\o G} \geq \frac{5}{6}+\frac{1}{6}\frac{|X|}{|\o A|}= \frac{5}{6}+\frac{1}{6}\cdot\frac{24}{64}=\frac{43}{48} > \mathfrak{a},
\]
against $\P{\o G} \leq \P G \leq \mathfrak{a}$. Hence, $\exp(A) \leq 4$. 

\smallskip
We will now show that $Z \not\cong (C_4)^2$.
  Assume, working by contradiction, that  $Z\cong (C_4)^2$ and
  let
  $Y =\{zw^x\mid z,w\in Z ~\text{such that}~Z=\la z\ra\times \la w\ra\}$.
  One  readily checks that $|Y|=(|Z|-|Z^2|)(|Z|-|Z^2|-4)=96$.

  We claim that $Y \subseteq \V G$. Let $a \in Y$, and write $a = zw^x$ where $z,w \in Z$ are elements such that $Z=\la z\ra\times \la w\ra$.
  Let $V=\la z,z^x\ra$ and $W=\la w,w^x\ra$;
  so, $A = V \times W$.
  Consider $\lambda\in\irr{V}$ and $\mu\in\irr{W}$ such that $\lambda(z)=\lambda(z^x)=\zeta_4$ and $\mu(w)=\zeta_4$ and $\mu(w^x)=1$;  then $\lambda(z^{x^2})=-1$ and $\mu(w^{x^2})=-\zeta_4$, as $b b^xb^{x^2}=1$ for all $b \in A$.
  
  Let  $\alpha=\lambda\times \mu\in \irr{A}$.
  Since  $z,w\in Z=\z{Q}$ and $xy=yx^{2}$, we have $\{a^h\mid h\in H\}=\{ zw^x,z^xw^{x^2},z^{x^2}w,zw^{x^2},z^{x^2}w^x,z^xw\}$
  and then 
  \[
    \begin{split}
      \alpha^G(a)&=\alpha(zw^x)+\alpha(z^xw^{x^2})+\alpha(z^{x^2}w)+\alpha(zw^{x^2})
      +\alpha(z^{x^2}w^x)+\alpha(z^{x}w) = \\
      &\alpha(w)(\alpha(z^{x})+\alpha(z^{x^2}))+\alpha(w^x)(\alpha(z)+\alpha(z^{x^2}))+\alpha(w^{x^2})(\alpha(z)+\alpha(z^{x})) =\\
      &\zeta_4(\zeta_4-1)+(\zeta_4-1)+(-\zeta_4)(\zeta_4+\zeta_4)=0.
    \end{split}
  \]
  Hence,  $Y \subseteq \V G$ and, as also $G -A \subseteq \V G$ by part (3) of Lemma~\ref{5/6}, we get  $\P G\geq \frac{5}{6}+\frac{1}{6}\frac{|Y|}{|A|}=\frac{5}{6}+\frac{1}{6}\frac{96}{16^2}>\mathfrak{a}$, a contradiction.
\end{proof}
\begin{lem}\label{XYtypeclassification}
  Assume Setting~\ref{hyp} and that  $H$ is isomorphic to $C_6$.
  Suppose that $\rank(A) = 2$, $\exp(A) = 8$ and $\class(Q) > 2$.
  Then one of the following is true.
  \begin{description}
  \item[(1)] 
    $\N G = A^2$ and $\P G = \frac{23}{24} >\mathfrak{a}$.
  \item[(2)]
    $\N G = A$ and $\P G = \frac{5}{6}$.
  \end{description}
 \end{lem}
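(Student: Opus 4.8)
The plan is to pin down the structure of $G$, deduce the easy inclusions $A^2\subseteq\N G\subseteq A$, and then show that $\V G\cap(A\setminus A^2)$ is \emph{all or nothing}: either empty, which gives alternative~(1), or all of $A\setminus A^2$, which gives alternative~(2). The first two parts use only results already at hand; the third, which I expect to be the crux, I would obtain by producing enough automorphisms of $G$.

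\emph{Structure.} Since $A$ is a nontrivial $P$-invariant subgroup of rank at most $2$, the Remark following Setting~\ref{hyp} forces $A$ to be homocyclic, so $A\cong C_8\times C_8$ and $|G|=384$. Writing $Z=\z Q$, Lemma~\ref{XYtypeelementary}(1) gives $Z\nor G$, $\class(Q)\le3$ and $\exp(A/Z)=2^{\class(Q)-1}$; with $\class(Q)>2$ this yields $\class(Q)=3$ and $\exp(A/Z)=4$. Now $Z=\cent Ay$ is $P$-invariant (as $xy=yx$), hence equals one of $1,\Omega_1(A)=A^4,\Omega_2(A)=A^2,A$ by the Remark, and the exponent condition leaves only $Z=A^4$; then $|Z|=4$ and, by Lemma~\ref{pgroup}(1), $Q'=[A,y]\cong A/Z$ has order $16$, so $Q'=A^2$. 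I record, again from the Remark, that $A=\la a\ra\times\la a^x\ra$ for every $a$ of order $8$, and that $A\setminus A^2$ is exactly the set of elements of order $8$, of size $48$. For the inclusions: as $\cent AP=1$, $P$ acts nontrivially on $A$ and $Q$ is nonabelian, so Lemma~\ref{5/6}(3) gives $G-A\subseteq\V G$, i.e. $\N G\subseteq A$. Conversely, for $a\in A^2$ and $\alpha\in\irr A$, using the transversal $\{x^i,x^iy:0\le i\le2\}$ of $A$ in $G$ and $aa^xa^{x^2}=1$ I write $\alpha^G(a)=(\varepsilon_1+\varepsilon_2+\varepsilon_3)+(\eta_1+\eta_2+\eta_3)$ as in Lemma~\ref{6sumsof2thunity}, with $\varepsilon_i,\eta_i\in\mathsf{U}_4$ and each $\delta_i=\overline{\varepsilon_i}\eta_i$ a value of $\alpha$ on a conjugate of $[a,y]$; since $[a,y]\in[A,y]^2=(A^2)^2=A^4=\Omega_1(A)$, we get $\Delta\subseteq\mathsf{U}_2$, whence $\alpha^G(a)\ne0$ by Lemma~\ref{6sumsof2thunity}(1), and $a\in\N G$ by Lemma~\ref{ind}. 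Thus $A^2\subseteq\N G\subseteq A$.

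\emph{The dichotomy (the crux).} It remains to decide which elements of $A\setminus A^2$ are non-vanishing, and the point is that this is settled all at once. Let $R$ be the subring of $\operatorname{End}(A)$ generated by the action of $P$: it is commutative, $8=0$ in $R$, and $1+x+x^2=0$ in $R$ because $(1+x+x^2)A\subseteq\cent AP=1$, so $R$ is a quotient of the Galois ring $GR(8,2)$. Since $\exp(A)=8$ and $|A/A^2|=4$ equals the order of the residue field of $GR(8,2)$, Nakayama's lemma together with $|A|=64=|GR(8,2)|$ shows $A$ is a free $R$-module of rank one and $R\cong GR(8,2)$, a local ring with maximal ideal $2R$ and $A^2=2A$. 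Hence $R^\times=\Aut_R(A)$ acts by multiplication, transitively, on $A\setminus A^2=A\setminus2A$. Because $A$ is free of rank one over $R$, every endomorphism of $A$ commuting with the action of $x$ lies in $R$; in particular so does the action of $y$ (as $xy=yx$), so each $\phi\in R^\times$ commutes with the actions of both $x$ and $y$ and therefore extends to an automorphism $\Phi$ of $G=A\rtimes H$ fixing $H$ pointwise. As $\V G$ is characteristic in $G$ and $\Phi(A)=A$, we get $a\in\V G\iff b\in\V G$ for all $a,b\in A\setminus A^2$; so $\V G\cap(A\setminus A^2)$ is $\emptyset$ or $A\setminus A^2$.

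\emph{Conclusion.} If $A\setminus A^2\subseteq\V G$ then $\N G=A^2$ and $\P G=\frac{384-16}{384}=\frac{23}{24}>\mathfrak{a}$, which is~(1); if $A\setminus A^2\subseteq\N G$ then $\N G=A$ and $\P G=\frac{384-64}{384}=\frac56$, which is~(2). The main obstacle is the dichotomy step — realising that $A$ is a free rank-one module over $GR(8,2)$ and, crucially, that its module automorphisms extend to automorphisms of $G$ because the action of $y$ is $R$-linear too. The structural part carries most of the bookkeeping but merely recombines available results, and the inclusion $A^2\subseteq\N G$ is a direct call to Lemma~\ref{6sumsof2thunity}(1).
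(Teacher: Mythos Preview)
Your proof is correct and takes a genuinely different route from the paper's. The paper proceeds by fixing $a\in A$ of order $8$, observing that $aa^y\in Z=\{1,a^4,a^{4x},a^{4x^2}\}$, and then splitting into cases: when $aa^y\in\{1,a^4\}$ it exhibits an explicit $\alpha\in\irr A$ with $\alpha^G(a)=0$ and checks that the same relation $b^y=b^k$ (with $k\in\{-1,3\}$) holds for every $b\in A$, forcing $\N G=A^2$; when $aa^y\in\{a^{4x},a^{4x^2}\}$ it shows by a direct computation with roots of unity that $\alpha^G(a)\ne0$ for all $\alpha$, and again propagates this via the $P$-module endomorphism $b\mapsto bb^y$. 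Your argument replaces this case analysis entirely by the observation that $A$ is free of rank one over the Galois ring $R\cong GR(8,2)$, so that $R^\times$ acts transitively on $A\setminus A^2$ and---crucially---each unit extends to an automorphism of $G$ because the action of $y$ is itself $R$-linear (as $R$ is commutative and $y$ commutes with $x$). This is more conceptual and avoids any character computation; the price is that, unlike the paper's proof, it does not tell you \emph{which} of the two alternatives holds for a given $G$. One small wrinkle: you invoke Lemma~\ref{XYtypeelementary}(1) for $\class(Q)\le 3$, but that lemma assumes $\P G\le\mathfrak{a}$, which is not a hypothesis here; however this is harmless, since $Z=\z Q\ne 1$ (center of a $2$-group) together with the uniseriality of $A$ as $P$-module already forces $Z\in\{A^4,A^2,A\}$, and $\class(Q)>2$ then leaves only $Z=A^4$. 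Also, in your opening plan you have the labels of the two alternatives swapped (empty $\V G\cap(A\setminus A^2)$ gives $\N G=A$, i.e.\ alternative~(2)); your Conclusion paragraph gets them right.
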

 \begin{proof}
   We observe that  $Q \nor G$ and then both $Q'$ and $Z = \z Q$  are $P$-submodules of
   the uniserial $P$-module $A \cong C_8 \times C_8$. Since $\class(Q) > 2$, $Q'$ is not contained in $Z$ and 
   hence $Z = \Omega_1(A)$ and $Q' = A^2$.
   Since $\class(A^2\la y \ra) = 2$,  part (2) of Lemma~\ref{5/6} applied to the subgroup $A^2H$ yields  $A^2 \subseteq \N{A^2H}$
   and hence $A^2 \subseteq \N G$ by Lemma~\ref{nonvanishingsmallgroup}.
   We also recall that  $\N G \subseteq A$ by  part (3) of Lemma~\ref{5/6}. 
Let  $a \in A$ be such that $o(a) = 8$; by the Frobenius action of $P = \la x \ra$ on $A$, we have   $A=\la a\ra \times \la a^x\ra$. 
Note that  $aa^y \in  \cent Ay = Z = \{1,  a^4,a^{4x}, a^{4x^{2}}\}$.
We distinguish two cases.

\smallskip
(1)  Assume first that $aa^y \in \{1, a^4\}$, so  $a^y=a^k$ with $k \in \{-1, 3\}$.
Let $\alpha\in\irr{A}$ be such that $\alpha(a)=\zeta_8$ and $\alpha(a^x)=\zeta_8^5$;
so, $\alpha(a^{x^2}) = (\alpha(a)\alpha(a^x))^{-1} =\zeta_8^2$.
  Hence, 
  \[
  \alpha^G(a)=\sum\limits_{0\leq i\leq 2}(\alpha^{x^i}(a)+\alpha^{x^iy}(a))=\sum\limits_{0\leq i\leq 2}(\alpha^{x^i}(a)+\alpha^{x^i}(a^k))=\sum\limits_{j \in \{1,2,3,5,6,7\} }\zeta_8^j=0,   
  \]
  so $a\in \V G$ by Lemma~\ref{ind}.
  Since $A=\la a,a^x\ra$ and $y$ commutes with $x$,  $a^y=a^k$ implies that  $b^y=b^k$ for every element
  $b \in A$. Thus, the previous argument shows that  $b \in \V G$ for all $b \in A$ such that $o(b) = 8$.
  So, $ \N G = \N G \cap A \subseteq A^2$ and hence   $\N G = A^2$ and $\P G = \frac{23}{24}$. 
  
  \smallskip
  (2)  Assume now that $aa^y \in \{a^{4x},   a^{4x^2}\}$; up to interchanging $x$ and $x^2 =x^{-1}$, we
  can assume that $aa^y = a^{4x}$. We claim that $a \in \N G$. 
  Arguing by contradiction,   by Lemma~\ref{ind} there exists a character $\alpha \in \irr A$ such that
  $\alpha^G(a) = 0$.
  Let $K = \ker (\alpha)$ and observe that $Z \not\leq K$: otherwise, writing $\overline{G} = G/Z$,
  $\alpha \in \irr{\overline A}$ and $\alpha^{\o G}(\o a) =0$, 
  so $\overline{a} \in \V{\overline{G}} \cap \overline{A}$, against part (2) of Lemma~\ref{5/6} since $\class(\overline{Q}) = 2$.  
  Hence, as $a^4a^{4x}a^{4x^2} = 1$ and $\{\alpha(a^{4x^i}) \mid 0 \leq i \leq 2 \} = \{1, -1\}$, up to possibly exchanging $\alpha$ with one conjugate character $\alpha^{x^i}$
(observing that $\alpha^G = (\alpha^{x^i})^G$ for every $i$), 
 we can assume $\alpha(a^{4x}) =1$ and $\alpha(a^4)=\alpha(a^{4x^2}) = -1$.
  Recalling that $a^y=a^{4x}a^{-1}$, we have that $\alpha^{x^i}(a^y)=\alpha^{x^i}(a^{4x})\o {\alpha^{x^i}(a)} = \alpha(a^{4x^{1-i}})\o{\alpha^{x^i}(a)}$ for $0\leq i\leq 2$ and then 
    \[
      \begin{split}
        \alpha^G(a) &=\alpha(a) + \alpha^x(a) + \alpha^{x^2}(a) + \alpha(a^y) + \alpha^x(a^y) + \alpha^{x^2}(a^y) \\ 
        &= \alpha(a) + \alpha^x(a) + \alpha^{x^2}(a) +  \overline{\alpha(a)} -\overline{\alpha^x(a)} - \o {\alpha^{x^2}(a)}
      \end{split}  
    \]
Hence, the assumption $\alpha^G(a) = 0$
gives
$$\alpha(a) +\overline{\alpha(a)} = \overline{\alpha^x(a)} - \alpha^x(a) + \overline{\alpha^{x^2}(a)} - \alpha^{x^2}(a).$$
Observing that the first member of the above equality is real, while the second member is purely imaginary, we deduce
that $\overline{\alpha(a)} = -\alpha(a)$ and we get  $\alpha(a)^4 = \alpha(a^4) = 1$, against the assumption $\alpha(a^4)= -1$.
Hence, $a \in \N G$.
Since $A=\la a,a^x\ra$ and $y$ commutes with $x$, the map $\varphi: A \rightarrow A$ such that, for $b \in A$,  $\varphi(b) = bb^y$, is a morphism of $P$-modules. Hence,  we have $bb^y = (b^4)^x$ for every $b \in A$ and then by the previous argument we deduce that $A - A^2 \subseteq \N G$.
Recalling that $A^2  \subseteq \N G \subseteq A$,  we conclude that
$A = \N G$ and $\P G = \frac{5}{6}$.
\end{proof}

\begin{lem}\label{Ytypevanishelement}
  Assume the setting~\ref{hyp} and that $H$ is isomorphic to  $C_6$. 
Let $A = CD$,  where  $C$ is a  $P$-invariant subgroup of rank $2$, $D = [C,y]$ and $C \cap D = D^2 = \Omega_1(D)$.  
If $e = \exp(A) \geq 8$, then one of the following is true.
\begin{description}
\item[(1)] $a^{\frac{e}{2}} = [a^2,y]$ for every $a \in A$, $C - C^2 \subseteq \V G$ and $\P G > \mathfrak{a}$.
  \item[(2)] For a suitable generator $x$ of $P$,
$a^{\frac{e}{2}} = [a^2,y]^x$ for every $a \in A$,  $A = \N G$ and $\P G = \frac{5}{6}$.
\end{description} 
\end{lem}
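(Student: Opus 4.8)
Here is the approach I would take.

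\medskip

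\emph{Overall strategy and Step 1 (structure).} The plan is to first pin down $Q$, $A$ and $Z=\z Q$ from Setting~\ref{hyp}, $\class(Q)>2$, $e\ge 8$ and $C\cap D=D^2=\Omega_1(D)$; then to extract a single ``twist invariant'' $j\in\{0,1,2\}$ governing the dichotomy; and finally to convert the value of $j$ into the asserted information on $\V G$ and $\P G$. Since $H\cong C_6$ we have $Q\nor G$, hence $Z\nor G$, and Lemma~\ref{XYtypeelementary}(1) gives $\class(Q)=3$ and $\exp(A/Z)=4$. Writing a general $a\in A=CD$ as $a=cd$ with $c\in C$, $d\in D$, and using that $y$ inverts $Q'=[A,y]$ (Lemma~\ref{pgroup}(1)), one computes $[a,y]=[c,y]\,d^{-2}\in D$, so $Q'=[A,y]=D$. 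Now $D=[C,y]$ is a homomorphic image of the rank-two group $C$, hence a nontrivial $P$-invariant subgroup of rank at most $2$, so by the Remark preceding the lemma it is homocyclic of rank $2$; with $D^2=\Omega_1(D)$ this forces $D\cong C_4\times C_4$. Consequently $|A|=|C|\,|D|/|C\cap D|=4e^2$, and as $A$ has even rank and contains $C\cong C_e^2$ we get $A\cong C_e^2\times C_2^2$ with a $P$-module decomposition $A=A_1\oplus A_2$, $A_1\cong C_e^2$, $A_2\cong C_2^2$; finally $A/Z\cong Q'=D\cong C_4^2$, so $[A:Z]=16$ and $A^4\le Z$.

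\emph{Step 2 (the twist invariant $j$).} The key point is that $D^2=C\cap D$ lies in $C$ and is elementary abelian, whence $D^2\le\Omega_1(C)=C^{e/2}=A^{e/2}$, and comparing orders gives $D^2=A^{e/2}$. I would then consider the two homomorphisms $f,g\colon A\to A^{e/2}$ with $f(a)=a^{e/2}$ and $g(a)=[a^2,y]=[a,y]^2$. Both are $P$-equivariant (using $xy=yx$) and onto $A^{e/2}=D^2$, and they have the same kernel: $\ker f$ is the set of elements of order at most $e/2$, namely $A^2A_2$, and $[A^2A_2,y]\subseteq D^2\cdot\Omega_1(D)=\Omega_1(D)$ shows $A^2A_2\subseteq\ker g$, while $\ker g=\{a:a^2\in Z\}$ has index $|\Omega_1(A/Z)|=4=[A:\ker f]$ in $A$, so the two kernels coincide. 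Hence $f$ and $g$ induce $P$-isomorphisms of $A/\ker f$ onto $A^{e/2}$, two copies of the irreducible $\mathbb F_2P$-module $\mathbb F_4$ (here $\cent AP=1$ identifies $A^{e/2}$ with that module and makes $P$ realize $\mathbb F_4^{\times}$ on it); so $f$ and $g$ differ by an element of $\mathbb F_4^{\times}=\langle\,\text{action of }x\,\rangle$, i.e. there is a unique $j\in\{0,1,2\}$ with $a^{e/2}=\bigl([a^2,y]\bigr)^{x^j}$ for all $a\in A$. If $j=0$ we aim for~(1); if $j\in\{1,2\}$, after replacing $x$ by $x^{-1}$ if necessary we may take $j=1$ and aim for~(2).

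\emph{Step 3 (counting).} For $c\in C$ with $o(c)=e$ one has $C=\langle c\rangle\times\langle c^x\rangle$, $c^{x^2}=(cc^x)^{-1}$, $c^y=c[c,y]$ with $[c,y]\in D-D^2$, and, by Step~2, $[c,y]^2=c^{e/2}$ if $j=0$ while $[c^x,y]^2=c^{e/2}$ if $j=1$. For $\alpha\in\irr A$ one computes
\[
\alpha^G(c)=\sum_{i=0}^{2}\alpha(c^{x^i})+\sum_{i=0}^{2}\alpha(c^{x^i})\,\alpha\bigl([c,y]^{x^i}\bigr),
\]
which has the shape $\sum\varepsilon_i+\sum\eta_i$ with $\prod\varepsilon_i=\prod\eta_i=1$ and $\overline{\varepsilon_i}\eta_i=\alpha([c,y]^{x^i})\in\mathsf U_4$; moreover $\alpha([c,y]^{x^i})^2=\alpha(c^{x^i})^{e/2}$ if $j=0$, but the \emph{shifted} relation $\alpha([c,y]^{x^i})^2=\alpha(c^{x^{i-1}})^{e/2}$ holds if $j=1$. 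When $j=0$, choosing $\alpha$ with $\alpha(c^x)=-\alpha(c)$ of order at least $8$ and $\alpha([c,y])=\alpha([c^x,y])$ makes this sum vanish, so $C-C^2\subseteq\V G$ by Lemma~\ref{ind}; with $\N G\subseteq A$ (Lemma~\ref{5/6}(3)) and $|C-C^2|=\tfrac34|C|=\tfrac3{16}|A|$ this yields $\P G\ge\tfrac56+\tfrac16\cdot\tfrac3{16}=\tfrac{83}{96}>\mathfrak a$. When $j=1$, every vanishing configuration permitted by Lemma~\ref{6sumsof2thunity} (parts~(2) and~(3)) would, via the shifted relation, force $\varepsilon_i=-\varepsilon_k$ for two indices whose $\varepsilon$'s have incompatible orders, which is impossible; hence $\alpha^G(a)\ne 0$ for all $a\in A$ and all $\alpha\in\irr A$, so $A\subseteq\N G$, $\N G=A$, and $\P G=1-\tfrac1{[G:A]}=\tfrac56$.

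\emph{Main obstacle.} The hard part will be the case $j=1$ of Step~3: one must rule out vanishing of the six-term sum $\alpha^G(a)$ for \emph{every} $a\in A$ (not only for generators of $C$) and \emph{every} $\alpha\in\irr A$, i.e. run through the configurations of Lemma~\ref{6sumsof2thunity} and check, case by case, that the shift $\delta_i^2=\alpha(c^{x^{i-1}})^{e/2}$ is incompatible with the order restrictions each configuration imposes. Steps~1 and~2 demand care with the $P$-module structure of $A$ but are otherwise routine bookkeeping.
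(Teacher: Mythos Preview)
Your approach is essentially the paper's, and the outline is sound. Two points deserve comment.

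First, a citation issue in Step~1: Lemma~\ref{XYtypeelementary}(1) assumes $\P G\le\mathfrak a$, which is \emph{not} a hypothesis of the present lemma. Fortunately you do not actually need it: the identity $\exp(A/Z)=2^{\class(Q)-1}$ follows from Lemma~\ref{pgroup}(1) alone, and once you have shown $Q'=[A,y]=D\cong(C_4)^2$ you get $A/Z\cong D$ and $\class(Q)=3$ directly from the hypotheses $D^2=\Omega_1(D)$ and $D\neq 1$. So simply drop the reference and keep the direct derivation you already give.

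Second, your Step~2 is a pleasant variant of the paper's argument. The paper picks a generator $a$ of $C$ (hence of $A$ as $H$-module), observes that $a^{e/2}$ lies in the four-element set $D^2=\{1,[a,y]^2,[a,y]^{2x},[a,y]^{2x^2}\}$, and then extends the resulting relation to all of $A$ because both sides are $H$-module endomorphisms agreeing on a generator. Your Schur's-lemma argument on the irreducible $\mathbb F_2P$-module $A^{e/2}$ reaches the same conclusion more conceptually.

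For Step~3 with $j=1$, your outline is on the right track but two details need adjusting. You should invoke parts~(1) and~(3) of Lemma~\ref{6sumsof2thunity}, not~(2) and~(3): part~(2) assumes all roots lie in $\mathsf U_4$, which fails here since some $\varepsilon_i$ has order $e\ge 8$. And the final contradiction is not that two $\varepsilon$'s have ``incompatible orders''; rather, after reducing (via part~(3)) to $\delta_0=\delta_1=\omega$, $\delta_2=-1$ and deducing $\varepsilon_1=-\varepsilon_0$, the shifted relation $\delta_i^2=\varepsilon_{i+1}^{\,e/2}$ (indices mod~$3$) forces $\varepsilon_0^{\,e/2}$ to equal both $-1$ (from $\delta_0^2$, using $\varepsilon_1=-\varepsilon_0$ and $e/2$ even) and $1$ (from $\delta_2^2$). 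That is the contradiction the paper obtains.
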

\begin{proof}
As $D^2 = \Omega_1(D)$ and $D = [C,y] = [A,y] \neq 1$, we have that $\exp(D) = 4$ and, since $D = [C,y]$ has rank at most two and is $P$-invariant, we conclude that   $D \cong(C_4)^2$.
  We also observe that  $A=CC^y$ and that $e = \exp(C)$. Write $P = \la x \ra$.
  Let $a \in C$ be such that $o(a) = e$. Then $C=\la a\ra \times \la a^x\ra$,  $D=\la [a,y]\ra \times \la [a,y]^x\ra$ and $a$ is a generator of
  $A$ as a $H$-module. 
  As  $1 \neq a^{\frac{e}{2}}\in \Omega_1(C) = D^2$,  we have $a^{\frac{e}{2}}=([a,y]^{x^k})^2 = [a^2,y]^{x^k}$ for some $k \in \{0,1,-1 \}$.
  We remark that then $g^{\frac{e}{2}}=[g^2,y]^{x^k}$ for all $g\in A$, 
 because the two maps  $g \mapsto [g^2, y]^{x^k}$ and $g \mapsto g^{\frac{e}{2}}$ are (by part (1) of Lemma~\ref{pgroup} and $y \in \z H$)  endomorphisms of the $H$-module  $A$  and they coincide on the generator $a$.
We distinguish two cases. 

\smallskip
(1) Assume that $a^{\frac{e}{2}}=[a,y]^2$  and consider characters  $\gamma \in\irr{C}$ and $\delta \in \irr{D}$ such that $\delta([a,y])=\delta^x([a,y])=\zeta_4$ and $\gamma(a)=\zeta_{e}$, $\gamma^x(a)=-\zeta_{e}$.
  Since $aa^xa^{x^2}=1$, it follows that $\delta^{x^2}([a,y])=-1$ and $\gamma^{x^2}(a)=-\zeta_{e}^{-2}$.
 As $e \geq 8$, $\delta$ and $\gamma$ coincide on the subgroup $C \cap D$, and it follows (see, for instance, \cite[Problem 4.4]{isaacs1976})
 that  there exists a character  $\alpha\in \irr{A}$  that extends both $\gamma$ and $\delta$. 
  Thus, one computes that
   \[
   \alpha^G(a)=\sum\limits_{0\leq i\leq 2}\alpha^{x^i}(a)(1+\alpha^{x^i}([a,y]))=\sum\limits_{0\leq i\leq 2}\gamma^{x^i}(a)(1+\delta^{x^i}([a,y]))=0  
   \]
  and hence by Lemma~\ref{ind} $a\in \V G$.
We can apply this argument to every $a\in C$ such that $o(a) = e$, and hence  $C-C^2\subseteq \V G$.
As $[A:C]= [C:C^2] = 4$, we conclude that 
\[
\P G\geq \frac{5}{6}+\frac{1}{6}\frac{|C-C^2|}{4|C|}= \frac{5}{6}+\frac{1}{6}\cdot\frac{3}{16}=\frac{83}{96}>\mathfrak{a}.
\]

  \smallskip
  (2) For the remaining cases, up to interchanging the generators $x$ and $x^{-1}$ of $P$, we can  assume that $g^{\frac{e}{2}}=[g^2,y]^{x}$ for every $g \in A$.
  We will show that $\N G = A$, so $\P G = \frac{5}{6}$.
  By part (3) of Lemma \ref{5/6} we only have to show that $A\subseteq \N G$.
Working by contradiction, we assume that there exists an element $b \in A\cap \V G$ and we have, by Lemma \ref{ind}, 
a character  $\alpha\in\irr{A}$ such that
$\alpha^G(b) = \sum\limits_{0\leq i\leq 2}\alpha^{x^i}(b)  +\sum\limits_{0\leq i\leq 2}\alpha^{x^i}(b^y) = 0$.
Defining 
$\delta_i = \alpha^{x^i}(b)^{-1}\alpha^{x^i}(b^y) = \alpha^{x^i}([b,y])$,
by part (1) of  Lemma~\ref{6sumsof2thunity} there hence exists at least one $i_0 \in \{0,1,2\}$ such that 
$\delta_{i_0}$ is a primitive $4$-th root of unity (recall that $[b,y] \in D$ and $\exp(D) = 4$).
In particular, $1 \neq \delta_{i_0}^2 = \alpha^{x^{i_0}}([b^2,y]) = \alpha^{x^{i_0}}((b^{\frac{e}{2}})^{x^{-1}}) =
(\alpha^{x^{i_0 +1}}(b))^{\frac{e}{2}}$
implies that $o(\alpha^{x^j}(b)) = e \geq 8$ for some $j \in \{1,2,3\}$. 
So, part (3) of Lemma~\ref{6sumsof2thunity} gives that $\{\delta_i \mid 0 \leq i \leq 2 \} = \{ \omega, -1 \}$,
where either $\omega = \zeta_4$ or  $\omega = -\zeta_4$.
Up to replacing $\alpha$ with some conjugate character $\alpha^{x^i}$ (which we can do, since they induce the same character to $G$),  we can hence assume that $\delta_0 = \omega = \delta_1$ and that $\delta_2 = -1$.
So, $\alpha^G(b) = \alpha(b) +\alpha^x(b) +\omega \alpha(b) + \omega \alpha^x(b) = (\alpha(b) +\alpha^x(b))(1 + \omega) = 0$
forces   $\alpha^x(b) = - \alpha(b)$ and hence, since $\frac{e}{2}$ is even, we have
$-1 = \delta_0^2 = \alpha([b^2, y]) =  \alpha^x([b^2, y]^x)= \alpha^x(b^{\frac{e}{2}}) =  \alpha(b^{\frac{e}{2}})= \alpha([b^2, y]^x) = \alpha^{x^2}([b, y]^2) = \delta_2^2 = 1$, a contradiction. 
\end{proof}
We are now ready to prove the main result of this section.

\begin{pro}\label{AsubsetNG}
  Let $A$ be an abelian normal $2$-subgroup of the group $G$ such that $[G:A] = 6$.
  Let $P$ be a Sylow $3$-subgroup of $G$, let   $Q$ be  a Sylow $2$-subgroup of $G$, and assume that  $Q$ is nonabelian and that  $\cent AP =1$.
  If $\P G \leq \mathfrak{a}$, then $A = \N G$.
\end{pro}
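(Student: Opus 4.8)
The plan is to realize $A$ with a complement so as to be in Setting~\ref{hyp}, dispose quickly of the case $\class(Q)\leq 2$, and then attack the case $\class(Q)\geq 3$ through the auxiliary lemmas of this section after cutting the rank of $A$ down to $\{2,4\}$. First, since $\cent AP=1$ and $Q$ is nonabelian, $A\neq 1$ and $PA\unlhd G$, so Frattini's argument gives $G=\norm GP A$; and $\norm AP$, being $P$-invariant and normal in $\norm AP\,P$, satisfies $[\norm AP,P]\leq\norm AP\cap P=1$, whence $\norm AP\leq\cent AP=1$. Thus $H:=\norm GP$ is a complement to $A$ in $G$ with $P\unlhd H$ and $H\cong G/A$ of order $6$; fixing an involution $y\in H$ we are in Setting~\ref{hyp}, with $\P G\leq\mathfrak a$. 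By part~(3) of Lemma~\ref{5/6} we have $\N G\subseteq A$, so it suffices to prove $A\subseteq\N G$. A commutator computation gives $[A,y,y]=[A,y]^2$, so if $\class(Q)\leq 2$ then $\exp([A,y])\leq 2$ and $A\subseteq\N G$ by part~(2) of Lemma~\ref{5/6}. Otherwise I argue by contradiction: take $a\in A\cap\V G$. Since $A\subseteq\oh 2G$, Lemma~\ref{brough} forces $a\notin\z Q$; and since $A$ is abelian, $G=\cent Ga\,\langle a^H\rangle H$, so by Lemma~\ref{nonvanishingsmallgroup} I may replace $G$ by $\langle a^H\rangle H$, which retains Setting~\ref{hyp}, $\P G\leq\mathfrak a$, $a\in\V G$, and (the case $\class(Q)\leq 2$ being already dealt with) also $\class(Q)\geq 3$. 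Now $A=\langle a^H\rangle$, and from $a\,a^xa^{x^2}=1$ one sees $A$ is generated by $a$, $a^x$ and their $y$-images, so $\rank(A)\leq 4$; as $A$ is a direct sum of rank-$2$ homocyclic $P$-modules, $\rank(A)\in\{2,4\}$. Set $C=\langle a,a^x\rangle$, a homocyclic $P$-module of rank $\leq 2$ with $A=CC^y$.

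Suppose $H\cong C_6$, so $y$ centralizes $x$. Then $Q'=[A,y]=[C,y]$ is a $P$-module with no nontrivial $P$-fixed points, hence homocyclic of even rank; combined with $\exp(Q')\geq 4$ and the bound $\class(Q)\leq 3$ from Lemma~\ref{XYtypeelementary}(1), this gives $Q'\cong(C_4)^2$ and $\class(Q)=3$. If $\rank(A)=2$: as $\cent Ay$ is the kernel on $A$ of an order-$\leq 2$ unit of the Galois ring underlying this uniserial $P$-module, one computes $\cent Ay=\Omega_1(A)$, hence $\class(Q)=\log_2\exp(A)$, so $A\cong(C_8)^2$, and Lemma~\ref{XYtypeclassification} yields $\P G>\mathfrak a$ or $\N G=A$, contradicting $a\in\V G$. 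If $\rank(A)=4$ and $\exp(A)\geq 8$: one checks that $A=CD$ with $D=[C,y]=Q'\cong(C_4)^2$ and $C\cap D=D^2=\Omega_1(D)$, so Lemma~\ref{Ytypevanishelement} produces the same contradiction. The one case left, $\rank(A)=4$ and $\exp(A)=4$ (a ``wreath-type'' configuration, $Q$ built on $(C_4)^2\wr C_2$), is handled directly, by exhibiting a large family of vanishing elements of $A$ that forces $\P G>\mathfrak a$.

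Suppose instead $H\cong S_3$. Writing $Z=\z Q$, one reduces (analyzing $A=CC^y$ and passing to a suitable section when needed) to the situation where Lemma~\ref{XYtypeelementary}(3) applies, giving $\exp(A)\leq 4$ and $Z\not\cong(C_4)^2$; these constraints eliminate the case $\rank(A)=4$ (which would make $[A,y]$ of exponent $\leq 2$, against $\class(Q)\geq 3$) and leave, for $\rank(A)=2$, only $A\cong(C_4)^2$ with $\class(Q)=3$ — a group of order $96$, essentially a $(\mathbb Z/4\mathbb Z)^2$-analogue of $S_4$ — for which a direct verification shows $A=\N G$, the final contradiction.

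The substance is in the last two paragraphs: determining exactly which configurations of the $P$-module $A$ with a compatible involution $y$ survive $\class(Q)\geq 3$, $\P G\leq\mathfrak a$ and $\rank(A)\leq 4$, matching each to the hypotheses of Lemmas~\ref{XYtypeelementary}, \ref{XYtypeclassification} and \ref{Ytypevanishelement}, and settling by hand the two small residual groups (the $\exp(A)=4$ wreath-type group for $H\cong C_6$ and the order-$96$ group for $H\cong S_3$). The structural backbone throughout is the decomposition of $A$ into rank-$2$ homocyclic uniserial $P$-modules forced by $\cent AP=1$, together with the short list of involutions $y$ compatible with it, which is what simultaneously caps $\rank(A)$ and $\exp(A)$.
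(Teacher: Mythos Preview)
Your reduction step ``replace $G$ by $\langle a^H\rangle H$'' is where the argument breaks. Passing to a subgroup does \emph{not} preserve the hypothesis $\P G\leq\mathfrak a$: Lemma~\ref{nonvanishingsmallgroup} transfers the single statement $a\in\V G$ to $a\in\V{G_0}$, but says nothing about the global proportion $\P{G_0}$. Yet you immediately invoke Lemma~\ref{XYtypeelementary}(1), Lemma~\ref{XYtypeelementary}(2), and the dichotomy of Lemma~\ref{XYtypeclassification} and Lemma~\ref{Ytypevanishelement}, all of which either assume $\P{G}\leq\mathfrak a$ or produce ``$\P G>\mathfrak a$'' as one branch of a disjunction that you then treat as a contradiction. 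Once you are in $G_0$, the branch $\P{G_0}>\mathfrak a$ is perfectly consistent with everything you know, so the case analysis collapses. The paper avoids this by reducing via a \emph{quotient}: it takes a minimal counterexample, picks $\alpha\in\irr A$ with $\alpha^G(a_0)=0$, and factors out the normal core $K_G$ of $\ker(\alpha)$; since $\P{G/K_G}\leq\P G$, all hypotheses survive, and from $K_G=1$ one gets $\rank(A)\leq 4$ and the finer information $\hat A=B^\perp B^{\perp y}$ with $B=K\cap K^x$, which drives the subsequent case split on $|B|$.

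Your $H\cong S_3$ sketch also has a concrete error. You never establish $A=Z\times Z^x$ (the paper gets it from $Z\cap Z^x=\cent AH=1$ and a dual-group computation showing $[\hat A,y]\cap[\hat A,y]^x=1$), and your claim that $\rank(A)=4$ is ``eliminated'' because it would force $\exp([A,y])\leq 2$ is false: with $Z\cong C_4\times C_2$ one has $A=Z\times Z^x$ of rank $4$, $[A,y]\cong Z$ of exponent $4$, and $\class(Q)=3$. The paper handles both $Z\cong C_4$ and $Z\cong C_4\times C_2$ together by a direct analysis of the equation $\alpha^G(a_0)=0$ via Lemma~\ref{6sumsof2thunity}(2), ultimately forcing a nontrivial normal subgroup into $\ker(\alpha)$ and contradicting $K_G=1$; the argument is not a ``direct verification'' on a single order-$96$ group.
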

\begin{proof}
  By part (3) of Lemma~\ref{5/6}, we know that $\N G \subseteq A$ and  we have to prove that $A \subseteq \N G$. 
  Let $G$ be a counterexample of minimal order, and let $a_0\in A$ such that $a_0 \in  \V G$.
  Let $\hat{A}=\irr{A}$ be  the dual group of the abelian group $A$.
  By Lemma~\ref{ind}, there exists a character $\alpha\in \hat{A}$ such that $\alpha^G(a_0)=0$.
  Write $K=\ker(\alpha)$. 
  Then $K^\perp \cong \widehat{A/K}\cong A/K$ is a cyclic group of order $o(\alpha)$ and, as $\alpha\in K^\perp$, we see that 
   $K^\perp=\la \alpha\ra$. It follows that, for $g \in G$, $(K^{\perp})^g=(K^g)^\perp = \la \alpha^g \ra$.

      Note that $\ker(\alpha^G)  = \bigcap_{g \in G}K^g = K_G$ is the normal core of $K$ in $G$; in particular, $K_G \leq K \leq A$.
   Let $\o G = G/K_G$ and observe that all assumptions are inherited by  $\o G$.
   In fact, $\o A$ is a normal abelian $2$-subgroup of $\o G$, $[\o G : \o A] = 6$, and $\cent{\o A}{\o P} = 1$ by part (1) of Lemma~\ref{ca}.
   Since $K_G \leq K$, we can see $\alpha$ as a character of $\irr{\o A}$ and $\alpha^{\o G}(\o{a_0}) =
   \alpha^G(a_0) = 0$, so $\o{a_0} \in \V{\o G}$ by Lemma~\ref{ind}. In particular, part (2) of Lemma~\ref{5/6} implies
   that $\o Q$ is nonabelian.
   Hence, by the minimality of $G$, we conclude that $K_G = 1$.   

   Let $P = \la x \ra$. 
   As $\cent AP = 1$, by Brauer's permutation lemma $\cent{\hat{A}}P = 1$ and hence $\lambda\lambda^x\lambda^{x^2}=1_A$ for all $\lambda\in \hat{A}$. So, $K^{x^2} \geq K \cap K^x$ and then
   $$K \cap K^x \cap K^y \cap K^{xy} =  K_G = 1 \, .$$
   By part (1) of Lemma~\ref{5/6}, there exists an involution $y \in \norm QP - A$;
   hence, $Q = A\la y \ra$ and  $H = \la x, y \ra$ is a complement of $A$ in $G$. 
Set  $B=K\cap K^x$; so, $B$ is $P$-invariant and  $B \cap B^y = 1$. 
By  \cite[V.6.4]{huppertI} we have that  $B^{\perp }=K^\perp (K^x)^\perp = \la \alpha, \alpha^x \ra$ has rank at most $2$, so
$ B^{\perp} = \la \alpha \ra \times \la \alpha^x\ra$, as  $\cent{\hat{A}}P = 1$.
 Moreover, 
 $$\hat{A}= 1^\perp = (B \cap B^y)^\perp = B^\perp B^{\perp y}=\la \alpha,\alpha^x\ra \la \alpha^y,\alpha^{xy} \ra = \la \alpha,\alpha^x\ra \la [\alpha,y],[\alpha^x,y]\ra=B^\perp [\hat{A},y].$$
 In particular, $\rank(A) =\rank(\hat{A}) \leq 4$ and $\exp(A) = \exp(\hat{A}) = o(\alpha)$. Write $e = \exp(A)$
 and observe that $e\geq 4$ by part (2) of Lemma~\ref{5/6}.

We split the rest of the proof into two cases: (1) $H \cong C_6$  and (2) $H \cong S_3$.

\medskip
(1) Assume first  $H\cong C_6$. In this case, $Q \nor G$ and, setting $Z =\z Q$, then $Z \nor G$.  By part (1)  of
Lemma~\ref{XYtypeelementary} and part (2) of Lemma~\ref{5/6} we have  $\exp(A/Z)=4$ and $\class(Q)=3$.
Observe also that $B\cap Z = 1$, since $B \cap B^y = 1$.

As $\hat{A} = B^\perp B^{\perp y}$,
we see that $[\hat{A}, y] = [B^\perp, y][B^{\perp y}, y] = [B^\perp, y] = \la [\alpha, y], [\alpha, y]^x\ra$ has rank two.
By Lemma \ref{pgroup}, $Q' = [A, y] \cong [\hat{A},y]$ and $Q' \cong A/Z$, so  
$Q'$ has rank two and $\exp(Q') = 4$. 
Since $Q'$ is $P$-invariant, $Q'$ is homocyclic and we conclude that 
$Q' \cong (C_4)^2$.

By Lemma~\ref{pgroup}  we also know that $y$ acts as the inversion on both $Q'$ and $A/Z$. It follows that
$Q' \cap Z = \Omega_1(Q')$ and  $A/{\bf Z}_2(Q) \cong (C_2)^2$.

  As $B \cong BZ/Z$ and $BZ/Z$ is a $P$-submodule  of $A/Z \cong (C_4)^2$, we have three cases: 
 
\smallskip
(1a): $B = 1$. In this case $A \cong \hat{A} = B^\perp$ is homocyclic of rank two. Since $Q' \not\leq Z$, as $\class(Q) = 3$,  and $A$ is a uniserial
$P$-module, we have $Z \leq Q'$.
So, $Z \cong (C_2)^2$ and $A \cong (C_8)^2$, a contradiction by Lemma~\ref{XYtypeclassification}.

  \smallskip
  (1b): 
  $B \cong (C_2)^2$. Note that in this case  $BZ/Z = \cent{A/Z}y$, so $BZ = {\bf Z}_2(Q)$.
  By part (2) of  Lemma~\ref{5/6}  applied to the subgroup $BZH$ of $G$ and Lemma~\ref{nonvanishingsmallgroup}, we hence see that $a_0 \not\in BZ$.
  We also observe that $Z \not\leq Q'$. In fact, if $Z \leq Q'$, then $Z = \Omega_1(Q')$, so   $B \cap Q' = 1$;  this
  implies that $Q'/Z$ is a complement of $BZ/Z$ in $A/Z \cong (C_4)^2$, a contradiction. 
  
Let $\o A = A/B$.
Since $\o A \cong \widehat{A/B} \cong B^{\perp} \cong (C_e)^2$
and $\o{a_0} \not\in \o{Z} = \o{A}^2$, we have that $o(a_0) = e = \exp(A)$. 
Let   $C=\la a_0,a_0^x\ra$ and observe that  $[A:C] = 2^2$, so $C$ is a maximal $P$-submodule of $A$.
We also note that $e \geq 8$, as $e \leq 4$ implies $|A| \leq 8^2$, so $|Z| = 2^2$ and $Z$ is a minimal $P$-submodule of $A$,
which is impossible as  $Z \not\leq Q'$. 

If $C^y\neq C$, then $A = CC^y$ by the maximality of $C$. 
It follows that $[C, y] = [A,y] = Q'$ and  $A = C[C,y] = CQ'$.
Also,  $C\cap Q' = (Q')^2 = \Omega_1(Q')$, because
$Q' \cong (C_4)^2$, and we get a contradiction by  Lemma~\ref{Ytypevanishelement}.

  Hence, we can assume that   $C^y = C$, so $C \nor G$. 
  As $\cent{A/C}y$ is a nontrivial $P$-submodule of the irreducible $P$-module $A/C$, we deduce that $Q/C$ is abelian and hence that $Q' \leq C$.
  Hence, since both $Z \not\leq Q'$ and $Q' \not\leq Z$, 
  $Z$ cannot be contained in the  uniserial $P$-module $C$.
 By the maximality of $C$, it follows that  $A = CZ$. So, $C/(Z \cap C) \cong A/Z  \cong (C_4)^2$ and
 we conclude that $e = 8$ and $C \cong (C_8)^2$. 
 We observe that  $C$ is $H$-invariant,  that $\class(C\la y \ra) = 3$ since $Q' \leq C$, and that $a_0 \in \V{CH}$ by
 Lemma~\ref{nonvanishingsmallgroup}.
 So, an application of 
 Lemma~\ref{XYtypeclassification} to the $CH$  yields $C - C^2 \subseteq \V{CH}$ and then
 $C - C^2 \subseteq \V G$ by Lemma~\ref{nonvanishingsmallgroup}.
  Hence
  $$\P G \geq \frac{5}{6} +\frac{1}{6}\frac{|C - C^2|}{2^2|C|} = \frac{83}{96} >\mathfrak{a},$$
  a contradiction. 

\smallskip
  (1c): 
  $B \cong (C_4)^2$. In this case, $A = B \times Z$ and,  as $\rank(A) \leq 4$ and $Z$ is nontrivial and $P$-invariant, we see
  that $Z \cong (C_f)^2$, where $f = \exp(Z)$.
  Since $B \cong (C_4)^2$ and as $|B\times B^y| =|B|^2 \leq  |A|$, we have  $f \geq 4$.
 If $f \in \{4, 8\}$, then  $[A : B \times B^y] \leq 4$ and we
get a contradiction by part (2) of Lemma~\ref{XYtypeelementary}. So, we have $f \geq 16$ and $f=e = \exp(A)$. 
  Let $b\in B$ with $o(b) = 4$.
  Since $Q' = [BZ, y] =  [B, y]= \la [b,y], [b^x, y]\ra$ and   $[b,y]^{2x} = [b^x, y]^2$ is an element of order $2$ of $ \Omega_1 (Q') = \Omega_1(Z)$,
  there exists an element (that depends on $b$) $z = z_b \in Z$ such that $o(z) = e$ and  $z^{\frac{e}{2}}=[b,y]^{2x}$.
  Let $X_b = \{  bz^iz^{jx} \mid i, j \text{  odd} \}$ and observe that, as $Z = \la z \ra \times \la  z^x \ra$,
  we have $|X_b| = (\frac{e}{2})^2$.
  Consider an element $a = bz^iz^{jx} \in X_b$ and let   $C = \la a,a^x\ra$ and
  $A_0= C C^y \nor G$.  So, $A_0 = CD$, where $D = [C, y] = \la [b,y], [b,y]^x\ra = Q'$.
As $e \geq 16$ and $o(b) = 4$, then $\Omega_2(C) = \la a^{\frac{e}{4}} , a^{\frac{e}{4}x} \ra \leq Z$ and 
  $C \cap D = \Omega_2(C) \cap D =   D^2 = \Omega_1(D)$.
   Also,
  \[
  a^{\frac{e}{2}}=(bz^iz^{jx})^{\frac{e}{2}}= (z^iz^{jx})^{\frac{e}{2}}=[b,y]^{2x}[b,y]^{2x^2}=[b,y]^{-2}=[b,y]^2=[bz^iz^{jx},y]^2=[a,y]^2,
  \]
  where the fourth equality is a consequence of the Frobenius action of $P$ on $A$.
  Since  $\exp(A_0) = o(a) \geq 16$, an application of  Lemma \ref{Ytypevanishelement} to $G_0 = A_0H$ yields  $a\in \V{G_0}$ and, 
  since $G=\cent{G}{a}G_0$,  by Lemma \ref{nonvanishingsmallgroup} we conclude  that $a \in \V G$.
   Hence, $X_b \subseteq \V G$ for every $b \in B - B^2$. As $|\bigcup_{b\in B -B^2}X_b| = |B-B^2|(\frac{e}{2})^2$,  we conclude that 
  $|A\cap \V G|\geq \frac{3}{4}|B|(\frac{e}{2})^2$.
  Hence 
  \[
 \P G=\frac{5}{6}+\frac{1}{6}\frac{|A\cap \V G|}{|A|}\geq  \frac{5}{6}+\frac{1}{6}\frac{\frac{3}{4}|B|(\frac{e}{2})^2}{|B|e^2}=\frac{5}{6}+\frac{1}{6}\frac{3}{16}>\mathfrak{a}, 
  \]
  a contradiction.

  \medskip
  (2) Assume now   $H\cong S_3$.
 As $H= \la y,y^x\ra$ and $Z = \z Q=\cent{A}{y}$, we have  
 $$Z\cap Z^x=\cent{A}{y}\cap\cent{A}{y}^x=\cent{A}{\la y,y^x\ra}=\cent{A}{H}\leq \cent{A}{P}=1.$$
 An application of part (1) of Lemma~\ref{pgroup} to the action of $y$ on $\hat{A}$ yields 
$\mu^y=\mu^{-1}$ and hence  $\mu^{xy}=\mu^{-x^2}\neq \mu^{-x}$   for every $\mu\in [\hat{A},y]$, $\mu \neq 1_A$, 
 so $[\hat{A},y]\cap [\hat{A},y]^x=1_A$.
  Since by Lemma~\ref{pgroup} $[\hat{A},y]^\perp= Z$,
  it follows that
\begin{center}
  $A=(1_A)^\perp=([\hat{A},y]\cap [\hat{A},y]^x)^\perp=Z Z^x=Z\times Z^x$.
\end{center}  
  Since $e = \exp(Z) \geq 4$ and $\rank(A) \leq 4$, by  part (3) of Lemma \ref{XYtypeelementary} we deduce that   $e = 4$ and that  $Z$ is not isomorphic to $(C_4)^2$.
Hence, either  $Z \cong C_4\times C_2$ or $Z \cong C_4$.  
Write $Z = \la z, w \ra$,  where  $o(z) = 4$ and $o(w)\leq 2$. So, $A=V\times W$,  where $V=\la z,z^x\ra$ and  $W=\la w,w^x\ra$.
As $z^{xy} = z^{yx^2}=z^{x^2} =(zz^x)^{-1}$, it follows that  $V \nor G$ and, similarly, $W \nor G$.
Let $a_0 \in A$ and $\alpha \in \irr A$ be as defined in the first paragraph of the proof. 
Write $a_0=z^kz^{lx}w_0$, where $w_0\in W$ and 
  recall  that 
  \[
   \alpha^G(a_0)=\sum\limits_{0 \leq i \leq 2}\alpha^{x^i}(a_0)+\sum\limits_{0 \leq i \leq 2}\alpha^{x^i}(a_0^y)=0.  
  \]
  Up to substituting $\alpha$ with a suitable conjugate $\alpha^h$ for some $h \in H$, we may assume that
  $o(\alpha(a_0)) = \max \{ o(\alpha^h(a_0)) \mid h \in H \}$.
  Since $\alpha(a) \alpha^x(a) \alpha^{x^2}(a)=1$ for all $a \in A$  and both  $\alpha^{x^i}(a_0)$ and  $\alpha^{x^i}(a_0^y)$ belong to  $\mathsf{U}_4$ for all $0 \leq i \leq 2$,
 an application of part (2) of Lemma~\ref{6sumsof2thunity} yields $o(\alpha(a_0)) = 4$ 
  and $o(\alpha^{x^i}(a_0^y)) \leq 2$ for all $0 \leq i \leq 2$.
  So, $o(\alpha([a_0,y]))= o(\alpha(a_0)^{-1}\alpha(a_0^y)) = 4$  and hence, as 
  $[a_0,y]= [z^{lx},y] [w_0, y] = z^{-lx}z^{lx^2}[w_0, y]$ and $o([w_0, y]) \leq 2$ (as $[w_0, y] \in W$),
   we deduce that  $l$ is odd. Up to interchanging $z$ and $z^{-1}$, we can hence assume that $l = 1$. 
Let $u = z^2$, an involution, and observe that 
 $(\alpha^{x^i}(a_0^{y}))^2=1$ for all $0 \leq i \leq 2$ implies 
 that $(a_0^2)^y = (u^ku^x)^y = u^ku^{x^2} \in \ker(\alpha^{x^i})$ for all  $0 \leq i \leq 2$, and hence
 $(u^ku^{x^2})^{x^j} \in K =  \ker(\alpha)$ for all $0 \leq j \leq 2$.
 Now, both for $k \equiv 0 \pmod 2$ and $k \equiv 1 \pmod 2$,
we conclude that $ V^2 = \la u,u^x \ra \leq K$. 
But $1 \neq V^2 \nor G$ and this is a contradiction since $K_G=1$.
\end{proof}
\section{Proof of the main theorem}
\begin{pro}\label{GN<4}
  Let $N$ be a proper nilpotent normal subgroup of the group $G$. Assume that  $N$ is nonabelian and that $\N G \subseteq N$.
  If  $\P G \leq \mathfrak{a}$,  then $[G:N] = 3$ and there exists an abelian normal subgroup
  $A$ of $G$   such that  $A \leq N$, $[N:A] =2$ and $\N G \subseteq A$.
\end{pro}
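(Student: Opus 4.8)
The plan is to bound $[G:N]$, then to trap $\N G$ inside a $G$-normal abelian subgroup of $N$, and finally to force $[G:N]=3$. By Theorem~\ref{mtt} we may assume $G$ is solvable (if $\P G<\mathfrak a$ this is immediate; the borderline case $\P G=\mathfrak a$ requires only an easy separate observation, since $\N G\subseteq N\subsetneq G$ with $N$ nilpotent is incompatible with non-solvability). Since $\N G\subseteq N$ we have $|\N G|\le|N|=|G|/[G:N]$, while $\P G\le\mathfrak a$ gives $|\N G|=|G|-|\V G|\ge(1-\mathfrak a)|G|=\tfrac{193}{1260}|G|$; hence $[G:N]\le\tfrac{1260}{193}<7$, so, $N$ being proper, $[G:N]\in\{2,3,4,5,6\}$. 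Observe also that $\N G\not\subseteq\z N$: otherwise $|\N G|\le|\z N|=|N|/[N:\z N]\le|G|/\bigl(4[G:N]\bigr)\le|G|/8$ (using that $N$ nonabelian nilpotent forces $[N:\z N]\ge 4$), and then $\P G\ge 7/8>\mathfrak a$, a contradiction.

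The heart of the argument is the claim that there is a $G$-normal abelian subgroup $A$ of $G$ with $A\le N$ and $\N G\subseteq A$. Granting this: since $N$ is nonabelian we have $A\ne N$, so $k:=[N:A]\ge 2$; and $|\N G|\le|A|$ together with the proportion inequality gives $[G:A]=k\,[G:N]\le\tfrac{1260}{193}<7$, so, using $[G:N]\ge 2$, $k\ge 2$ and $[G:A]\ne 5$, we get $(\,[G:N],k\,)\in\{(2,2),(2,3),(3,2)\}$; the pair $(3,2)$ is exactly the assertion. To prove the claim, the key tool is Lemma~\ref{vP}, which gives $\V N=N-\z N$, combined with Lemma~\ref{nonvanishingsmallgroup} applied with $H=N$: if $n\in N$ and $G=\cent G n\cdot N$, then $n\in\V G\iff n\in\V N$. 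Hence every element of $\N G$ lying outside $\z N$ has centralizer failing to cover $G/N$, i.e.\ its $N$-conjugacy class is moved by $G/N$; bounding the number of such classes --- via Brauer's permutation lemma (as used in Lemma~\ref{l5}) and the nilpotent structure of $N$ (Lemma~\ref{vP}, Lemma~\ref{brough}) --- against the lower bound $|\N G|\ge\tfrac{193}{1260}|G|$ forces $\N G$ to generate, together with $\z N$, an abelian subgroup; one then takes $A$ to be a maximal $G$-normal abelian subgroup of $N$ containing it.

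It remains to eliminate the pairs $(\,[G:N],k\,)=(2,2)$ and $(2,3)$, i.e.\ $[G:N]=2$ with $[G:A]\in\{4,6\}$. Here $G$ is not nilpotent (else $\N G=\z G\le\z N$ by Lemma~\ref{vP}, against $\N G\not\subseteq\z N$), so, $[G:N]$ being prime, $\fitt G=N$; moreover, since $[N:A]=k$ is a prime and $A$ is abelian, the Hall $k'$-part of $N$ lies in $A$, so the nonabelian part of $N$ is a single $G$-normal Sylow $p$-subgroup $P$ (with $p=k$) having $A\cap P$ of index $p$ in $P$, whence $P$ has character degrees in $\{1,p\}$ and is controlled by Lemma~\ref{pl2}. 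Running through the (few) resulting possibilities for the action of $G/A$ on $A$ --- reducing vanishing on $A$ to vanishing of $p$-parts via Proposition~\ref{56connect} when it applies, and feeding the outcome into the index-$6$ analysis of Lemma~\ref{5/6}, Proposition~\ref{AsubsetNG} and Lemma~\ref{m5} together with Lemma~\ref{pl2} --- one obtains $\P G>\mathfrak a$ in each case, a contradiction. Therefore $[G:N]=3$ and $k=2$, so $\N G\subseteq A$ with $A\le N$ and $[N:A]=2$, as required. The main obstacle is the middle step: the fine control of the discrepancy between $\V N$ and $\V G$ --- elements of $N$ central in $N$ yet vanishing in $G$, and elements vanishing in $N$ but not in $G$ --- needed to trap $\N G$ inside a $G$-normal abelian subgroup of index $2$ in $N$; this is where essentially all of the case analysis and the auxiliary propositions are brought to bear.
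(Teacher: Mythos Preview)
Your proposal contains a genuine gap at precisely the point you yourself identify as ``the heart of the argument''. You assert that there is a $G$-normal \emph{abelian} subgroup $A\le N$ with $\N G\subseteq A$, but the justification offered --- that elements of $\N G\setminus\z N$ have their $N$-class moved by $G/N$, and that ``bounding the number of such classes via Brauer's permutation lemma'' then forces $\N G$ to generate (with $\z N$) an abelian subgroup --- is not a proof. Knowing that $\cent Gn\cdot N\ne G$ for $n\in\N G\setminus\z N$ gives no control whatsoever over the commutativity of such elements, and Brauer's permutation lemma, Lemma~\ref{vP} and Lemma~\ref{brough} do not combine in any evident way to yield this conclusion. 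You also never explain why the abelian subgroup you would obtain is $G$-invariant. The final paragraph has the same defect: the elimination of $([G:N],k)\in\{(2,2),(2,3)\}$ is merely asserted (``running through the few resulting possibilities\ldots one obtains $\P G>\mathfrak a$''), and the reference to Proposition~\ref{AsubsetNG} is inapplicable in the $(2,3)$ case, since there the nonabelian Sylow subgroup of $N$ is a $3$-group, not a $2$-group.

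The paper's proof proceeds quite differently and avoids these difficulties. Rather than first locating an abelian $A$ and then bounding indices, it passes to a quotient $\o G=G/D$ in which $\o N$ is a class-$2$ $p$-group with $\o N'$ minimal normal in $\o G$, and exploits that every nonlinear $\varphi\in\irr{\o N}$ is fully ramified over its center: $\varphi$ vanishes off $\z\varphi$ and $[\o N:\z\varphi]=\varphi(1)^2$. This immediately gives $\P G\ge 1-1/[N:Z_\lambda]$, forcing $p=2$, $\varphi(1)=2$, hence $\mathrm{cd}(\o N)=\{1,2\}$. The indices $[G:N]\in\{2,4,5,6\}$ are then ruled out one by one using the structure of the irreducible $G/N$-module $\o K=\o N'$ and Lemma~\ref{pl2}. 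Only after establishing $[G:N]=3$ does the paper produce $A\nor G$ with $[N:A]=2$; the abelianness of $A$ is deduced \emph{a posteriori}, by reapplying the statement to $A$ itself (were $A$ nonabelian, the conclusion would force $[G:A]=3$, contradicting $[G:A]=6$). Your bounds $[G:N]\le 6$ and $\N G\not\subseteq\z N$ are correct and useful, but the substantive character-theoretic analysis is missing.
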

\begin{proof}
  We prove that, whenever $N\neq G$ is a  nonabelian nilpotent normal subgroup of $G$ such that $\N G \subseteq N$, if
  $\P G \leq \mathfrak{a}$ then
  $[G:N] = 3$ and there exists a subgroup $A \nor G$ such that $A \leq N$, $[N:A]= 2$ and
  $\N G \subseteq A$. It will hence follow that  $A$ is  abelian, as $A$ is nilpotent and $[G:A]\neq 3$. 
  
  Since $N$ is a nilpotent, nonabelian and normal subgroup of $G$, there exists a $G$-invariant subgroup $D$ of $N$ such that,
  writing $\o G = G/D$,
  $\o N$ is a nonabelian $p$-group, for a prime $p$,
  and the commutator subgroup $\o{N}'$ is a minimal normal subgroup of $\o G$.
  We denote by $K$ the preimage of $\o{N}'$ in $G$.
  Setting $\o Z = \z{\o N}$, we have that $\o K \leq \o Z$ and hence $\o K$ is an irreducible $G/N$-module.
  The assumptions $G - N \subseteq  \V G$ and $\P G \leq \mathfrak{a}$ imply that  $|G/N|\leq 6$ and hence  $|\o K| \in \{p, p^2, p^4\}$: this follows from~\cite[II.3.10]{huppertI} if $G/N$ is abelian, and by recalling that  every field of prime order is a splitting field for $S_3$. 

For a nontrivial  $\lambda \in \irr{\o K}$, let $\o {Z_{\lambda}} \nor \o N$  be such that 
$\o {Z_{\lambda}}/\ker (\lambda) = \z{\o N /\ker(\lambda)}$. 
Then, for every $\varphi\in \irr{\o N}$ that lies over $\lambda$, $\o {Z_{\lambda}} = \z{\varphi}$: in fact
$\z{\varphi} \leq \o {Z_{\lambda}}$, because $[\z{\varphi}, \o N] \leq \o K \cap \ker(\varphi) =  \ker(\lambda)$
as $\varphi_{\o K} = \varphi(1)\lambda$, 
while the other inclusion is clear. 

As  $\o N$ has nilpotency class $2$,  $\varphi$ is fully ramified with respect to $\o N / \o {Z_{\lambda}}$ 
by~Theorem~2.31 and Problem~6.3 of~\cite{isaacs1976}. Hence, $[\o N: \o {Z_{\lambda}}] = \varphi(1)^2 $ is
a square $\neq 1$ (as $\varphi$ lies over $\lambda \neq 1_{\o K}$)  and $\varphi(g) = 0$ for every $g \in \o N - \o {Z_{\lambda}}$. 

Let
$$ \o{X_0} = \bigcup_{x \in G/N}\o {Z_{\lambda}}^{x}$$
and let  $X_0$ be the preimage of $\o{X_0}$ in $G$.  
Then, 
\begin{equation}
  \label{eq1}
|\o{X_0}| \leq [G/N:I_{G/N}(\lambda)]|\o {Z_{\lambda}}| \leq   |G/N||\o {Z_{\lambda}}| 
\end{equation}
and in particular, denoting by  $Z_{\lambda}$ the preimage of $\o{Z_{\lambda}}$ in $G$, we see that   $|X_0|/|G| \leq |Z_{\lambda}|/|N|$.

By considering an irreducible character of $\o G$ lying over $\varphi$, Clifford's theorem  implies  that $\o N - \o{X_0} \subseteq \V{\o G}$. So, by inflation $N - X_0 \subseteq \V G$ and, recalling that $G - N \subseteq \V G$, we deduce that $G - X_0 \subseteq \V G$. 
Hence,  
$$\P G = \frac{|\V G|}{|G|} \geq \frac{|G - X_0|}{|G|} =  1 - \frac{|X_0|}{|G|} \geq 1 - \frac{1}{[N: Z_{\lambda}]} \; .$$ 
Now,  $\P G \leq \mathfrak{a}$ implies  that $[N: Z_{\lambda}] \leq 6$, and hence, since $ 1 \neq [N: Z_{\lambda}]$ is 
a square, we deduce that  $ p = 2 $ and that  $\varphi(1)^2 = [N: Z_{\lambda}] = 2^2$.
As this is true for every nontrivial $\lambda \in \irr{\o K}$ and every $\varphi \in \irr{\o N}$ that lies over $\lambda$,
we conclude that $\mathrm{cd}(\o N) = \{1,2\}$. 

Next, we argue that $|\o K| \neq 2$. Namely, if $|\o K| = 2$  then $\lambda$ is the only nontrivial character of 
$\irr{\o K}$ and $\o {Z_{\lambda}} = \z{\o N} \nor \o G$. This implies (by the definition of $X_0$) 
that $X_0 = Z_{\lambda}$ and hence, as $N <G$, $\P G = 1 - (1/[G:Z_{\lambda}]) \geq \frac{7}{8}$,  against $\P G \leq \mathfrak{a}$.  
Note  that, since $\o K$ is an irreducible $G/N$-module, $|\o K | \neq 2$ implies
$|G/N| \neq 2,4$. 

If $|G/N| = 6$, then $|\o K| = 2^2$ and, by the first inequality in~(\ref{eq1}), $|X_0| \leq 3 |Z_{\lambda}| = \frac{3}{4}|N|$
(as $[N:Z_{\lambda}| = 4$). So $\P G \geq 1 - \frac{3}{4[G:N]} > \mathfrak{a}$, a contradiction.
 
If $|G/N| = 5$, then every nontrivial irreducible $G/N$-module over the field of order two  has order
$2^4$. So $|\o K| = 2^4$  and  $|\o N/\o Z| \neq 2^3$, as a trivial action of $G/N$ on $\o N/\o Z$ would
imply  $\o {Z_{\lambda}} \nor \o G$, thus $\o{X_0} = \o {Z_{\lambda}}$ and hence $\P G > \mathfrak{a}$.
Then by Lemma~\ref{pl2}  $|\o N /\o Z| = 2|\o K|$ and there exists a characteristic abelian subgroup $\o A$ of $\o N$ such that   $[\o N : \o A] = 2$.  
Since  $\P G \leq \mathfrak{a}$, 
part (2) of Lemma~\ref{sv} implies that all irreducible characters of $\o A$ are 
$\o N$-invariant and we get a contradiction  by Lemma~\ref{l5}. 

Therefore, we have proved that $|G/N| = 3$. Hence, $|\o K| = 2^2$ and by Lemma~\ref{pl2} we see that
$|\o N/ \o Z| = 2^3$.
Since  $G/N$ and $\o{N}/\o{Z}$ have coprime orders, $\o{N}/\o{Z}$ is a direct sum of indecomposable $G/N$-modules and,
as nontrivial $G/N$-submodules of $\o{N}/\o{Z}$ are homocyclic of rank two,
we deduce that there exists a normal subgroup $A$ of $G$, with $D \leq A$,  such that $\o Z \leq \o A \leq \o N$  and $[\o{N}: \o{A}]= 2$.
If $\o{A}$ is nonabelian, then
$\mathrm{cd}(\o A) = \{1,2\}$ and Lemma~\ref{pl2} applied to $\o A$  yields $|\o A'| = 2$, which is a contradiction because
$\o A' \nor \o G$ and $\o A'$ is contained in the  minimal normal subgroup $\o K$ of  $\o G$.
Hence $\o A$ is abelian and by Lemma~\ref{l5} there exists a character $\alpha \in\irr{\o A}$ such that $\alpha$ is not $\o N$-invariant.
So, $I_{\o N}(\alpha) = \o A$ and by part (2) of Lemma~\ref{sv} $\o{N}- \o{A} \subseteq \V{\o G}$. Hence, $\N G \subseteq A$,
$A \nor G$ and $[N:A] = 2$. As mentioned at the beginning of the proof, now it also follows  that $A$ is abelian. 
\end{proof}

The following result strengthens, under the special assumption $\P{G}<\mathfrak{a}$, the statement of Theorem~D of~\cite{isaacs1999}. 

\begin{pro}\label{NiF}
  Let $G$ be a  group such that $\P{G}<\mathfrak{a}$. Then $\N G$ is contained in an abelian normal subgroup of $G$.
\end{pro}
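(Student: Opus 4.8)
The plan is to induct on $|G|$, with Proposition~\ref{GN<4} doing the heavy lifting at the very end. By Theorem~\ref{mtt}, $G$ is solvable; if $G$ is nilpotent then $\N G = \z G$ by Lemma~\ref{vP} and there is nothing to prove, so I may assume $G$ is not nilpotent, hence nonabelian. Then $G$ has a minimal normal subgroup $N$, which is an elementary abelian $p$-group, and $N < G$ because $G$ is nonabelian (otherwise $G$ would be characteristically simple and solvable, hence abelian). Since $\P{G/N} \le \P G < \mathfrak{a}$, the inductive hypothesis applied to $G/N$ provides an abelian normal subgroup $\o A$ of $G/N$ with $\N{G/N} \subseteq \o A$; let $A$ be its preimage in $G$. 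Then $A \nor G$, $N \le A$ and $A/N$ is abelian, and moreover $\N G \subseteq A$: indeed, if $g \in \N G$ then $gN \notin \V{G/N}$, since inflating a vanishing irreducible character of $G/N$ would exhibit $g$ as a vanishing element of $G$ (the basic observation in Section~2), so $gN \in \o A$, i.e. $g \in A$.

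Next I would push $\N G$ into a nilpotent normal subgroup. If $A$ is abelian we are done, so assume it is not. As $N$ is an abelian normal subgroup of $A$, we have $N \le \fitt A$, hence $A/\fitt A$ is a quotient of the abelian group $A/N$ and is therefore abelian. Part~(1) of Lemma~\ref{sv}, applied to the normal subgroup $A$ of the solvable group $G$, then yields $A - \fitt A \subseteq \V G$, and combined with $\N G \subseteq A$ this gives $\N G \subseteq F$, where $F := \fitt A$. Now $F$ is characteristic in $A$, hence a nilpotent normal subgroup of $G$, and $F \le \fitt G < G$ because $G$ is not nilpotent.

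To finish: if $F$ is abelian we are done. Otherwise $F$ is a proper, nonabelian, nilpotent normal subgroup of $G$ with $\N G \subseteq F$ and $\P G \le \mathfrak{a}$, so Proposition~\ref{GN<4} furnishes an abelian normal subgroup $A^{*}$ of $G$ with $\N G \subseteq A^{*}$, and the induction is complete. I do not expect any genuine obstacle in this argument: the two nontrivial ingredients are Lemma~\ref{sv}(1) — whose hypothesis is met precisely because $A/N$, and hence $A/\fitt A$, is abelian — and Proposition~\ref{GN<4}, where the substantive case analysis has already been carried out; everything in between is routine manipulation of quotients and Fitting subgroups.
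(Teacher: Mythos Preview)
Your proof is correct and follows essentially the same strategy as the paper: induct, use Lemma~\ref{sv}(1) to push $\N G$ into a nilpotent normal subgroup, and finish with Proposition~\ref{GN<4}. The only cosmetic difference is that the paper inducts on $G/\fitt G$ rather than on $G/N$ for a minimal normal subgroup $N$; this lets them identify $\fitt K = \fitt G$ immediately, whereas you work with $\fitt A$ and then note $\fitt A \le \fitt G < G$, but the logic is the same.
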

\begin{proof}
  Clearly, we can assume $G \neq 1$.   By Theorem~\ref{mtt}, $G$ is solvable and hence $F = \fitt G \neq 1$.
  Write $\o G = G/F$. Since $\P{\o G}<\mathfrak{a}$, working by induction on $|G|$ we know that there exists a normal abelian
  subgroup $\o K$ of $\o G$ such that $\N{\o G} \subseteq \o K$.
  Since $\o{\N G} \subseteq \N{\o G}$ by the usual lifting argument,  then
  $\N G \subseteq K \nor G$, where $K$ is the preimage of $\o K$ under the canonical homomorphism.
  Since $\fitt K = F$ and $K/F$ is abelian, part (1) of Lemma~\ref{sv} yields that $\N G \subseteq F$. 
  We can hence assume that $F$ is a proper subgroup of $G$, or we are done by Lemma~\ref{vP}.
  Now, if $F$ is abelian there is nothing to prove, while if $F$ is nonabelian we conclude by an application of
  Proposition~\ref{GN<4}. 
\end{proof}

In general,  the set $\N G$ of the nonvanishing elements of $G$ is not a subgroup of $G$ (see, for instance, Lemma~\ref{m5}).
However, as we are about to prove,  this  turns out to be the case  if $\P G < \mathfrak{a}$.

\begin{thm}\label{KeyThm}
  Let $G$ be a finite group such that $\P G < \mathfrak{a}$. Then $\N G$ is an abelian normal subgroup of $G$. 
\end{thm}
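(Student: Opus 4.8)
The plan is to argue by induction on $|G|$, working throughout with an abelian normal subgroup of $G$ that contains $\N G$. By Theorem~\ref{mtt} $G$ is solvable, and by Proposition~\ref{NiF} there is an abelian normal subgroup $A$ with $\N G\subseteq A$; since $G-A\subseteq\V G$, the bound $\P G<\mathfrak a$ forces $m:=[G:A]\le 6$. Moreover $\V G$ is a union of conjugacy classes and is inverse-closed (as $\chi(g^{-1})=\overline{\chi(g)}$), so $\N G$ is $G$-invariant and inverse-closed; it therefore suffices to prove that $\N G$ is closed under multiplication, which, since $\N G\subseteq A$ is abelian, makes $\N G$ an abelian normal subgroup. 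We may assume $m\ge 2$. The basic tool is that $\alpha^G(a)$, for $a\in A$, is a sum of $m\le 6$ roots of unity, so by Lemma~\ref{vs} a vanishing such sum splits into minimal vanishing subsums whose lengths are primes summing to $m$; as every $m\le 6$ is a sum of $2$'s and $3$'s only, the "active" primes lie in $\{2,3\}$, and together with Lemma~\ref{dpnv} this lets us discard prime-to-$6$ parts of $A$.

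The prime-power indices $m\in\{2,3,4\}$ are straightforward. For $m\in\{2,4\}$ the Sylow $3$-subgroup of $G$ is that of $A$, hence abelian, so Proposition~\ref{56connect} (its part~(2) when $m=3$) gives $a\in\V G\iff a_p\in\V G$ for all $a\in A$, where $p$ is the prime with $m$ a power of $p$. As $AQ=G$ for $Q\in\syl pG$ and $a_p\in A_p\le Q$, we have $G=\cent G{a_p}Q$, so $a_p\in\V G\iff a_p\in\V Q$ by Lemma~\ref{nonvanishingsmallgroup}, and $\N Q=\z Q$ by Lemma~\ref{vP}. Hence $\N G\cap A_p=\z Q\cap A$ and $\N G=(\z Q\cap A)\times A_{p'}$, a product of the normal subgroups $\z Q\cap A$ ($G$-invariant, being an intersection of $G$-invariant subgroups) and $A_{p'}$ (characteristic in $A$), hence an abelian normal subgroup. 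For $m=5$ one reduces, via Lemma~\ref{ca}, to the case where $G$ is, modulo a suitable central subgroup, a Frobenius group with abelian kernel and complement of order $5$; since only the primes $2,3$ can be active, $\N G$ is governed by the $\{2,3\}$-part of the kernel, and either at most one of $2,3$ divides its order — so $\N G$ is the kernel, as in Remark~\ref{examples} — or both do and one is in the configuration of Lemma~\ref{m5}, giving $\P G\ge\frac{133}{135}>\mathfrak a$, a contradiction.

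The remaining case $m=6$ is the core. A vanishing sum of six roots of unity splits as $2+2+2$ or as $3+3$, and analysing the two patterns (the ratio inside a length-$2$ subsum is $-1$, inside a length-$3$ one a primitive cube root) one finds that for $a\in A$ one has $a\in\V G$ if and only if $a_2\in\V G$ or $a_3\in\V G$; consequently $\N G=(\N G\cap A_2)\times(\N G\cap A_3)\times A_{\{2,3\}'}$, and it suffices that $\N G\cap A_2$ and $\N G\cap A_3$ be subgroups. Fix $p\in\{2,3\}$. If $A_{p'}\ne 1$, Lemma~\ref{dpnv} identifies $\N G\cap A_p$ with $\N{\overline G}\cap\overline A$ for the proper quotient $\overline G=G/A_{p'}$, and since $\P{\overline G}\le\P G<\mathfrak a$ the inductive hypothesis — giving that $\N{\overline G}$ is a subgroup — finishes this. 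So we may assume $A=A_p$ is a $p$-group of index $6$.

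For $p=2$ this is precisely the situation prepared in Section~4. If $Q\in\syl 2G$ is abelian then $Q\le\cent G a$ for every $a\in A$ (as $A\le Q$), whence $[G:\cent G a]\in\{1,3\}$; in the index-$3$ case $\alpha^G(a)$ is twice a three-term sum of $2$-power roots of unity, which cannot vanish by part~(a) of Lemma~\ref{msu}, so $A\subseteq\N G$ and $\N G=A$. If $Q$ is nonabelian, part~(4) of Lemma~\ref{5/6} gives $\cent AP\le\z G$ for $P\in\syl 3G$ (when $P$ acts nontrivially on $A$; the trivial-action subcase is handled directly through Lemmas~\ref{nonvanishingsmallgroup} and~\ref{brough}), so after passing to $G/\cent AP$ we have $\cent AP=1$ and Proposition~\ref{AsubsetNG} gives $\N G=A$. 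For $p=3$ the Sylow $2$-subgroup of $G$ is $C_2$, and a centralizer analysis together with Lemmas~\ref{brough} and~\ref{nonvanishingsmallgroup} (plus a splitting $G=(A\rtimes C_2)\times C_3$, handled by induction, when $C_3$ is central) shows that here too $\N G$ is an abelian normal subgroup. Thus $\N G$ is an abelian normal subgroup of $G$ in every case. The hard part is the $m=6$, nonabelian-$Q$ subcase: it is exactly the delicate vanishing-sum lemmas of Section~3 (Lemmas~\ref{6sumsof2thunity}, \ref{XYtypeelementary}, \ref{XYtypeclassification}, \ref{Ytypevanishelement}), feeding into Proposition~\ref{AsubsetNG}, that make the reduction to $\cent AP=1$ and the final conclusion possible.
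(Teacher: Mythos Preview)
Your overall architecture matches the paper's closely for $m\le 5$, but the $m=6$ case contains a genuine gap. You assert that a vanishing sum of six roots of unity ``splits as $2+2+2$ or as $3+3$'', and deduce that $a\in\V G \iff a_2\in\V G$ or $a_3\in\V G$. This is not correct: part~(b3) of Lemma~\ref{msu} gives a third pattern, namely $\delta\zeta_5,\delta\zeta_5^2,\delta\zeta_5^3,\delta\zeta_5^4,-\delta\zeta_3,-\delta\zeta_3^2$, which is an \emph{irreducible} vanishing sum (no proper vanishing subsum). For this pattern one checks that the $2$-parts sum to $2\delta_2\neq 0$ and the $3$-parts sum to $3\delta_3\neq 0$, so your biconditional fails; there is no reason, from the character $\alpha$ alone, why $a_2$ or $a_3$ should be vanishing. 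Since your decomposition $\N G=(\N G\cap A_2)\times(\N G\cap A_3)\times A_{\{2,3\}'}$ rests on this biconditional, the subsequent inductive reduction to $A=A_p$ is not justified, and the sketchy $p=3$ endgame (which you leave to ``a centralizer analysis'') inherits the problem.

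The paper closes this gap \emph{before} invoking Proposition~\ref{56connect}: it first proves that the Sylow $3$-subgroup $P$ is abelian. Passing to $\overline G=G/M$ with $M$ the $3$-complement of $A$, one has $[\overline G:\overline P]=2$; if $\overline P\cong P$ were nonabelian then either $\overline G$ has $P$ as a quotient (forcing $\P G\ge 8/9$) or $\overline P=\fitt{\overline G}$, and then Proposition~\ref{NiF} together with Proposition~\ref{GN<4} forces $[\overline G:\overline P]=3$, a contradiction. With $P$ abelian, the proof of Proposition~\ref{56connect} shows that patterns (b2) and (b3) are excluded (via the bound $|X_3|\le|a_3^G|\le 2$), so only the $2+2+2$ split survives and one gets the clean statement $a\in\V G\iff a_2\in\V G$. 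This reduces the problem to $A$ a $2$-group, eliminating your $p=3$ subcase entirely; the rest of your $p=2$ analysis (Lemma~\ref{5/6}(4), reduction modulo $\cent AP$, Proposition~\ref{AsubsetNG}) then matches the paper.
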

\begin{proof}
  By Proposition~\ref{NiF}, $A = \la \N G \ra$ is an abelian normal subgroup of $G$. We will show that $\N G$ is 
  a subgroup of $G$, so in fact $\N G = A$. In the following, we denote by $A_p$ the Sylow $p$-subgroup of $A$, for a prime $p$.
  
 Since $G - A \subseteq \V G$ and $\P G < \mathfrak{a} < \frac{6}{7}$, we have $|G/A| \leq 6$.
If $G$ is abelian, then  there is nothing to prove as $\N G = G$.
So we can assume $G$ nonabelian and $[G:A] \geq 2$. 

If $[G:A] \in\{2,4\}$, then $G$ has abelian Sylow $3$-subgroups and by part (1) of Proposition~\ref{56connect}
$\N G = \N G \cap A = (\N G \cap A_2) \times K$, where $K$ is the normal $2$-complement of $G$.
Let $\o G = G/K$.
By Lemma~\ref{dpnv}, $\o{\N G \cap A_2} =  \N{\o G} \cap \o A$. Since by Lemma~\ref{vP}  $\N{\o G} = \z{\o G}$, 
it follows that $\N G$ is the preimage of $\z{\o G} \cap \o A$ via the canonical projection,  and hence $\N G$ is a subgroup of $G$.

If $[G:A] = 3$, then we use the same argument of the previous paragraph, applied to the factor group over the normal $3$-complement of $G$,  and part (2) of Proposition~\ref{56connect}. 

Assume now that $[G:A] = 5$ and let $R$ a Sylow $5$-subgroup of $G$, $L$ the normal $5$-complement of $G$ and $Z = \z G$.
Observe that $Z <  A$, because $G$ is nonabelian.
We also remark that  $R \cong G/L$ is abelian, since otherwise by Lemma~\ref{vP} we have $\P{G/L} \geq 1-\frac{1}{5^2}$, which
is impossible as $\P{G/L} \leq \P G < \mathfrak{a}$. So, $R \cap A \leq Z$.
By part (2) of Lemma~\ref{ca}, we have $L = [L, R] \times \cent LR$ and hence $A = [L,R] \times Z$.
We claim  that $[L,R] \subseteq \N G$.
First, we argue that $|[L,R]|$ is coprime to either $2$ or $3$.
In fact, since by part (1) of Lemma~\ref{ca} (applied to the action of $R/R \cap A$ on $L$) $G/Z$ is a Frobenius group with kernel $A/Z \cong [L,R]$, if $6$ divides $|[L,R]|$, then $G$ has a factor group satisfying the assumptions of Lemma~\ref{m5}, which is impossible because of the condition on the proportion of the vanishing elements. We also observe that $|[L,R]|$ is coprime to $5$, since $R \cap A \leq Z$. 
Let now $a \in [L,R]$ and $\alpha \in \irr A$. Then $\alpha^G(a) = \sum_{i=1}^5 \alpha^{x_i}(a)$, where $\{x_1, \ldots, x_5\}$
is a transversal for $A$ in $G$ , and for every $i \in \{1, \ldots, 5\}$ $\alpha^{x_i}(a) \in \mathsf{U}_m$, where $m = |[L,R]|$.
So, Lemma~\ref{vs} implies that $\alpha^G(a) \neq 0$. By Lemma~\ref{ind}, we deduce that $[L,R] \subseteq \N G$ and
by Lemma~\ref{nonvancent} we conclude that $A = \N G$.

Assume, finally, that $[G:A] = 6$. Let $Q$ be a Sylow $2$-subgroup of $G$ and  $P$  a Sylow $3$-subgroup of $G$.
We argue  that $P$ is abelian. In fact, assuming that  $P$ is nonabelian, we consider the factor group $\o G = G/M$, where
$M$ is the $3$-complement of $A$, and we observe that $\o G = \o P \rtimes \o Q$, where $\o P \cong P$ and $|\o Q| = 2$.
If $[\o Q , \o P] = 1$, then $G$ has a factor group isomorphic to $P$ and by Lemma~\ref{vP} we get the contradiction
$\P G \geq \frac{8}{9}$. Thus, $\o P = \fitt{\o G}$ and hence $\N{\o G} \subseteq \o P$ by Proposition~\ref{NiF}.
Since $\P{\o G} < \mathfrak{a}$,  we get a contradition by Proposition~\ref{GN<4}.
Hence, $P$ is abelian.

Denote by $K$ the $2$-complement of $A$ and let $\o G = G/K$. 
Since $\N G \subseteq A$, part (1) of Proposition~\ref{56connect} yields  $\N G =  \N G \cap A  = (\N G \cap A_2) \times K$,
and by Lemma~\ref{dpnv} we deduce that $ \o{\N G \cap A} = \o{\N G \cap A_2} = \N{\o G} \cap \o A$.
So, $\N G$ is the preimage of $ \N{\o G} \cap \o A$ via the canonical projection of $G$ on $\o G$, and we have  to show that
$\N{\o G} \cap \o A$ is a subgroup of $\o G$. 
If the Sylow $2$-subgroup $\o Q$ of $\o G$ is abelian, then $\N{\o G} \cap \o A = \o A$ by Lemma~\ref{brough}.   
Hence, we can assume that $\o Q$ is nonabelian.
If $\o P$ acts trivially on $\o A$, then $\o P \nor \o G$ and  by Lemma~\ref{dpnv} and Lemma~\ref{vP}
$(\N{\o G} \cap \o A) \times \o P$ is the preimage, via the canonical projection of $\o G$ onto $\o G /\o P$,  of
the group $\z{\o G/ \o P} \cap \o{A}\o{P}/\o{P}$, and again  it follows that $\N{\o G} \cap \o A$ is a subgroup of $\o G$.
We can hence  assume that $\o P$ acts nontrivially on $\o A$. 
Thus, by part (2) of Lemma~\ref{ca} $\o A = [\o A, \o P] \times \o Z$, where $\o Z = \cent{\o A}{\o P} \leq \z{\o G}$  by part (4) of Lemma~\ref{5/6}.
By Lemma~\ref{nonvancent}, $\N{\o G} \cap \o A = (\N{\o G} \cap [\o A, \o P]) \times \o Z$ and
by Lemma~\ref{dpnv} $(\N{\o G} \cap [\o A, \o P]) \o Z/ \o Z = \N{\o G / \o Z} \cap \o A/ \o Z$.
Therefore, $\N{ \o G} \cap \o A$ is the preimage of $\N{\o G/ \o Z } \cap \o A / \o Z$ via the canonical projection of $\o G$ on $\o G/ \o Z$, and we are left  to show that $\N{\o G/ \o Z } \cap \o A / \o Z$ is a subgroup of $\o G / \o Z$. 
If $\o Q / \o Z$ is abelian, then $\N{\o G / \o Z } \cap \o A/ \o Z = \o A / \o Z$ by Lemma~\ref{brough}.   
Hence, we can assume that $\o Q / \o Z$ is nonabelian. 
Since  $\cent {\o A /\o Z}{\o P\o Z/ \o Z} = 1$ by part (1) of Lemma~\ref{ca} and $\P{\o G / \o Z} \leq \P G < \mathfrak{a}$,
 an  application of  Proposition~\ref{AsubsetNG} to the group $\o G / \o Z$  yields $\N{\o G/ \o Z} = \o A / \o Z$,
completing the proof. 
\end{proof}

Now Theorem~A, which we state again, follows easily. 

\begin{thmA}\label{main}
 Let $G$ be a group such that  $\P{G}<\mathfrak{a}$. Then $\P{G}=\frac{m-1}{m}$ for some integer  $1\leq m\leq 6$.
\end{thmA}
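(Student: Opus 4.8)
The plan is to deduce Theorem~A directly from Theorem~\ref{KeyThm}. By that theorem, the hypothesis $\P G < \mathfrak a$ guarantees that $N = \N G$ is an abelian normal subgroup of $G$. The strategy is then to pass to the quotient $\o G = G/N$ and exploit that every element outside $N$ vanishes: since $G - N \subseteq \V G$, we get $\P G = 1 - |N|/|G| = 1 - 1/[G:N]$, so it suffices to show $m := [G:N] \le 6$.

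First I would observe that $[G:N] \le 6$ is immediate: if $[G:N] \ge 7$ then $\P G \ge 6/7 > \mathfrak a$, contradicting the hypothesis. So $\P G = \frac{m-1}{m}$ with $m = [G:N] \in \{1,2,3,4,5,6\}$, which is exactly the claimed conclusion. The only subtlety worth a sentence is the boundary: I should confirm that $N = \N G$ is a proper subgroup exactly when $G$ is nonabelian (so $m \ge 2$ in that case), while $m = 1$ corresponds to the abelian case $\P G = 0$, consistent with Burnside's theorem as recalled in the introduction. Both possibilities genuinely occur and lie in the stated range.

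There is essentially no obstacle here: all the real work has already been done in establishing Theorem~\ref{KeyThm}, namely that $\N G$ is a subgroup at all. Once that structural fact is in hand, the value of $\P G$ is completely determined by the single integer $[G:N]$, and the bound $[G:N] \le 6$ is forced by the numerical constraint $\P G < \mathfrak a \simeq 0.846 < 6/7$. So the proof of Theorem~A is a two-line consequence, which is precisely why the paper phrases Theorem~\ref{KeyThm} as the substantive result and remarks that it "easily yields Theorem~A".

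\begin{proof}
If $G$ is abelian, then $\N G = G$ and $\P G = 0 = \frac{1-1}{1}$, so the statement holds with $m = 1$. Assume then that $G$ is nonabelian. By Theorem~\ref{KeyThm}, $N = \N G$ is an abelian normal subgroup of $G$, and $N \neq G$ since $G$ is nonabelian. Because $\V G = G - N$, we have
\[
\P G = \frac{|G| - |N|}{|G|} = 1 - \frac{1}{[G:N]} = \frac{m-1}{m},
\]
where $m = [G:N] \geq 2$. Finally, if $m \geq 7$ then $\P G = \frac{m-1}{m} \geq \frac{6}{7} > \mathfrak a$, contradicting the hypothesis $\P G < \mathfrak a$. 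Hence $2 \leq m \leq 6$, and the proof is complete.
\end{proof}
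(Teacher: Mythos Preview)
Your proof is correct and follows essentially the same approach as the paper's: both invoke Theorem~\ref{KeyThm} to conclude that $\N G$ is a subgroup, compute $\P G = \frac{m-1}{m}$ for $m = [G:\N G]$, and bound $m \le 6$ via $\P G < \mathfrak{a} < \frac{6}{7}$. Your version adds a little more detail (treating the abelian case separately), but the argument is the same.
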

\begin{proof}
  By Theorem~\ref{KeyThm} we know that $A = \N G$ is a subgroup of $G$. Writing $m = [G:A]$, we have
  $\P G = 1 -\frac{1}{m} = \frac{m-1}{m}$ and, since $\P{G}<\mathfrak{a} <\frac{6}{7}$, clearly $1\leq m\leq 6$.
\end{proof}


\end{document}